\newtheorem{Thm}{Theorem}
\newtheorem{Prop}[Thm]{Proposition}
\newtheorem{Lemma}[Thm]{Lemma}
\theoremstyle{definition}
\newtheorem{Def}[Thm]{Definition}
\theoremstyle{remark}
\title[Grading on $\mathrm{UT}_n^{(-)}$ revisited]{Gradings on the algebra of triangular matrices as a Lie algebra: revisited}
\author{Plamen Koshlukov}
\address{Department of Mathematics, State University of Campinas, Campinas, Brazil}
\thanks{P. Koshlukov was partially supported by FAPESP grant No.~2018/23690-6 and by CNPq grant No.~302238/2019-0}
\email{plamen@unicamp.br}
\author{Felipe Yukihide Yasumura}
\address{Instituto de Matem\'atica e Estat\'istica, Universidade de S\~ao Paulo, S\~ao Paulo, Brazil}
\email{fyyasumura@ime.usp.br}
\begin{document}
\begin{abstract}
We investigate the group gradings on the algebra of upper triangular matrices over an arbitrary field, viewed as a Lie algebra. These results were obtained a few years early by the same authors. We provide streamlined proofs, and present a complete classification of isomorphism classes of the gradings. We also provide a classification of the practical isomorphism classes of the gradings, which is a better alternative way to consider these gradings up to being essentially the same object. Finally, we investigate in details the case where the characteristic of the base field is $2$, a topic that was neglected in previous works.
\end{abstract}
\maketitle
\noindent
\textbf{Keywords:} Graded algebras, Lie algebra, Upper triangular matrices, Classification of group gradings

\noindent
\textbf{2020 AMS MSC:} Primary: 17B70, 16W50; Secondary: 17B01, 17B30, 16R99

\section{Introduction}
The classification of all possible group gradings on a given algebra is a very important and interesting problem. One can trace its origins back to the paper by Wall \cite{wall} where he classified all finite dimensional associative algebras which are simple as $C_2$-graded algebras. Here and in what follows, $C_2$ stands for the cyclic group of order 2. Later on Patera and Zassenhaus \cite{PZ} started a systematic study on the subject with a special focus on simple Lie algebras. An essentially complete classification of all group gradings on simple Lie algebras over algebraically closed fields is known as a result of the efforts of many authors, see \cite{EK2013} for the state-of-art. There were many advances in the context of other simple algebras as well. We refer to the monograph \cite[Chapter 2]{EK2013} for the classification of the gradings on associative matrix algebras and for the corresponding bibliographic references. It should be noted that there have appeared studies concerning the classification of group gradings in the context of non-simple algebras, see for instance \cite{Bahturin}. Group gradings on non-simple algebras have compelling connections to PI-algebras as well. We recall that block-triangular matrices play an important role in PI-theory, see, for instance, \cite{GZbook}. Their related graded PI-properties have been a subject of several recent studies, see \cite{CM2015,DM2016,DS2014,RD2019,DPS2020,DSS2019,DP2020}, to cite only a few examples. The group gradings on the upper block-triangular matrices were computed in \cite{VZ2012,BFD2018,y2018}. Here we recall the sophisticated theory developed by A. Kemer in the eighties. It connects varieties of associative algebras over a field of characteristic 0, to the Grassmann envelopes of finite-dimensional $C_2$-graded algebras (see, for instance, \cite{Kemerbook,AKBK}). It is also worth mentioning a series of other works relating gradings and PI-properties. We cite here the papers \cite{kz, aljadeff_darell, shz, by}, and the references therein.

The algebra of upper triangular matrices over an arbitrary field can be viewed as a Lie algebra. It is a special case of block-triangular matrices, and it will be the main object of the present paper. In the associative setting, the papers  \cite{VZ2007,DKV2004} completed the classification of isomorphism classes of groups gradings on the upper triangular matrix algebras. In the non-associative setting, group gradings on the same algebra viewed as a Lie algebra were obtained in \cite{KY2017}. As a Jordan algebra, the classification of group gradings for $n=2$ was done in \cite{KM2012}, and then, generalized for arbitrary $n$ in \cite{KY2017a}. These latter papers concerning the non-associative setting excluded the possibility of characteristic 2 for the base field.

Meanwhile, several authors were interested in the study of (graded) PI-related properties of the algebra of $2\times2$ upper triangular matrices viewed as a Jordan algebra \cite{CenM2017,CenMS2019,CM2014}. Also, recent works were dedicated to investigating the same algebra over finite fields \cite{DimasMateus}, and over fields of characteristic 2 \cite{ManuPedro}. All this leads us to think that it would be interesting to obtain a complete classification of group gradings on the algebra of $n\times n$ upper triangular matrices considered as a Lie algebra over arbitrary fields (including characteristic 2). This is our main goal in this paper. As a by-product, we obtain shorter and streamlined proofs of the classification of the isomorphism classes on the same algebra over any field, thus recovering the results of \cite{KY2017}.

Let $\mathrm{UT}^{(-)}_n=\mathrm{UT}_n^{(-)}(\mathbb{F})$ denote the vector space of all upper triangular matrices of order $n$ with entries in an arbitrary field $\mathbb{F}$, endowed with the bracket $[a,b]=ab-ba$. This binary operation evidently makes $\mathrm{UT}^{(-)}_n$ a Lie algebra. The purpose of this paper is to obtain the classification of isomorphism classes of group gradings on it. We also shall suggest an alternative approach to the subject, namely, we classify the so-called \textsl{practical isomorphism classes} of group gradings.

\section{Preliminaries}
\subsection{Graded algebras}
Let $\mathcal{A}$ be an arbitrary algebra over a field $\mathbb{F}$, and let $G$ be any group. We use multiplicative notation for $G$, and denote its identity by $1$. We say that $\mathcal{A}$ is \textsl{$G$-graded} if $\mathcal{A}$ is endowed with a fixed vector space decomposition,
\[
\Gamma:\mathcal{A}=\bigoplus_{g\in G}\mathcal{A}_g,
\]
where some of the $\mathcal{A}_g$ may be $0$, and such that $\mathcal{A}_g\mathcal{A}_h\subseteq \mathcal{A}_{gh}$, for all $g$, $h\in G$. The subspace $\mathcal{A}_g$ is called the \textsl{homogeneous component of degree $g$}, and the non-zero elements $x\in\mathcal{A}_g$ are said to be homogeneous of degree $g$. We write $\deg x=g$ for these elements. The \textsl{support} of $\mathcal{A}$ (or of $\Gamma$) is the set $\mathrm{Supp}\,\mathcal{A}=\{g\in G\mid\mathcal{A}_g\ne 0\}$.

A subspace $\mathcal{I}\subset\mathcal{A}$ is called \textsl{graded} (or homogeneous) if $\mathcal{I}=\bigoplus_{g\in G}\mathcal{I}\cap\mathcal{A}_g$. If $\mathcal{I}$ is a \textsl{graded ideal} (that is, it is simultaneously an ideal and a graded subspace), then the quotient algebra $\mathcal{A}/\mathcal{I}$ inherits in a natural way the $G$-grading.

If $\mathcal{A}$ is an associative or a Lie algebra, then a \textsl{graded $\mathcal{A}$-module} is an $\mathcal{A}$-module $\mathcal{V}$ with a fixed vector space decomposition $\mathcal{V}=\bigoplus_{g\in G}\mathcal{V}_g$ such that $\mathcal{A}_g\mathcal{V}_h\subseteq\mathcal{V}_{gh}$, for all $g$, $h\in G$.

Let $\mathcal{B}=\bigoplus_{g\in G}\mathcal{B}_g$ be another $G$-graded algebra. A map $f:\mathcal{A}\to\mathcal{B}$ is called a \textsl{homomorphism of $G$-graded algebras} if $f$ is a homomorphism of algebras and $f(\mathcal{A}_g)\subset\mathcal{B}_g$, for all $g\in G$. If, moreover, $f$ is an isomorphism, we call $f$ a \textsl{$G$-graded isomorphism} (or an isomorphism of graded algebras), and we say that $\mathcal{A}$ and $\mathcal{B}$ are \textsl{$G$-graded isomorphic} (or isomorphic as graded algebras). Two $G$-gradings, $\Gamma$ and $\Gamma'$, on the same algebra $\mathcal{A}$ are \textsl{isomorphic} if $(\mathcal{A},\Gamma)$ and $(\mathcal{A},\Gamma')$ are isomorphic as graded algebras.

Now, if $f:\mathcal{A}\to\mathcal{A}$ is an algebra automorphism, then $f$ induces a new $G$-grading on $\mathcal{A}$ via $\mathcal{A}=\bigoplus_{g\in G}\mathcal{A}'_g$, where $\mathcal{A}'_g=f(\mathcal{A}_g)$. Note that $f$ will be an isomorphism between these two gradings on $\mathcal{A}$.

\subsection{Upper triangular matrices}
Let $\mathbb{F}$ be an arbitrary field, and $\mathrm{UT}^{(-)}_n=\mathrm{UT}_n(\mathbb{F})^{(-)}$ be the vector space of the upper triangular matrices endowed with the bracket operation
\[
[a,b]=ab-ba.
\]
We use the convention that the bracket is left-normed, that is, we inductively define
\[
[a_1,a_2,\ldots,a_m]:=[[a_1,a_2,\ldots,a_{m-1}],a_m],\quad m>2.
\]
Note that $\mathrm{UT}^{(-)}_n$ is a Lie algebra. We denote by $e_{ij}$ the matrix units, that is, the matrix having entry $1$ in the $(i,j)$ entry and $0$ elsewhere. We use $\mathrm{I}$ to denote the identity matrix, so $\mathrm{I}=\sum_{i=1}^ne_{ii}$. The centre of the algebra is $\mathbb{F}\mathrm{I}$.

We give several important examples of group gradings on $\mathrm{UT}^{(-)}_n$. In fact, we shall prove later on that these four classes (indeed, essentially two classes) of examples exhaust the list of isomorphism classes on it.

\subsubsection{Elementary grading}
Let $\textbf{g}=(g_1,\ldots,g_n)\in G^n$ be a sequence, and for simplicity, suppose that the $g_i$ are pairwise commuting elements. Then we can define a $G$-grading on $\mathrm{UT}^{(-)}_n$ by $\mathrm{UT}^{(-)}_n=\bigoplus_{g\in G}\mathcal{U}_g$, where
\[
\mathcal{U}_g=\mathrm{Span}\{e_{ij}\mid g_ig_j^{-1}=g\}.
\]
It is an easy exercise to verify that this decomposition gives a $G$-grading on $\mathrm{UT}^{(-)}_n$. This kind of grading is called \textsl{elementary}. It is worth mentioning that some authors reserve the name \textsl{elementary} to refer to a grading that is isomorphic to an elementary one, as constructed above.

The following assertions are equivalent ways to define an elementary grading:
\begin{enumerate}
\item a $G$-grading on $\mathrm{UT}_n^{(-)}$ is elementary if and only if all matrix units are homogeneous in the grading,
\item a $G$-grading on $\mathrm{UT}_n^{(-)}$ is elementary if and only if all matrix units $e_{11}$, \dots, $e_{nn}$ are homogeneous in the grading. 
\end{enumerate}
Also, if we know that it is given an elementary grading on $\mathrm{UT}_n^{(-)}$, then the knowledge of the sequence
\[
\eta=(\deg e_{12},\deg e_{23},\ldots,\deg e_{n-1,n})
\]
gives enough information to recover all the grading. Indeed, if $i<j$ then each
\[
e_{ij}=[e_{i,i+1}, e_{i+1,i+2}, \ldots,e_{j-1,j}]
\]
is homogeneous, and its degree is the product of the degrees of $e_{i,i+1}$, \dots,  $e_{j-1,j}$. Moreover, since each $e_{ii}$ is homogeneous and $e_{i,i+1}=[e_{ii}, e_{i,i+1}]$, we obtain $\deg e_{ii}=1$. Thus, the sequence completely defines the elementary grading. Conversely, given a sequence of $\eta\in G^{n-1}$ of $n-1$ commuting elements, we may construct an elementary grading on $\mathrm{UT}_n^{(-)}$ by declaring $\deg e_{i,i+1}=g_i$, and then extending it, as above, to the remaining $e_{ij}$.

It will be convenient for us to say that such an elementary grading is defined by the sequence $\eta$, and we shall denote the grading by $\Gamma_1(\eta)$.

\subsubsection{Practical elementary grading\label{example}} The next construction is an elementary grading, but we ``change'' the degree of the identity matrix.

Let $\eta=(g_1,\ldots,g_{n-1})\in G^{n-1}$ be a sequence of pairwise commuting elements, and let $t\in G$. Let $\mathscr{D}\subseteq\mathrm{UT}_n^{(-)}$ be the set of diagonal matrices, and let $V$ be any subspace such that $\mathscr{D}=\mathbb{F}\mathrm{I}\oplus V$. Then, we obtain a $G$-grading on $\mathrm{UT}_n^{(-)}$ by declaring:
\begin{enumerate}
\item each $e_{i,i+1}$ is homogeneous of degree $g_i$,
\item the identity matrix $\mathrm{I}$ is homogeneous and $\deg\mathrm{I}=t$,
\item every non-zero element in $V$ is homogeneous of degree $1$.
\end{enumerate}
In this construction, similarly as mentioned above, each $e_{ij}$, for $i<j$, will be a homogeneous element of degree $g_ig_{i+1}\cdots g_{j-1}$. This decomposition works like an elementary grading, with the exception of the identity matrix. However, the identity annihilates any product, so its degree does not interfere with the grading. This grading will be denoted by the pair $\Gamma_1(t,\eta)$. Formally, we would need to consider the subspace $V$; however, as we will prove later, the use of a different subspace would produce an isomorphic graded algebra. So $\Gamma_1(t,\eta)$ denotes an isomorphism class of gradings on $\mathrm{UT}_n^{(-)}$.

Note that every elementary grading is a special case of a practical elementary grading. In this case, $\deg\mathrm{I}=1$. So $\Gamma_1(t,\eta)$ represents the elementary grading defined by $\eta$ if and only if $t=1$.

\subsubsection{Type 2 grading}
In this subsection, we suppose $\mathrm{char}\,\mathbb{F}\ne2$. Let $g\in G$ be of order 2, that is $g\ne g^2=1$, and let $\eta=(g_1,\ldots,g_{n-1})\in G^{n-1}$ be a symmetric sequence. This means that $g_i=g_{n-i+1}$, for each $i$. Assume further that $g$, $g_1$, \dots, $g_{n-1}$ commute pairwise. For every  $i$, $j\in\mathbb{N}$, with $1\le i\le j\le n$, let
\[
X_{ij}^\pm:=e_{ij}\pm e_{n-j+1,n-i+1}.
\]
\begin{Def}\label{type2grading}
Let $\mathrm{char}\,\mathbb{F}\ne2$, and let $g$ and $\eta$ be as above. A \textsl{Type 2} grading is a $G$-grading on $\mathrm{UT}_n^{(-)}$ where each $X_{ij}^\pm$ is homogeneous and
\begin{align*}
\deg X_{ii}^-=1,&\quad \deg X_{ii}^+=g,\\%
\deg X_{i,i+1}^-=g_i,&\quad \deg X_{i,i+1}^+=gg_i.
\end{align*}
We denote such a grading by $\Gamma_2(g,\eta)$.
\end{Def}

A particular case of a type 2 grading may be constructed from the canonical involution of $\mathrm{UT}_n$ in a natural way, as follows. Let $\tau\colon \mathrm{UT}_n\to\mathrm{UT}_n$ be such that $\tau(e_{ij})=e_{n-j+1,n-i+1}$, for all $1\le i\le j\le n$. Then $\tau$ is an involution, that is $\tau^2=1$, and $\tau(ab)=\tau(b)\tau(a)$, for all $a$, $b\in\mathrm{UT}_n$. In fact $\tau$ represents the reflection along the second diagonal, from top right to bottom left in the matrices; this is an involution on the upper triangular matrices. Since $\eta$ is chosen to be symmetric, then $\tau$ is a \textsl{graded involution} for $\Gamma_1(\eta)$, that is, $\deg \tau(x)=\deg x$, for each homogeneous $x\in(\mathrm{UT}_n,\Gamma_1(\eta))$. Since $\tau$ is an involution, we obtain a vector space decomposition
\[
\mathrm{UT}_n=\mathcal{K}\oplus\mathcal{H},
\]
where $\mathcal{K}$ is the set of the skew-symmetric elements of $\mathrm{UT}_n$ with respect to $\tau$, and $\mathcal{H}$ is the set of all symmetric elements. Note that such a decomposition provides a $C_2$-grading on $\mathrm{UT}_n^{(-)}$, where $C_2$ is the group of order $2$. Pay attention that it is not a $C_2$-grading on the associative algebra $\mathrm{UT}_n$. Since $\tau$ is a graded involution, both subspaces $\mathcal{K}$ and $\mathcal{H}$ are $G$-graded. So, considering the intersection of each homogeneous subspace defining the grading $\Gamma_1(\eta)$ with the spaces $\mathcal{K}$ and $\mathcal{H}$, we obtain a $(C_2\times G)$-grading on $\mathrm{UT}_n^{(-)}$. One can check that this construction is a particular case of a type 2 grading. However, given an element $g\in G$ of order $2$, the sequence
\[
1\to\langle g\rangle\to G\to G/\langle g\rangle\to1
\]
is not always split. Thus Definition \ref{type2grading} is slightly more general, even though it is more tedious to check its consistence for a grading.

Now let $\epsilon_1$, \dots, $\epsilon_q\in\mathbb{F}^\times$, where $q=\lfloor\frac{n-1}2\rfloor$ and $\mathbb{F}^\times$ is the multiplicative group of invertible elements in $\mathbb{F}$. Then we may construct a grading on $\mathrm{UT}_n^{(-)}$ by requiring the elements
\begin{equation}\label{type2grading2}
\begin{split}
X_{ii}^\pm,\quad i=1,2,\ldots,\left\lceil\frac{n-1}2\right\rceil,\\%
e_{i,i+1}\pm\epsilon_ie_{n-i,n-i+1},\quad i=1,\ldots,q,
\end{split}
\end{equation}
to be homogeneous. To prove that this grading is consistent, we shall prove that it is isomorphic to one as constructed in Definition \ref{type2grading}. So let $M$ be the invertible diagonal $n\times n$ matrix given by
\[
M=\mathrm{diag}(1,\ldots,1,\epsilon_1\cdots\epsilon_q,\epsilon_2\cdots\epsilon_q,\ldots,\epsilon_q,1).
\]
Note that
\[
MX_{i,i+1}^\pm M^{-1}=e_{i,i+1}\pm\epsilon_i e_{n-i,n-i+1},\quad i=1,2,\ldots,q.
\]
Thus \eqref{type2grading2} defines a grading, and it is isomorphic to a type 2 grading. Moreover, as another consequence, taking every $\epsilon_i$ to be either $1$ or $-1$, we see that $\Gamma_2(g,\eta)\cong\Gamma_2(g,\eta')$, whenever $\eta'=(g_1',\ldots,g_{n-1}')$ is such that some of the $g_i'$ differs from $g_i$ by $g$. More precisely, denote $\eta\sim_g\eta'$ if:
\begin{enumerate}
\item for each $1\le i\le\lfloor\frac{n-1}2\rfloor$, $g_i'$ equals either $g_i$ or $gg_i$,
\item if $n$ is even, then $g_{n/2}'=g_{n/2}$.
\end{enumerate}
Since $g^2=1$, then $\sim_g$ is an equivalence relation. Then the following isomorphism holds.
\begin{Lemma}\label{iso_type2_grading}
Let $g\in G$, with $g^2=1$, $g\ne 1$, and let $\eta$, $\eta\in G^{n-1}$ be symmetric sequences. If $\eta\sim_g\eta'$ then $\Gamma_2(g,\eta)\cong\Gamma_2(g,\eta')$.\qed
\end{Lemma}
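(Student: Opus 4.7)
The plan is to realize the isomorphism $\Gamma_2(g,\eta)\cong\Gamma_2(g,\eta')$ as an explicit inner automorphism, building directly on the computation carried out just before the lemma. Given $\eta\sim_g\eta'$, for each $i=1,\dots,q$ I set $\epsilon_i=1$ if $g_i'=g_i$ and $\epsilon_i=-1$ if $g_i'=gg_i$. Let $M$ be the diagonal matrix $\mathrm{diag}(1,\dots,1,\epsilon_1\cdots\epsilon_q,\dots,\epsilon_q,1)$ assembled from these $\epsilon_i$, and define $\varphi(x)=MxM^{-1}$. Since $M$ is invertible, $\varphi$ is an automorphism of the associative algebra $\mathrm{UT}_n$, hence of the Lie algebra $\mathrm{UT}_n^{(-)}$.

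Using the identity $MX_{i,i+1}^\pm M^{-1}=e_{i,i+1}\pm\epsilon_i e_{n-i,n-i+1}$ established just above, one sees that $\varphi(X_{i,i+1}^\pm)=X_{i,i+1}^\pm$ when $\epsilon_i=1$, while $\varphi(X_{i,i+1}^\pm)=X_{i,i+1}^\mp$ when $\epsilon_i=-1$. Because $M$ is diagonal, $\varphi$ also fixes every $X_{ii}^\pm$. Thus $\varphi$ permutes the set of near-diagonal homogeneous generators of $\Gamma_2(g,\eta)$ (up to the sign-swap $+\leftrightarrow-$) and leaves the diagonal ones untouched.

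It remains to check that these permutations preserve degree, now measured in $\Gamma_2(g,\eta')$. For the diagonal generators this is clear: both gradings assign $1$ to $X_{ii}^-$ and $g$ to $X_{ii}^+$, and $\varphi$ fixes these. For the near-diagonal ones, when $\epsilon_i=1$ we have $g_i'=g_i$ and the degrees match trivially; when $\epsilon_i=-1$ we have $g_i'=gg_i$, so using $g^2=1$ the $\Gamma_2(g,\eta')$-degree of $\varphi(X_{i,i+1}^-)=X_{i,i+1}^+$ is $gg_i'=g_i$, which equals the $\Gamma_2(g,\eta)$-degree of $X_{i,i+1}^-$; the argument for $X_{i,i+1}^+$ is symmetric. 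Since $\varphi$ is an algebra automorphism, degree preservation on these generators propagates via the bracket to every $X_{ij}^\pm$ with $i<j$, and the $X_{ij}^\pm$ together span $\mathrm{UT}_n^{(-)}$ (using $\mathrm{char}\,\mathbb{F}\neq 2$). So $\varphi$ is a $G$-graded isomorphism $(\mathrm{UT}_n^{(-)},\Gamma_2(g,\eta))\to(\mathrm{UT}_n^{(-)},\Gamma_2(g,\eta'))$.

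I do not anticipate a serious obstacle. All the creative work was done in the calculation of $MX_{i,i+1}^\pm M^{-1}$ preceding the lemma; what remains is the careful bookkeeping matching the sign choices $\epsilon_i$ against the ``twist by $g$'' encoded in $\sim_g$. The only subtle point worth flagging is that the construction uses only $\epsilon_1,\dots,\epsilon_q$ with $q=\lfloor(n-1)/2\rfloor$, which is precisely why the definition of $\sim_g$ must pin down $g_{n/2}'=g_{n/2}$ when $n$ is even: in that case the middle entry of $\eta$ has no free $\epsilon$ available to absorb a twist, and it must be held fixed by $\varphi$.
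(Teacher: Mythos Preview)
Your proposal is correct and follows exactly the approach the paper intends: the lemma is stated with a \qed precisely because the preceding computation of $MX_{i,i+1}^\pm M^{-1}$ already does all the work, and you have simply spelled out the bookkeeping of matching $\epsilon_i\in\{\pm1\}$ to the twist by $g$. One tiny imprecision: when $n$ is even, $\varphi$ does not literally fix $e_{n/2,n/2+1}$ but rescales it by $(\epsilon_1\cdots\epsilon_q)^{-1}$; since this is a nonzero scalar and $g_{n/2}'=g_{n/2}$, the degree is still preserved, so nothing breaks.
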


\subsubsection{Practical Type 2 grading}
The construction of a practical type 2 grading is similar to the passage from elementary to practical elementary one. It is a Type 2 grading where we change the degree of the identity matrix. Let $\mathrm{char}\,\mathbb{F}\ne2$ and $\Gamma_2(g,\eta)$ be a Type 2 grading, as above, and let $\mathrm{UT}_n^{(-)}=\bigoplus_{h\in G}\mathcal{U}_h$ be the respective decomposition. Let $t\in G$ be any element. The identity matrix $\mathrm{I}$ belongs to the homogeneous component of degree $g$, as in the construction of Type 2 grading. So, let $\mathcal{V}\subseteq\mathcal{U}_g$ be such that $\mathcal{U}_g=\mathcal{V}\oplus\mathrm{Span}\,\{\mathrm{I}\}$. Then we obtain a new $G$-grading on $\mathrm{UT}_n^{(-)}$ if we put $\mathcal{U}_h'=\mathcal{U}_h$, for each $h\in G\setminus\{g,t\}$. Also, we let $\mathcal{U}_t'=\mathcal{U}_t+\mathbb{F}\mathrm{I}$, and, if $g\ne t$, then $\mathcal{U}_g'=\mathcal{V}$. This grading will be denoted by $\Gamma_2(t,g,\eta)$, and we shall call it \textsl{practical type 2} grading. Clearly a practical type 2 grading $\Gamma_2(t,g,\eta)$ is the type 2 grading $\Gamma_2(g,\eta)$ if and only if $t=g$.

\subsection{Practical graded isomorphism}
\label{sect23}
We shall present some methods that appeared in \cite{KY2017}, and were later improved in \cite[Section 7]{KY2019}, which enable us to deal with gradings on non-simple Lie algebras.

Let $\mathcal{L}$ be a $G$-graded algebra and $\mathcal{M}$ a $G$-graded $\mathcal{L}$-module. Given a subset $S\subseteq\mathcal{M}$ and a graded subalgebra $\mathcal{H}\subseteq\mathcal{L}$, we denote
\[
\mathrm{C}_\mathcal{H}\,S=\{b\in\mathcal{H}\mid bm=0, \textrm{ for every } m\in S\},
\]
the centralizer of $S$ in $\mathcal{H}$.
Note that $\mathfrak{z}(\mathcal{L})=\mathrm{C}_\mathcal{L}\,\mathcal{L}$ where $\mathfrak{z}(\mathcal{L})$ stands for the centre of $\mathcal{L}$. The following result is well-known, and will be used repeatedly in what follows. We include its proof for completeness.
\begin{Prop}
If $\mathrm{Span}\,S$ is a graded subspace, then so is $\mathrm{C}_\mathcal{H}\,S$.
\end{Prop}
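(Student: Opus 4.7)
The plan is to exploit the grading on the module $\mathcal{M}$ to peel off the homogeneous components of any $b\in\mathrm{C}_\mathcal{H}\,S$ and check that each of them annihilates $S$.

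First I would reduce to the case where $S$ consists of homogeneous elements. Observe that by linearity, $\mathrm{C}_\mathcal{H}\,S=\mathrm{C}_\mathcal{H}(\mathrm{Span}\,S)$. Since $\mathrm{Span}\,S$ is by hypothesis a graded subspace of $\mathcal{M}$, it admits a basis $S'$ of homogeneous elements, and $\mathrm{C}_\mathcal{H}\,S=\mathrm{C}_\mathcal{H}\,S'$. So without loss of generality we may assume every element of $S$ is homogeneous.

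Next, take $b\in\mathrm{C}_\mathcal{H}\,S$ and decompose it as $b=\sum_{g\in G}b_g$ with $b_g\in\mathcal{H}_g$ (only finitely many nonzero). For an arbitrary homogeneous $m\in S$ of degree $h$ we have
\[
0=bm=\sum_{g\in G}b_g m,
\]
and each summand $b_g m$ lies in $\mathcal{M}_{gh}$. Because the sum $\mathcal{M}=\bigoplus_{g\in G}\mathcal{M}_{gh}$ is direct, every $b_g m$ must vanish. Since this holds for every homogeneous $m\in S$, each $b_g$ annihilates all of $S$, i.e., $b_g\in\mathrm{C}_\mathcal{H}\,S$.

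This shows $\mathrm{C}_\mathcal{H}\,S=\bigoplus_{g\in G}(\mathrm{C}_\mathcal{H}\,S\cap\mathcal{H}_g)$, which is the definition of a graded subspace of $\mathcal{H}$. There is no real obstacle; the only subtlety is the initial reduction to a homogeneous $S$, which is what turns the hypothesis ``$\mathrm{Span}\,S$ is graded'' into something usable in the degree argument above.
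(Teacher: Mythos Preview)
Your proof is correct and follows essentially the same argument as the paper: reduce to homogeneous elements of $\mathrm{Span}\,S$, decompose $b$ into homogeneous parts, and use the directness of the grading on $\mathcal{M}$ to conclude each $b_g$ annihilates $S$. The only cosmetic difference is that you perform the reduction to a homogeneous $S$ upfront via a basis $S'$, whereas the paper works directly with an arbitrary homogeneous $s\in\mathrm{Span}\,S$.
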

\begin{proof}
Let $x\in\mathrm{C}_\mathcal{H}\,S$. Then $x\in\mathcal{H}$, and since $\mathcal{H}$ is graded, we may write $x=\sum_{g\in G}x_g$, where each $x_g\in\mathcal{L}_g\cap\mathcal{H}$. Clearly $\mathrm{C}_\mathcal{H}\,S=\mathrm{C}_\mathcal{H}(\mathrm{Span}\,S)$, so let $s\in\mathrm{Span}\,S$ be a homogeneous element. Then
$$
0=xs=\sum_{g\in G}x_gs.
$$
Since each $x_gs$ is a homogeneous element, we necessarily have $x_gs=0$ for all $g\in G$. Since $s$ is an arbitrary homogeneous element and $\mathrm{Span}\,S$ is a graded subspace, then $x_g\in\mathrm{C}_\mathcal{H}(\mathrm{Span}\,S)$ for all $g\in G$. Thus, $\mathrm{C}_\mathcal{H}\,S$ is a graded subspace.
\end{proof}

As an immediate consequence, since $\mathfrak{z}\left(\mathrm{UT}_n^{(-)}\right)=\mathbb{F}\mathrm{I}$, we obtain that $\mathrm{I}$ is always a homogeneous element. Since $\mathrm{I}$ annihilates any product, it may have its degree changed without essentially modifying a grading on $\mathrm{UT}_n^{(-)}$. In order to control the centre of the algebra, we recall the following notion.

\begin{Def}[{\cite[Definition 7]{KY2017}}]\label{practical_iso}
Let $\mathcal{L}_1$ and $\mathcal{L}_2$ be $G$-graded algebras. $\mathcal{L}_1$ and $\mathcal{L}_2$ are said to be \textsl{practically $G$-graded isomorphic} if there exists an isomorphism of (ungraded) algebras $\psi:\mathcal{L}_1\to\mathcal{L}_2$ that induces a $G$-graded isomorphism $\mathcal{L}_1/\mathfrak{z}(\mathcal{L}_1)\to \mathcal{L}_2/\mathfrak{z}(\mathcal{L}_2)$.
\end{Def}

The definition says that a practical elementary grading $\Gamma_1(t,\eta)$ is practically graded isomorphic to the elementary grading $\Gamma_1(\eta)$. Conversely, a $G$-grading $\Gamma$ on $\mathrm{UT}_n^{(-)}$ which is practically $G$-graded isomorphic to an elementary grading defined by $\eta$ is necessarily the practical elementary grading $\Gamma'=\Gamma_1(\deg\mathrm{I},\eta)$. Indeed, the identity map $(\mathrm{UT}_n^{(-)},\Gamma)\to(\mathrm{UT}_n^{(-)},\Gamma')$ satisfies Definition \ref{practical_iso}. A similar discussion holds in the context of type 2 gradings if $n>2$. If $n=2$, then the practical elementary grading $\Gamma_1(t,\eta)$ is always practically graded isomorphic to a type 2 grading $\Gamma_2(g,\eta)$. They are graded isomorphic if and only if $t\ne t^2=1$, and $g=t$. We record this assertion for future reference.
\begin{Lemma}\label{fact}
Let $\Gamma$ be a $G$-grading on $\mathrm{UT}_n^{(-)}$.
\begin{enumerate}
\item $\Gamma$ is practically $G$-graded isomorphic to the elementary grading $\Gamma_1(\eta)$ if and only if it is $G$-graded isomorphic to the practical elementary grading $\Gamma_1(\deg\mathrm{I},\eta)$.
\item If $n>2$, then $\Gamma$ is practically $G$-graded isomorphic to a type 2 grading $\Gamma_2(g,\eta)$ if and only if it is $G$-graded isomorphic to the practical type 2 grading $\Gamma_2(\deg\mathrm{I},g,\eta)$.
\end{enumerate}
Additionally, if $n=2$ and $\mathrm{char}\,\mathbb{F}\ne2$, then the practical elementary grading $\Gamma_1(t,\eta)$ is practically graded isomorphic to a type 2 grading $\Gamma_2(g,\eta)$. Moreover, they are graded isomorphic if and only if $t=g$ (that is, one needs that $t\ne t^2=1$).\qed
\end{Lemma}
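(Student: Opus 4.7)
The plan is to exploit that $\mathfrak{z}(\mathrm{UT}_n^{(-)})=\mathbb{F}\mathrm{I}$ is one-dimensional, forcing $\mathrm{I}$ to be homogeneous in every $G$-grading (set $t:=\deg\mathrm{I}$), and to prove that a $G$-grading on $\mathrm{UT}_n^{(-)}$ is pinned down, up to graded isomorphism, by its quotient grading on $\mathrm{UT}_n^{(-)}/\mathfrak{z}$ together with $t$.

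For part (1), the $(\Leftarrow)$ direction is formal: the identity map $(\mathrm{UT}_n^{(-)},\Gamma_1(t,\eta))\to(\mathrm{UT}_n^{(-)},\Gamma_1(\eta))$ is an ungraded iso whose descent to $\mathrm{UT}_n^{(-)}/\mathfrak{z}$ is a graded iso, since the two gradings agree modulo $\mathbb{F}\mathrm{I}$; composing with a graded iso $\Gamma\to\Gamma_1(t,\eta)$ yields a practical iso $\Gamma\to\Gamma_1(\eta)$. For $(\Rightarrow)$, I would push $\Gamma$ forward along the witnessing ungraded iso $\psi$ to a grading $\Gamma^*$ on $\mathrm{UT}_n^{(-)}$ (so $\Gamma\cong_G\Gamma^*$), having the same quotient as $\Gamma_1(\eta)$ and with $\mathrm{I}$ still in degree $t$, and then verify that $\Gamma^*$ is literally of the form $\Gamma_1(t,\eta)$. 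Picking lifts $h_i\in\mathcal{U}^*_1$ of $\overline{e_{ii}}$ and $f_i\in\mathcal{U}^*_{g_i}$ of $\overline{e_{i,i+1}}$, so that $h_i=e_{ii}+\gamma_i\mathrm{I}$ and $f_i=e_{i,i+1}+\alpha_i\mathrm{I}$, the identity $[h_i,f_i]=[e_{ii},e_{i,i+1}]=e_{i,i+1}$ (valid in every characteristic) places $e_{i,i+1}$ in $\mathcal{U}^*_{g_i}$ and forces $\alpha_i=0$ whenever $g_i\ne t$. The remaining $e_{ij}$ with $j>i+1$ are then homogeneous as iterated brackets of the $e_{i,i+1}$, and $V:=\mathrm{span}\{h_1,\ldots,h_{n-1}\}$ is a complement to $\mathbb{F}\mathrm{I}$ in $\mathscr{D}$ lying in $\mathcal{U}^*_1$, displaying $\Gamma^*$ as a concrete realization of $\Gamma_1(t,\eta)$.

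Part (2) proceeds in exact parallel, with type 2 gradings in place of elementary ones. After the same pushforward, I would show that each $X_{ij}^\pm$ is homogeneous of the prescribed degree in $\Gamma^*$ using analogous bracket identities (using that the involution $\tau$ is graded and commutes with the relevant brackets), and then identify $\Gamma^*$ with the practical type 2 grading $\Gamma_2(t,g,\eta)$. The hypothesis $n>2$ is needed to ensure the off-diagonal $\pm$-decomposition is nondegenerate; for $n=2$ one of the $X_{12}^\pm$ vanishes, causing the two families to collapse, which is precisely the content of the addendum. For the addendum I would verify directly on the two-dimensional non-abelian Lie algebra $\mathrm{UT}_2^{(-)}/\mathfrak{z}$ that the practical elementary and type 2 gradings induce the same quotient grading when their parameters are matched appropriately, establishing the practical iso; a genuine graded iso must additionally match $\deg\mathrm{I}$, which gives $t=g$ and in particular $t\ne t^2=1$.

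The main obstacle is the characteristic-$2$ version of the bracket step in (1) $(\Rightarrow)$: the more natural identity $[e_{ii}-e_{i+1,i+1},e_{i,i+1}]=2e_{i,i+1}$ degenerates to zero when $\mathrm{char}\,\mathbb{F}=2$ and therefore carries no information about $\alpha_i$. This is why I would work with lifts of $\overline{e_{ii}}$ itself rather than of $\overline{e_{ii}-e_{i+1,i+1}}$, so that the key identity becomes $[h_i,f_i]=e_{i,i+1}$, whose right-hand side is nonzero in every characteristic. The analogous technical point in part (2) is handled the same way.
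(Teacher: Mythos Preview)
Your proposal is correct. Note that the paper does not give a formal proof of this lemma (it is marked with \qed after the statement); the informal discussion preceding it, together with Lemma~\ref{kythm} which appears immediately afterwards, is the intended justification.

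The one substantive difference between your route and the paper's is in the $(\Rightarrow)$ direction. You push forward $\Gamma$ along $\psi$ to a grading $\Gamma^*$ and then verify \emph{directly} that $\Gamma^*$ is a concrete instance of $\Gamma_1(t,\eta)$: you lift $\overline{e_{ii}}$ and $\overline{e_{i,i+1}}$ to homogeneous elements $h_i,f_i$ and use the bracket $[h_i,f_i]=e_{i,i+1}$ to force each $e_{i,i+1}$ itself to be homogeneous, then build the complement $V$ from the $h_i$. The paper instead quotes the black-box Lemma~\ref{kythm} (from \cite{KY2019}): once one observes that $\psi$ induces a graded isomorphism both on $\mathrm{UT}_n^{(-)}/\mathfrak{z}$ and on $\mathfrak{z}$ (the latter because $\deg_\Gamma\mathrm{I}=t=\deg_{\Gamma_1(t,\eta)}\mathrm{I}$), that lemma immediately gives $\Gamma\cong_G\Gamma_1(t,\eta)$. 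Your approach is more self-contained and makes explicit why no characteristic hypothesis is needed (your choice of $[h_i,f_i]$ rather than $[h_i-h_{i+1},f_i]$ is exactly the right one for characteristic $2$); the paper's approach is shorter but defers the actual work to the cited lemma. Both are valid, and your direct computation is essentially what one would write out if asked to unwind the proof of Lemma~\ref{kythm} in this particular case.
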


In order to compare two practical elementary gradings or two type 2 gradings defined by the same parameters, we recall the following fact.

\begin{Lemma}[{\cite[Theorem 27]{KY2019}}]\label{kythm}
Let $\mathcal{L}_1$ and $\mathcal{L}_2$ be $G$-graded Lie algebras, and assume that there exists an isomorphism of (ungraded) algebras $\psi:\mathcal{L}_1\to\mathcal{L}_2$ such that both the induced map $\mathcal{L}_1/\mathfrak{z}(\mathcal{L}_1)\to \mathcal{L}_2/\mathfrak{z}(\mathcal{L}_2)$ and the restriction $\mathfrak{z}(\mathcal{L}_1)\to\mathfrak{z}(\mathcal{L}_2)$ are $G$-graded isomorphisms. Then $\mathcal{L}_1$ and $\mathcal{L}_2$ are isomorphic as $G$-graded algebras.\qed
\end{Lemma}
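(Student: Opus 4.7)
The plan is to construct an honest graded isomorphism $\phi\colon\mathcal{L}_1\to\mathcal{L}_2$ by extracting the ``degree-$g$ component'' of $\psi$. For each homogeneous $v\in(\mathcal{L}_1)_g$, decompose $\psi(v)=\sum_{h\in G}\psi(v)_h$ according to the $G$-grading of $\mathcal{L}_2$. The centre $\mathfrak{z}(\mathcal{L}_2)=\mathrm{C}_{\mathcal{L}_2}\mathcal{L}_2$ is a graded subspace by the proposition already established, so the grading descends to $\mathcal{L}_2/\mathfrak{z}(\mathcal{L}_2)$, and the hypothesis that $\bar\psi$ is graded translates into the statement that $\psi(v)-\psi(v)_g\in\mathfrak{z}(\mathcal{L}_2)$ for every homogeneous $v\in(\mathcal{L}_1)_g$. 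I would then set $\phi(v):=\psi(v)_g$, extend by linearity, and define $c:=\psi-\phi$, which by construction is a linear map $\mathcal{L}_1\to\mathfrak{z}(\mathcal{L}_2)$. By design $\phi$ carries $(\mathcal{L}_1)_g$ into $(\mathcal{L}_2)_g$, i.e., it is $G$-graded.

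The heart of the argument is to check that $\phi$ is a Lie homomorphism. For homogeneous $x\in(\mathcal{L}_1)_h$ and $y\in(\mathcal{L}_1)_k$, the centrality of $c(x)$ and $c(y)$ gives
\[
\psi([x,y])=[\psi(x),\psi(y)]=[\phi(x)+c(x),\,\phi(y)+c(y)]=[\phi(x),\phi(y)]\in(\mathcal{L}_2)_{hk}.
\]
Thus $\psi([x,y])$ is automatically homogeneous of degree $hk$, which forces $c([x,y])=0$ and hence $\phi([x,y])=\psi([x,y])=[\phi(x),\phi(y)]$. Bilinearity then extends the identity to all of $\mathcal{L}_1\times\mathcal{L}_1$.

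Finally, for bijectivity I would invoke the second hypothesis: since $\psi|_{\mathfrak{z}(\mathcal{L}_1)}$ is graded, for $z\in\mathfrak{z}(\mathcal{L}_1)_g$ one has $\psi(z)\in\mathfrak{z}(\mathcal{L}_2)_g$, so that $c(z)=0$ and $\phi|_{\mathfrak{z}(\mathcal{L}_1)}=\psi|_{\mathfrak{z}(\mathcal{L}_1)}$ is a bijection onto $\mathfrak{z}(\mathcal{L}_2)$. Since the induced map on quotients is exactly $\bar\psi$, which is also a bijection, a short five-lemma style argument shows $\phi$ itself is bijective, giving the required graded isomorphism. The step I expect to be the main subtlety is precisely the observation that forces $c$ to vanish on brackets: it simultaneously exploits that the ``error'' $c$ lands in $\mathfrak{z}(\mathcal{L}_2)$ (so it drops out of any commutator) \emph{and} that $[\phi(x),\phi(y)]$ is automatically homogeneous of the expected degree. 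Without the first hypothesis the error would not be central; without the second we could not match up the centres and patch bijectivity together.
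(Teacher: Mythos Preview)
Your proof is correct. The paper itself does not prove this lemma: it is quoted from \cite[Theorem 27]{KY2019} and simply marked with \qed, so there is no in-paper argument to compare against. Your construction---defining $\phi$ as the ``diagonal part'' of $\psi$, so that the defect $c=\psi-\phi$ lands in the centre, and then observing that $c$ vanishes on brackets because $[\phi(x),\phi(y)]$ is already homogeneous of the right degree---is exactly the standard way to prove this kind of statement, and is in fact the argument used in \cite{KY2019}. The five-lemma step for bijectivity is fine as stated, since you only need it at the level of vector spaces.
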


As an immediate consequence, we obtain that two practical elementary gradings on $\mathrm{UT}_n^{(-)}$, defined by the same pair $(t,\eta)$ are isomorphic as $G$-graded algebras. Indeed, one may apply Lemma \ref{kythm} where $\psi$ is chosen to be the identity map. This justifies the reason why the choice of the complementing subspace used in the construction of \ref{example} is irrelevant in terms of isomorphism classes of gradings on $\mathrm{UT}_n^{(-)}$. The same discussion remains valid for practical type 2 gradings.

\subsubsection{Semihomogeneous elements} The following technical definition will also be important for our proofs.

\begin{Def}[{\cite[Definition 3]{KY2019}}]
Let $\mathcal{L}$ be a $G$-graded algebra. We say that $x\in\mathcal{L}$ is \textsl{semihomogeneous} of degree $g$ if there exist $y\in\mathcal{L}_g$ and $z\in\mathfrak{z}(\mathcal{L})$ such that $x=y+z$.
\end{Def}

Notice that every homogeneous element is a particular case of a semihomogeneous element. Also, if $x$ and $y$ are semihomogeneous elements of degree $g$ and $h$, respectively, then their bracket $[x,y]$ is either $0$ or is homogeneous of degree $gh$.

Considering $\mathrm{UT}_n^{(-)}$, we have the following result.

\begin{Lemma}\label{semihomogeneous_elem}
Assume that $\mathrm{UT}_n^{(-)}$ is $G$-graded in such a way that $e_{11},e_{22},\ldots,e_{nn}$ are semihomogeneous elements. Then the grading is practically graded isomorphic to an elementary grading.
\end{Lemma}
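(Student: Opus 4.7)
My plan is to show that every off-diagonal matrix unit $e_{jk}$ (for $j<k$) is itself homogeneous and that each homogeneous piece $d_i := e_{ii}-\lambda_i\mathrm{I}$ of $e_{ii}$ (with $\lambda_i\in\mathbb{F}$ chosen so that $d_i$ is homogeneous of some degree $g_i$) in fact has degree $g_i=1$. Once this is established, identifying $\Gamma$ with a practical elementary grading is routine.

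For the core step, I fix $j<k$ and decompose $e_{jk} = \sum_{h\in G} e_{jk}^{(h)}$ into its homogeneous components. Since $\mathrm{ad}(d_i) = \mathrm{ad}(e_{ii})$ annihilates $e_{jk}$ whenever $i\notin\{j,k\}$ and is grading-homogeneous, every $e_{jk}^{(h)}$ lies in the common kernel $\bigcap_{i\notin\{j,k\}}\ker\mathrm{ad}(e_{ii})$. A short calculation in the standard basis shows this equals $\mathscr{D}\oplus\mathbb{F}e_{jk}$, so I may write $e_{jk}^{(h)} = \alpha_h e_{jk} + D_h$ with $\alpha_h\in\mathbb{F}$ and $D_h\in\mathscr{D}$. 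Applying now $\mathrm{ad}(d_j)$, using $[d_j,e_{jk}]=e_{jk}$ and $[d_j,D_h]=0$, and matching the $\mathcal{N}$- and $\mathscr{D}$-parts separately in the identity $[d_j,e_{jk}^{(h)}] = e_{jk}^{(g_jh)}$, I extract $\alpha_h = \alpha_{g_jh}$ and $D_{g_jh}=0$ for every $h\in G$. So every $D_h$ vanishes, and $\alpha$ is constant along $g_j$-orbits. Combined with $\sum_h\alpha_h = 1$ and the fact that $e_{jk}$ lies in at most one graded component, exactly one $\alpha_{h_0}$ equals $1$; the $g_j$-invariance then forces $g_jh_0 = h_0$, i.e.\ $g_j=1$. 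The symmetric argument with $\mathrm{ad}(d_k)$ yields $g_k=1$, and letting $(j,k)$ vary covers every index $i$. The whole argument is characteristic-free because only the vanishing eigenvalues of $\mathrm{ad}(e_{ii})$ for $i\notin\{j,k\}$ are used.

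Having shown that all $d_i$ lie in $\mathcal{U}_1$, the subspace $\mathrm{Span}\{d_1,\ldots,d_n\}$ is contained in $\mathcal{U}_1$ and, together with $\mathbb{F}\mathrm{I}$, spans $\mathscr{D}$; a small case distinction (depending on whether $\sum_i\lambda_i = 1$) lets me pick a complement $V$ of $\mathbb{F}\mathrm{I}$ in $\mathscr{D}$ sitting entirely inside $\mathcal{U}_1$. Setting $\eta = (\deg e_{1,2},\ldots,\deg e_{n-1,n})$, the data (all $e_{i,i+1}$ homogeneous of the prescribed degrees, $V\subseteq\mathcal{U}_1$, $\mathrm{I}\in\mathcal{U}_{\deg\mathrm{I}}$) is precisely the defining data of the practical elementary grading $\Gamma_1(\deg\mathrm{I},\eta)$, so the identity map realises $\Gamma \cong \Gamma_1(\deg\mathrm{I},\eta)$; the pairwise commutativity of $\eta$ (required for $\Gamma_1(\eta)$ to be well-defined) is forced by the existence of $\Gamma$ via the standard bracket identities for matrix units. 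Lemma~\ref{fact}(1) then yields practical graded isomorphism with $\Gamma_1(\eta)$. The main obstacle is the coupled bookkeeping in the central step, where one has to extract simultaneously the homogeneity of each $e_{jk}$ and the triviality of $g_j$ from a single pair of equations produced by matching homogeneous components on both sides of $[d_j,e_{jk}^{(h)}] = e_{jk}^{(g_jh)}$.
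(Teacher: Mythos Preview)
Your argument is correct, but it is considerably more elaborate than the paper's. The paper observes in one line that for $i<j$ the subspace
\[
[[\mathrm{UT}_n^{(-)},e_{ii}],e_{jj}]=\mathbb{F}e_{ij}
\]
is graded (being the image of the graded space $\mathrm{UT}_n^{(-)}$ under the composition $\mathrm{ad}(d_j)\circ\mathrm{ad}(d_i)$ of homogeneous operators), hence $e_{ij}$ is homogeneous; the conclusion then follows from Lemma~\ref{fact}. You instead work with the \emph{kernel} $\bigcap_{i\notin\{j,k\}}\ker\mathrm{ad}(e_{ii})=\mathscr{D}\oplus\mathbb{F}e_{jk}$, which is a larger space, and must then disentangle the diagonal part $D_h$ from the $e_{jk}$-part by a further application of $\mathrm{ad}(d_j)$ and a component-matching argument. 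Both routes are valid and characteristic-free; the paper's image argument is shorter because it lands in a one-dimensional space immediately, whereas your kernel argument trades that economy for an explicit verification that each $d_i$ has degree~$1$ and an explicit construction of the complement $V\subset\mathcal{U}_1$---details the paper leaves implicit (they follow at once from $[d_i,e_{i,i+1}]=e_{i,i+1}$ once $e_{i,i+1}$ is known to be homogeneous).
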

\begin{proof}
Given $i<j$, we have that $[\mathrm{UT}_n^{(-)},e_{ii},e_{jj}]=\mathbb{F}e_{ij}$ is a graded subspace. Thus $e_{ij}$ is a homogeneous element. So the sequence $(\deg e_{12},\deg e_{23},\ldots,\deg e_{n-1,n})$ provides the construction of a practical elementary grading. Thus, Lemma \ref{fact} tells us that the original grading is practically $G$-graded isomorphic to an elementary grading.
\end{proof}

A similar statement holds in the context of type 2 gradings (see Lemma \ref{semihomogeneous_type2} below).

\subsubsection{Essential support of a grading} Finally, we introduce the support of the grading, up to the centre of the algebra.
\begin{Def}\label{def_ess_supp}
Let $\mathcal{L}=\bigoplus_{g\in G}\mathcal{L}_g$ be $G$-graded. The \textsl{essential support} of the grading is
\[
\{g\in G\mid\mathcal{L}_g\not\subseteq\mathfrak{z}(\mathcal{L})\}.
\]
\end{Def}
If, for instance, $\mathfrak{z}(\mathcal{L})=0$, then the essential support coincides with the ordinary support of the grading. The support of an elementary grading on $\mathrm{UT}_n^{(-)}$ is commutative, and the support of a practical elementary grading is almost commutative in the following sense.
\begin{Lemma}\label{ess_supp}
Let $\mathrm{UT}_n^{(-)}$ be endowed with a practical elementary grading $\Gamma_1(t,\eta)$. Then the essential support of the grading is commutative, that is, if we construct a grading via \ref{example} using an arbitrary $\eta$, then the grading is consistent if and only if the elements of $\eta$ commute pairwise.
\end{Lemma}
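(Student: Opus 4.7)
The plan is to directly compute the essential support and reinterpret the claim as an iff about the compatibility of the construction in \ref{example}. Recall that in $\Gamma_1(t,\eta)$ the elements of $V$ have degree $1$, the identity has degree $t$, and $\deg e_{ij}=g_ig_{i+1}\cdots g_{j-1}$ for $i<j$. Since $e_{ij}\notin\mathbb{F}\mathrm{I}$ whenever $i<j$, and $V\not\subseteq\mathbb{F}\mathrm{I}$ for $n\ge 2$, the essential support is exactly
\[
\{1\}\cup\{g_ig_{i+1}\cdots g_{j-1}\mid 1\le i<j\le n\},
\]
a subset of the subgroup $\langle g_1,\ldots,g_{n-1}\rangle$. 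If the $g_i$ commute pairwise, this subgroup is abelian, so the essential support is commutative.

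For the equivalent iff reformulation, the ``if'' direction is built into the construction of \ref{example}: given pairwise commuting $g_i$, the prescription $\deg e_{ij}:=g_i\cdots g_{j-1}$ is unambiguous and one checks directly that $\mathcal{U}_g\cdot\mathcal{U}_h\subseteq\mathcal{U}_{gh}$.

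The crux is the ``only if'' direction. Assume the prescription of \ref{example} actually defines a $G$-grading on $\mathrm{UT}_n^{(-)}$. For any $1\le i<j<k\le n$, both $[e_{ij},e_{jk}]=e_{ik}$ and $[e_{jk},e_{ij}]=-e_{ik}$ are nonzero homogeneous, hence must land in the same homogeneous component. This yields
\[
(g_ig_{i+1}\cdots g_{j-1})(g_jg_{j+1}\cdots g_{k-1})=(g_jg_{j+1}\cdots g_{k-1})(g_ig_{i+1}\cdots g_{j-1})
\]
in $G$. Specialising $j=i+1$, we learn that $g_i$ commutes with $g_{i+1}g_{i+2}\cdots g_{k-1}$ for every $k>i+1$.

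From this family of identities I extract pairwise commutativity by a short induction on $k-i$. The base case $k=i+2$ gives $g_ig_{i+1}=g_{i+1}g_i$. Assuming $g_i$ commutes with $g_{i+1},g_{i+2},\ldots,g_{k-2}$, the identity $g_i(g_{i+1}\cdots g_{k-1})=(g_{i+1}\cdots g_{k-1})g_i$ can be rewritten, using the inductive hypothesis to pull $g_i$ through $g_{i+1}\cdots g_{k-2}$ on the right-hand side, as $g_i(g_{i+1}\cdots g_{k-1})=(g_{i+1}\cdots g_{k-2})g_ig_{k-1}$; cancelling $g_{i+1}\cdots g_{k-2}$ on the left then gives $g_ig_{k-1}=g_{k-1}g_i$. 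Thus every pair of entries of $\eta$ commutes, completing the proof. No step here poses a real obstacle; the only subtlety is simply to remember to use both orientations of the Lie bracket to extract the nontrivial commutation relation, since the single identity $\mathcal{U}_g\mathcal{U}_h\subseteq\mathcal{U}_{gh}$ without antisymmetry would not force any commutativity on the $g_i$.
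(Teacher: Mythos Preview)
Your proof is correct and follows essentially the same strategy as the paper: exploit the antisymmetry of the Lie bracket on nonzero products of matrix units to force relations among the $g_i$. The only difference is in how pairwise commutativity of $g_i$ and $g_j$ for non-adjacent indices is extracted. You use the basic relation $[e_{i,i+1},e_{i+1,k}]=-[e_{i+1,k},e_{i,i+1}]$ to see that $g_i$ commutes with every tail product $g_{i+1}\cdots g_{k-1}$, and then peel off one factor at a time by induction. The paper instead writes down, for each pair $i<j$, two explicit iterated brackets both equal to $\pm e_{i,j+1}$, whose degrees are $g_ig_j(g_{i+1}\cdots g_{j-1})$ and $g_jg_i(g_{i+1}\cdots g_{j-1})$ respectively, giving $g_ig_j=g_jg_i$ in one stroke with no induction. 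Your route is arguably more transparent; the paper's avoids the inductive bookkeeping. One small wording slip: in your inductive step you say you pull $g_i$ through $g_{i+1}\cdots g_{k-2}$ ``on the right-hand side'', but what you actually do (correctly) is rewrite the \emph{left}-hand side $g_i(g_{i+1}\cdots g_{k-2})g_{k-1}$ as $(g_{i+1}\cdots g_{k-2})g_ig_{k-1}$ and compare it with the right-hand side $(g_{i+1}\cdots g_{k-2})g_{k-1}g_i$.
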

\begin{proof}
Suppose that we are given a grading on $\mathrm{UT}_n^{(-)}$ as constructed in \ref{example} (without requiring the elements of $\eta$ to commute pairwise). It is clear that the essential support of the grading is contained in the subgroup generated by $g_1:=\deg e_{12}$, $g_2:=\deg e_{23}$, \dots, $g_{n-1}:=\deg e_{n-1,n}$. Thus, it is enough to prove that the above elements commute pairwise. Let $1\le i<j<n$. If $j=i+1$, then $0\ne[ e_{i,i+1}, e_{j,j+1}]=[e_{j,j+1}, e_{i,i+1}]$, thus $g_ig_j=g_jg_i$. If $j>i+1$. Then we have
\begin{align*}
0\ne-e_{i,j+1}&=[e_{i,i+1},[e_{j,j+1},[e_{i+1,i+2},e_{i+2,i+3},\ldots,e_{j-1,j}]]]\\%
&=[e_{j,j+1},[e_{i,i+1}, e_{i+1,i+2},\ldots, e_{j-1,j}]].
\end{align*}
Thus $g_ig_j(g_{i+1}\cdots g_{j-1})=g_jg_i(g_{i+1}\cdots g_{j-1})$, so $g_ig_j=g_jg_i$.
\end{proof}

In an analogous fashion, we have a similar lemma concerning the Type 2 grading, which we state without proof. Its proof is straightforward and tedious: it consists of a simple computation with the homogeneous elements on the first diagonal above the main diagonal.
\begin{Lemma}\label{ess_supp2}
If $n>2$, then the essential support of a practical type 2 grading is commutative.\qed
\end{Lemma}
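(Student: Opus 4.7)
The plan parallels that of Lemma \ref{ess_supp}: reduce the commutativity of the essential support to that of a subgroup of $G$ which is abelian by construction.

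Concretely, the defining homogeneous generators $X_{ii}^\pm$ and $X_{i,i+1}^\pm$ of the Type~2 grading $\Gamma_2(g,\eta)$ have degrees in $\{1, g, g_i, gg_i\}$, all lying in $H := \langle g, g_1, \ldots, g_{n-1}\rangle$. Any other homogeneous element of $\mathrm{UT}_n^{(-)}$ is an iterated bracket of these generators; in particular, each $X_{ij}^\pm$ with $j-i\ge 2$ is, up to a nonzero scalar, a bracket of consecutive first-superdiagonal generators $X_{k,k+1}^\pm$, so its degree is a product of the $g_k$'s and $g$, hence lies in $H$. This shows that the support of $\Gamma_2(g,\eta)$ is contained in $H$.

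Passing from $\Gamma_2(g,\eta)$ to its practical variant $\Gamma_2(t,g,\eta)$ only changes the degree of the central element $\mathrm{I}$ from $g$ to $t$; every non-central homogeneous element retains its original degree in $H$. If $t\notin H$, then $\mathcal{U}_t' = \mathbb{F}\mathrm{I}$ is central and $t$ falls outside the essential support; if $t\in H$, then $t$ is already inside the abelian subgroup. In either case the essential support of $\Gamma_2(t,g,\eta)$ is contained in $H$, which is abelian by the pairwise commutativity of $g, g_1, \ldots, g_{n-1}$ imposed in Definition \ref{type2grading}. Hence the essential support is commutative.

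The main (combinatorial) obstacle is that certain ``direct'' brackets such as $[X_{i,i+1}^-, X_{i+1,i+2}^-]$ vanish at the degenerate middle positions ($X_{n/2,n/2+1}^- = 0$ for even $n$, and $X_{(n+1)/2,(n+1)/2}^- = 0$ for odd $n$). The hypothesis $n > 2$ guarantees that a neighbouring bracket such as $[X_{n/2-1,n/2}^-, X_{n/2,n/2+1}^+]$ is always available, and the resulting nonzero homogeneous element still has degree in $H$, so the coverage of all non-central homogeneous components goes through. This is the ``tedious but straightforward'' computation on the first super-diagonal hinted at before the statement of the lemma.
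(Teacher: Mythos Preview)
Your argument establishes the literal statement but not what the paper actually needs. You take the pairwise commutativity of $g, g_1, \ldots, g_{n-1}$ as given by Definition~\ref{type2grading} and then observe that the essential support sits inside the abelian group $H=\langle g,g_1,\dots,g_{n-1}\rangle$. With that reading the lemma is a near-tautology. But the remark immediately following the lemma makes the intended content explicit: one assumes only that a decomposition of type~2 shape yields a consistent grading, \emph{without} presupposing that the parameters commute, and then \emph{derives} the commutativity from bracket relations among the $X_{i,i+1}^\pm$. This is the direct analogue of Lemma~\ref{ess_supp}, where identities such as $[e_{i,i+1},[e_{j,j+1},\cdot\,]]=[e_{j,j+1},[e_{i,i+1},\cdot\,]]$ force $g_ig_j=g_jg_i$; the paper's hint (``computation with the homogeneous elements on the first diagonal above the main diagonal'') refers to the type~2 version of that calculation, not to your verification that iterated brackets stay inside $H$. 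The distinction matters in the proof of Theorem~\ref{thm1}: there one first shows that an arbitrary grading has type~2 shape and only \emph{then} invokes Lemma~\ref{ess_supp2} to conclude the essential support is commutative. With your argument that step becomes circular.

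A minor aside: even for your weaker reading, the discussion of vanishing middle brackets is unnecessary. The homogeneous elements $X_{ii}^\pm$, $X_{i,i+1}^\pm$ span the diagonal together with the first super-diagonal and hence generate $\mathrm{UT}_n^{(-)}$ as a Lie algebra; consequently $\bigoplus_{h\in H}\mathcal{U}_h=\mathrm{UT}_n^{(-)}$ and $\mathrm{Supp}\,\Gamma_2(g,\eta)\subseteq H$ follows at once, without exhibiting a particular nonzero bracket realizing each $X_{ij}^\pm$.
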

It should be noted that the statement of the previous lemma means the following. If we manage to construct a practical type 2 grading, possibly without requiring commutativity of the elements of the group, then we conclude that the essential support of the grading generates an abelian subgroup.

\subsection{Duality of gradings and actions. Automorphism group of $\mathrm{UT}_n^{(-)}$} 
This section is intended to explain the connection between the theory of   algebras graded by an abelian group, and action of groups. We also collect results from \cite{Dokovic} where the Lie automorphisms of upper triangular matrices were computed. This shall give us an insight of what one can expect from the possible (abelian) group gradings on $\mathrm{UT}_n^{(-)}$.

\subsubsection{Duality between gradings and actions} It is well-known that there is a duality between gradings and actions when the base field is algebraically closed of characteristic zero and the grading group is finite abelian. Indeed, suppose all these conditions are met, we have $\hat G:=\mathrm{Hom}(G,\mathbb{F}^\ast)\cong G$, where $\mathbb{F}^\ast$ is the multiplicative group of the non-zero elements of $\mathbb{F}$. Now consider a $G$-grading $\mathcal{A}=\bigoplus_{g\in G}\mathcal{A}_g$ on a finite-dimensional $\mathbb{F}$-algebra $\mathcal{A}$. Then we define a representation of $\hat G$ in $\mathcal{A}$ where $\chi\in\hat G$ acts via $\chi(a)=\chi(g)a$, for every $a\in\mathcal{A}_g$, and $g\in G$. The property concerning product of homogeneous components is equivalent that such representation is a group homomorphism $\hat G\to\mathrm{Aut}(\mathcal{A})$. Conversely, an action of $\hat G$ on $\mathcal{A}$ is completely reducible where each irreducible component is 1-dimensional. Therefore we obtain an eigenspace decomposition of $\mathcal{A}$, indexed by the elements of $\hat{\hat G}\cong G$. Such a decomposition is readily seem to be a $G$-grading, since $\hat G\to\mathrm{Aut}(\mathcal{A})$.
Thus the knowledge of the ordinary automorphism group of the algebra gives us some insight of what we may expect from its group gradings.

It is interesting to note that the duality works for algebraically closed fields of characteristic zero and finitely generated abelian groups (not necessarily finite). In this case, it is well known that $G\cong G_t\times\mathbb{Z}^m$, where $G_t$ is the torsion subgroup of $G$. So $\hat G\cong\hat{G_t}\times(\mathbb{F}^\ast)^m$ is an algebraic group, usually called a \textsl{quasitorus}. Analogously as in the finite abelian case, a $G$-grading implies the existence of an action by $\hat G$. Conversely, note that $\mathrm{Aut}(\mathcal{A})$ is an algebraic group if $\mathcal{A}$ is finite-dimensional. Consider an action of $\hat G$ on $\mathcal{A}$, that is, a homomorphism of algebraic groups $\hat G\to\mathrm{Aut}(\mathcal{A})$. The quasitori enjoy the property that all of their representations are diagonalizable. Thus we obtain a decomposition of $\mathcal{A}$ labelled by $\mathfrak{X}(\hat G)$, where $\mathfrak{X}(\hat G)$ is the group of homomorphisms of algebraic groups $\hat G\to\mathbb{F}^\ast$. Since $G\cong\mathfrak{X}(\hat G)$, we obtain a $G$-grading. Hence, once again we have the duality. So the duality establishes a link between the theory of abelian group gradings and  methods and tools from the theory of algebraic groups and algebraic geometry.

Finally, going one step further, the duality also works over an arbitrary base field if we consider the automorphism group scheme of the algebra (see, for instance, \cite[Section 1.4]{EK2013}). We shall not give the details here. It is worth mentioning that an affine group scheme corresponds to a cocommutative Hopf algebra, and the action of an affine group scheme corresponds to an action of the Hopf algebra (which is also equivalent to a co-action). In the context of abelian gradings, the corresponding Hopf algebra is the group algebra $\mathbb{F}G$, and the respective action and co-action are easily described. Thus, there is also a connection between group gradings and actions of Hopf algebras.

\subsubsection{Automorphism group of $\mathrm{UT}_n^{(-)}$} The automorphism group of $\mathrm{UT}_n^{(-)}$ is known thanks to the paper \cite{Dokovic}, where the author classified the Lie automorphisms of $\mathrm{UT}_n(\mathcal{R})$, where $\mathcal{R}$ is an associative and commutative unital connected ring.

Let $\omega\colon \mathrm{UT}_n^{(-)}\to\mathrm{UT}_n^{(-)}$ be defined by $\omega(e_{ij})=-e_{n-j+1,n-i+1}$. Then it is easy to check that $\omega$ is an automorphism of $\mathrm{UT}_n^{(-)}$ (note that $\omega$ is obtained by an involution with a minus sign). Define $G_0$ as the set of all inner automorphisms of $\mathrm{UT}_n^{(-)}$, that is, all conjugations by an invertible upper triangular matrix. For each sequence $\textbf{a}=(a_1,\ldots,a_n)\in\mathbb{F}^n$ such that $a_1+a_2+\cdots+a_n\ne1$, let $\varphi_\textbf{a}\colon \mathrm{UT}_n^{(-)}\to\mathrm{UT}_n^{(-)}$ be defined by
\[
\varphi_\textbf{a}(e_{ij})=e_{ij}+\delta_{ij}a_i\mathrm{I}.
\]
Let $G_1=\{\varphi_\textbf{a}\mid\textbf{a}=(a_1,\ldots,a_n)\in\mathbb{F}^n,\,a_1+\cdots+a_n\ne1\}$. Then one can verify that $G_1$ is a subgroup of the automorphism group of $\mathrm{UT}_n^{(-)}$. Moverover, \cite[Propositon 3]{Dokovic} states that each element of $G_0$ commutes with each element of $G_1$ and $G_0\cap G_1=1$. The statements of \cite[Propositions 4 and 5]{Dokovic} tell us that $\omega$ normalizes the subgroup $G_0G_1$. Then \cite[Theorem 6]{Dokovic} reads as
\begin{enumerate}
\item $\mathrm{Aut}(\mathrm{UT}_2^{(-)})=G_0G_1$,
\item if $n>2$, then $\mathrm{Aut}(\mathrm{UT}_n^{(-)})=(G_0G_1)\rtimes\langle\omega\rangle$, where $\rtimes$ stands for the semidirect product of the corresponding groups.
\end{enumerate}
So, we may expect that the gradings on $\mathrm{UT}_n^{(-)}$ are, in a sense, close to elementary ones and to type 2 gradings. However, as we may see, if $\mathrm{char}\,\mathbb{F}=2$, there will be no type 2 grading.

It is also relevant to cite the paper \cite{Cao1997}, where the author computes the Lie automorphisms of $\mathrm{UT}_n(\mathcal{R})$, where $\mathcal{R}$ is an associative and commutative unital ring (not necessarily connected). Thus, there is a description of the automorphism group scheme of $\mathrm{UT}_n^{(-)}$ as well.

\section{Statements of the Main results: Classification of group gradings on $\mathrm{UT}_n$}

Our first result concerns a description of the group gradings on $\mathrm{UT}_n^{(-)}$.
\begin{Thm}\label{thm1}
Let $\mathbb{F}$ be a field and let $G$ be any group. Consider a $G$-grading on $\mathrm{UT}_n^{(-)}(\mathbb{F})$. Then the essential support (see Definition \ref{def_ess_supp}) of the grading is commutative. Furthermore, the following two cases can occur:
\begin{enumerate}
\item Either the grading is practically graded isomorphic to an elementary grading $\Gamma_1(\eta)$. In this case, the grading is graded isomorphic to a practical elementary grading $\Gamma_1(\deg\mathrm{I},\eta)$, or
\item the grading is practically graded isomorphic to a type 2 grading $\Gamma_2(g,\eta)$. In this case, if $n\ge 3$ then the grading is graded isomorphic to a practical type 2 grading $\Gamma_2(\deg\mathrm{I},g,\eta)$. For $n=2$, the grading is graded isomorphic to $\Gamma_2(\deg\mathrm{I},\eta)$ if $\deg\mathrm{I}\ne(\deg\mathrm{I})^2=1$.
\end{enumerate}
Moreover, if $\mathrm{char}\,\mathbb{F}=2$, then (2) never happens. If $n=2$ and $\mathrm{char}\,\mathbb{F}\ne2$ then both (1) and (2) are true.
\end{Thm}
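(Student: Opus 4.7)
The plan is to establish the dichotomy between cases (1) and (2) by intrinsically identifying graded refinements of the diagonal subalgebra, and then applying Lemma \ref{semihomogeneous_elem} together with its type~2 analogue Lemma \ref{semihomogeneous_type2}. Lemma \ref{fact} will upgrade the resulting practical isomorphism to a graded isomorphism placing $\mathrm{I}$ in the correct component, and Lemmas \ref{ess_supp} and \ref{ess_supp2} will yield the commutativity of the essential support.

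First I would collect the intrinsic graded data. Since $\mathfrak{z}(\mathrm{UT}_n^{(-)})=\mathbb{F}\mathrm{I}$, the centralizer proposition forces $\mathrm{I}$ to be homogeneous of some degree $t$. The derived subalgebra $N:=[\mathrm{UT}_n^{(-)},\mathrm{UT}_n^{(-)}]$ coincides with the strictly upper triangular matrices, and the entire lower central series $N\supseteq[N,N]\supseteq\cdots$ consists of graded ideals, whose successive quotients are spanned by matrix units sitting on a fixed upper diagonal. The quotient $L:=\mathrm{UT}_n^{(-)}/(N\oplus\mathbb{F}\mathrm{I})$ is a $G$-graded abelian Lie algebra of dimension $n-1$, with the classes $[e_{11}],\ldots,[e_{nn}]$ subject to $\sum_i[e_{ii}]=0$.

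The heart of the argument is to show that the induced grading on $L$ is forced into one of two combinatorial patterns: either (a) every $[e_{ii}]$ is itself homogeneous in $L$, or (b) only the symmetric combinations $[e_{ii}]\pm[e_{n-i+1,n-i+1}]$ are homogeneous, and their degrees differ by a fixed element $g\in G$ with $g^2=1\neq g$. This rigidity is dictated by the adjoint action on $N/[N,N]\cong\bigoplus_i\mathbb{F}e_{i,i+1}$ via the bracket relations $[e_{ii},e_{j,j+1}]=(\delta_{ij}-\delta_{i,j+1})e_{j,j+1}$: each line $\mathbb{F}e_{j,j+1}$ must be preserved by every semihomogeneous lift of a basis of $L$, and compatibility with the grading admits only the two eigenvalue patterns above. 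In case (a), semihomogeneous lifts of the $[e_{ii}]$ activate Lemma \ref{semihomogeneous_elem} and produce case~(1). In case (b), semihomogeneous lifts of the paired classes realize $X_{ii}^\pm$ as semihomogeneous elements and Lemma \ref{semihomogeneous_type2} yields case~(2).

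The hard part will be making this dichotomy rigorous over an arbitrary field and without assuming that $G$ is abelian, since one cannot simply invoke the duality with quasitorus actions nor the coset structure of $\mathrm{Aut}(\mathrm{UT}_n^{(-)})=(G_0G_1)\rtimes\langle\omega\rangle$; the proof must reduce to direct linear algebra on $L$ combined with the eigenvalue structure on $N/[N,N]$, carefully separating the cases $n=2$ and $n>2$. The closing assertions are then immediate: in characteristic $2$ one has $X_{ii}^+=X_{ii}^-$, so Definition \ref{type2grading} would force $g=\deg X_{ii}^+=\deg X_{ii}^-=1$, contradicting $g\neq 1$, hence case~(2) cannot occur; and for $n=2$ the algebra is three-dimensional and the single non-trivial class $[e_{11}]=-[e_{22}]$ always admits both interpretations whenever $\mathrm{char}\,\mathbb{F}\neq 2$, precisely as recorded in the last clause of Lemma \ref{fact}.
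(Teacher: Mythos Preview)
Your overall architecture matches the paper's: establish a dichotomy on the diagonal part, then invoke Lemmas \ref{semihomogeneous_elem} and \ref{semihomogeneous_type2}, and close with Lemmas \ref{fact}, \ref{ess_supp}, \ref{ess_supp2}. But there are two genuine gaps between the proposal and a working proof.

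First, the dichotomy on $L=\mathrm{UT}_n^{(-)}/(N\oplus\mathbb{F}\mathrm{I})$ is not as automatic as you suggest. The weight lines $\mathbb{F}e_{j,j+1}$ in $N/[N,N]$ are indeed preserved by all of $L$ (the action is diagonal), but they are \emph{not} a~priori graded subspaces of $N/[N,N]$, so ``compatibility with the grading'' does not directly pin down a homogeneous basis of $L$ to pattern (a) or (b). The paper does not argue globally on $L$ at all; instead, Lemma \ref{maindivision} isolates only the two extreme classes $[e_{11}],[e_{nn}]$ via a chain of centralizers ($\mathcal{S}=\mathrm{C}_{\mathrm{UT}_n^{(-)}}(J^{n-2})$, then $\mathcal{T}=\mathrm{C}_{\mathrm{UT}_n^{(-)}}(\mathcal{S}/J^2)$, then the action on $J^{n-1}$ and $J^{n-2}/J^{n-1}$), and the remaining $[e_{ii}]$ are obtained only later, inductively, by passing to the graded subalgebra $\mathcal{C}\cong\mathrm{UT}_{n-1}^{(-)}\oplus\mathbb{F}\mathrm{I}$ (respectively $\mathcal{U}_1\cong\mathrm{UT}_{n-2}^{(-)}\oplus\mathbb{F}(e_{11}+e_{nn})$ in the type~2 branch). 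Your one-shot eigenvalue argument on $N/[N,N]$ would need substantial extra work to reach the same conclusion.

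Second, and more seriously, even if you established that each $[e_{ii}]$ (or $[X_{ii}^\pm]$) is homogeneous in $L$, you would only know that $e_{ii}+r_i$ is semihomogeneous for some $r_i\in N$. Lemma \ref{semihomogeneous_elem} and Lemma \ref{semihomogeneous_type2} require the \emph{actual} matrix units $e_{ii}$ (respectively $X_{ii}^\pm$) to be semihomogeneous. Bridging this gap is the content of Lemmas \ref{graded_ideal}--\ref{existence_semihomogeneous} and Lemmas \ref{step_type2}--\ref{existence_semihomogeneous2} in the paper: one shows the relevant ideal $I_1\cap J$ (or $D_1\cap J$) is graded, refines the ambient ``junk'' $r_i$ to lie in that ideal, observes that the resulting element has separable minimal polynomial, and then \emph{conjugates} to diagonalize it, transporting the grading to one where the honest $e_{ii}$ (or $X_{ii}^\pm$) is semihomogeneous. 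This diagonalization step is entirely absent from your outline and is not optional.
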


The next result states that two $G$-gradings on $\mathrm{UT}_n^{(-)}$ are practically graded isomorphic if and only if they almost satisfy the same graded polynomial identities. More precisely:
\begin{Thm}\label{thm2}
Let $\mathbb{F}$ be an arbitrary field and let $G$ be any group. Assume that $\mathcal{U}_1$ and $\mathcal{U}_2$ denote the same algebra $\mathrm{UT}^{(-)}_n$ but endowed with two $G$-gradings. Let $\mathcal{Z}_1=\mathfrak{z}(\mathcal{U}_1)$ and $\mathcal{Z}_2=\mathfrak{z}(\mathcal{U}_2)$ be their centres. Then the following assertions are equivalent:
\begin{enumerate}
\item $\mathcal{U}_1$ and $\mathcal{U}_2$ are practically $G$-graded isomorphic,
\item $\mathcal{U}_1/\mathcal{Z}_1$ and $\mathcal{U}_2/\mathcal{Z}_2$ are $G$-graded isomorphic,
\item $\mathcal{U}_1/\mathcal{Z}_1$ and $\mathcal{U}_2/\mathcal{Z}_2$ satisfy the same $G$-graded polynomial identities.
\end{enumerate}
\end{Thm}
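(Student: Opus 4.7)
The plan is to establish the cycle of implications $(1) \Rightarrow (2) \Rightarrow (3) \Rightarrow (1)$. The first implication is immediate from Definition \ref{practical_iso}: by construction, a practical graded isomorphism is an ungraded algebra isomorphism that descends to a $G$-graded isomorphism of the quotients modulo the centres. The second implication is the standard observation that $G$-graded isomorphic algebras share the same $T_G$-ideal of graded polynomial identities.

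The substance of the theorem lies in $(3) \Rightarrow (1)$. My strategy is to combine Theorem \ref{thm1} with the principle that the graded identities of the quotient recover its isomorphism type within a sufficiently rigid family. First, Theorem \ref{thm1} reduces each $\mathcal{U}_i$, up to practical $G$-graded isomorphism, to a canonical form: either a practical elementary grading $\Gamma_1(\deg\mathrm{I}, \eta_i)$, or, when $\mathrm{char}\,\mathbb{F}\ne 2$, a practical type 2 grading $\Gamma_2(\deg\mathrm{I}, g_i, \eta_i)$. Since $\mathcal{Z}_i = \mathbb{F}\mathrm{I}$ is one-dimensional, the quotient $\mathcal{U}_i/\mathcal{Z}_i$ inherits the underlying elementary or type 2 grading canonically, with the identity matrix playing no role; it therefore suffices to show that the $T_G$-ideal of the quotient determines the parameters $\eta_i$ (and, in the type 2 case, $g_i$) up to the natural equivalences --- reversal of $\eta$ coming from the automorphism $\omega$ of the preliminaries, and the $\sim_g$-equivalence of Lemma \ref{iso_type2_grading}.

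To recover these parameters I would argue on two fronts. Within a fixed family, the sequence $\eta$ is detectable from the $T_G$-ideal via long bracket monomials: the multilinear identity $[x_{g_1}, x_{g_2}, \ldots, x_{g_{n-1}}] \equiv 0$ on the quotient fails precisely when $(g_1, \ldots, g_{n-1})$ equals $\eta_i$ or its reversal, allowing $\eta$ to be read off up to reversal. To separate the elementary family from the type 2 family (available only when $\mathrm{char}\,\mathbb{F}\ne 2$), I would use that dimensions of homogeneous components of the quotient are invariants of the $T_G$-ideal, recoverable from the graded codimension sequence; the type 2 case exhibits a characteristic doubling of the support governed by the order-$2$ element $g$, a feature absent from the elementary case.

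The main obstacle will be making this PI-theoretic recovery precise, particularly the separation of elementary from type 2 gradings whose underlying $\eta$'s match, and handling the small exceptional cases: $n = 2$, where Lemma \ref{fact} already collapses several practical classes, and $\mathrm{char}\,\mathbb{F} = 2$, where only the elementary family exists. These borderline situations will likely require exhibiting explicit distinguishing graded polynomials rather than invoking a purely abstract rigidity principle.
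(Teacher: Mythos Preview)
Your global architecture matches the paper's: the equivalence $(1)\Leftrightarrow(2)$ is definitional, $(2)\Rightarrow(3)$ is automatic, and $(3)\Rightarrow(1)$ is handled by first invoking Theorem~\ref{thm1} to put each grading in canonical form and then showing that the $T_G$-ideal of the quotient pins down the parameters up to the expected equivalences. So the skeleton is right.

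However, the two concrete mechanisms you propose for the recovery step both have genuine gaps. First, your claim that the bracket monomial $[x^{(g_1)},x^{(g_2)},\ldots,x^{(g_{n-1})}]$ is a non-identity of the quotient \emph{precisely} when $(g_1,\ldots,g_{n-1})$ equals $\eta$ or $\mathrm{rev}\,\eta$ is false. Already for $n=3$ with $\eta=(a,a)$, the pair $(1,a^2)$ gives a non-vanishing bracket (evaluate $x^{(1)}$ at a trace-zero diagonal and $x^{(a^2)}$ at $e_{13}$), yet $(1,a^2)\not\sim(a,a)$. The paper avoids this by using the more structured polynomials $\xi_i=[x_{2i-1}^{(1)},x_{2i}^{(g_i)}]$ and then $\xi_\sigma=[\xi_{\sigma(1)},\ldots,\xi_{\sigma(n-1)}]$ for a suitable permutation $\sigma$ (Lemma~\ref{id_of_one_not_other}, relying on \cite{HY2020}); the inner commutator with a degree-$1$ variable forces each $\xi_i$ to pick out a genuine super-diagonal step rather than an arbitrary product of degrees. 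Second, your proposal to separate the elementary and type~2 families via graded codimensions or homogeneous-component dimensions is not justified: it is not a general fact that the $T_G$-ideal determines these dimensions, and you would need to prove it here. The paper instead exhibits an explicit separating polynomial $f_{g,h}=(\mathrm{ad}\,x^{(g)})^n x^{(h)}$, which is a non-identity for $\Gamma_2(g,\eta)$ (the degree-$g$ component contains non-nilpotent diagonal elements) but an identity for every elementary grading (where any non-trivial homogeneous degree lies in the nilpotent radical). This same polynomial also detects $g$ itself, handling the case $g\ne g'$ in Lemma~\ref{same_id2}.

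In short: keep your outline, but replace the naive long bracket by the $\xi_\sigma$ construction, and replace the codimension heuristic by the explicit polynomial $(\mathrm{ad}\,x^{(g)})^n x^{(h)}$.
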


Finally, we obtain a classification of isomorphism classes and practical isomorphism classes of group gradings on $\mathrm{UT}_n^{(-)}$. Given sequences $\eta=(g_1,\ldots,g_{n-1})$ and $\eta'=(g_1',\ldots,g_{n-1}')$ in $G^{n-1}$, then $\eta\sim\eta'$ denotes that either $\eta=\eta'$ or $(g_1,\ldots,g_{n-1})= \mathrm{rev}\,(\eta')$ where $\mathrm{rev}\,(\eta')= (g_{n-1}',\ldots,g_1')$ stands for the reverse sequence of $\eta'$. Suppose further that $\eta=\mathrm{rev}\,\eta$ and $\eta'=\mathrm{rev}\,\eta'$. Given $g\in G$ of order $2$, recall that $\eta\sim_g\eta'$ means that $g_i$ is $g_i'$ or $gg_i'$, for each $i=1$, 2,  \dots, $\lfloor\frac{n-1}2\rfloor$, and if $n-1$ is odd, then $g_{n/2}=g_{n/2}'$.
\begin{Thm}\label{thm3}
Let $n>2$, $t$, $t'$, $g$, $g'\in G$, with $g\ne g^2=1=\left(g'\right)^2\ne g'$, and $\eta$, $\eta'\in G^{n-1}$. Then
\begin{enumerate}
\item $\Gamma_1(t,\eta)$ and $\Gamma_1(t',\eta')$ are graded isomorphic if and only if $t=t'$ and $\eta\sim\eta'$,
\item $\Gamma_1(t,\eta)$ and $\Gamma_1(t',\eta')$ are practically graded isomorphic if and only if $\eta\sim\eta'$.
\end{enumerate}
For the remaining statements of the theorem, assume further that $n>2$, $\eta=\mathrm{rev}\,\eta$, and $\eta'=\mathrm{rev}\,\eta'$.
\begin{enumerate}
\setcounter{enumi}{2}
\item $\Gamma_2(t,g,\eta)$ and $\Gamma_2(t',g',\eta')$ are graded isomorphic if and only if $g=g'$, $t=t'$, and $\eta\sim_g\eta'$,
\item $\Gamma_2(t,g,\eta)$ and $\Gamma_2(t',g',\eta')$ are practically graded isomorphic if and only if $g=g'$ and $\eta\sim_g\eta'$.
\end{enumerate}
\end{Thm}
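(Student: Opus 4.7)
My plan is to handle (1)--(2) and (3)--(4) in parallel, reducing each to the corresponding classification modulo the centre $\mathbb{F}\mathrm{I}$ (which is always graded, as explained in Section \ref{sect23}). The key tools throughout are Lemma \ref{kythm} and the decomposition $\mathrm{Aut}(\mathrm{UT}_n^{(-)})=(G_0 G_1)\rtimes\langle\omega\rangle$ of \cite{Dokovic} recalled in the preliminaries.

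For the ``if'' direction of (1) and (2), the identity map handles the case $\eta=\eta'$ (requiring $t=t'$ for (1)), while the automorphism $\omega(e_{ij})=-e_{n-j+1,n-i+1}$ realises the reversal $\Gamma_1(t,\eta)\cong\Gamma_1(t,\mathrm{rev}\,\eta)$: indeed $\omega(e_{i,i+1})=-e_{n-i,n-i+1}$, which in $\Gamma_1(t,\mathrm{rev}\,\eta)$ has degree $(\mathrm{rev}\,\eta)_{n-i}=g_i$, and $\omega(\mathrm{I})=-\mathrm{I}$ retains degree $t$. Conversely, given a (practical) graded isomorphism $\psi$, the graded centre forces $\psi(\mathrm{I})\in\mathbb{F}\mathrm{I}$, hence $t=t'$ in the genuine graded case; in either case $\psi$ descends to a graded isomorphism $\bar\psi$ between the genuine elementary gradings on $\mathrm{UT}_n^{(-)}/\mathbb{F}\mathrm{I}$. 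Invoking the Dokovic decomposition I would write $\bar\psi$ as an element of $G_0G_1$ possibly composed with $\omega$; elements of $G_0G_1$ fix every matrix-unit line modulo the centre and therefore preserve $\eta$ exactly, while $\omega$ implements the reversal. To make this rigorous one tracks the images $\bar\psi(\bar e_{i,i+1})$ as semihomogeneous elements (Lemmas \ref{semihomogeneous_elem} and \ref{ess_supp}), and arranges by an inner conjugation that each $\bar e_{i,i+1}$ is sent to a scalar multiple of $\bar e_{i,i+1}$ or of $\bar e_{n-i,n-i+1}$; this yields $\eta\sim\eta'$.

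For (3) and (4) the same reduction modulo $\mathbb{F}\mathrm{I}$ gives a graded isomorphism of genuine type 2 gradings, and the crucial extra step is to show $g=g'$. I plan to argue that $g$ is intrinsically characterised in $\mathrm{UT}_n^{(-)}/\mathbb{F}\mathrm{I}$ as the unique element of order $2$ whose homogeneous component contains the classes of the $\tau$-symmetric diagonal elements $X_{ii}^+=e_{ii}+e_{n-i+1,n-i+1}$; equivalently, $g$ is read off the $\tau$-symmetric/skew-symmetric splitting, which any graded isomorphism must respect. Once $g=g'$ is established, conjugation by the diagonal matrix $M$ of \eqref{type2grading2} (with suitable $\epsilon_i\in\{1,-1\}$), combined with $\omega$ when needed and with Lemma \ref{iso_type2_grading}, realises $\sim_g$ on the sequence, while the degree of $\mathrm{I}$ yields $t=t'$ for (3). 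The main obstacle I foresee is precisely this intrinsic characterisation of $g$: unlike in the elementary case, $g$ is not the degree of a single canonical element, so the argument must proceed via the $\tau$-isotypic decomposition together with Lemma \ref{semihomogeneous_type2}, and must establish that any graded automorphism preserves that decomposition modulo the centre before the Dokovic-type analysis can be applied componentwise.
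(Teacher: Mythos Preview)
Your approach is valid but genuinely different from the paper's. The paper proves (2) and (4) first, entirely via graded polynomial identities: Lemma~\ref{same_id_quotient} says that $\mathrm{UT}_n^{(-)}/\mathbb{F}\mathrm{I}$ under $\Gamma_1(t,\eta)$ satisfies exactly the identities of $\Gamma_1(\eta)$, and Lemmas~\ref{same_id} and~\ref{same_id2} show that these identity ideals determine $\eta$ up to $\sim$ (respectively $(g,\eta)$ up to $\sim_g$); Theorem~\ref{thm2} then translates equality of $T_G$-ideals back into practical graded isomorphism. Parts (1) and (3) are deduced from (2) and (4) using $t=\deg\mathrm{I}$ and Lemma~\ref{kythm}. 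You instead bypass polynomial identities and work directly with the \DJ okovi\'c decomposition of $\mathrm{Aut}(\mathrm{UT}_n^{(-)})$, reading $\eta\sim\eta'$ off the action of $G_0G_1$ and $\omega$ on $J/J^2$. (A small correction: $G_0G_1$ does not fix the lines $\mathbb{F}e_{i,i+1}$ ``modulo the centre'' but modulo $J^2$; inner automorphisms perturb $e_{i,i+1}$ by elements of $J^2$, and $G_1$ is the identity on $J$. That is the filtration your argument actually uses, and Lemmas~\ref{semihomogeneous_elem} and~\ref{ess_supp} are not really the relevant inputs here.)

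Your structural route avoids the combinatorics of Lemma~\ref{id_of_one_not_other}, but the obstacle you flag for (3)--(4) is precisely where the paper's method has the advantage. In Lemma~\ref{same_id2} the element $g$ is singled out intrinsically, with no reference to $\tau$, by the fact that $\mathrm{ad}(x^{(g)})^n x^{(h)}$ is \emph{not} a graded identity of $\Gamma_2(g,\eta)$ (since $X_{11}^+$ has degree $g$ and is not ad-nilpotent), whereas every homogeneous element of any degree $\ne 1,g$ lies in $J$. This observation transplants directly into your framework: \DJ okovi\'c gives $\psi(X_{11}^+)\equiv\pm X_{11}^+\pmod{J+\mathbb{F}\mathrm{I}}$, so $\psi(X_{11}^+)$ is a non-ad-nilpotent homogeneous element of degree $g$ in $\Gamma_2(t',g',\eta')$, forcing $g=g'$ without ever invoking a $\tau$-isotypic decomposition. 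Once $g=g'$ is secured, your plan of comparing the induced action on $J/J^2$ and absorbing the sign ambiguities via Lemma~\ref{iso_type2_grading} goes through.
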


\section{Proofs}
Let $\mathbb{F}$ be an arbitrary field, $G$ any group and let $\Gamma$ be a $G$-grading on $\mathrm{UT}_n^{(-)}$. We remark that $\mathfrak{z}(\mathrm{UT}_n^{(-)})=\mathbb{F}\mathrm{I}$, so the identity matrix is always a homogeneous element. Also,
\[
J:=[\mathrm{UT}_n^{(-)},\mathrm{UT}_n^{(-)}]
\]
is always a graded subspace, and it coincides with the set of the strictly upper triangular matrices. Then $J^m:=[J,\ldots,J]$, $m$ times, is graded as well for each $m\in\mathbb{N}$. Note that $J^m=0$ if $m\ge n$, and $J^{n-1}=\mathbb{F}e_{1n}$. Thus $e_{1n}$ is always a homogeneous element.

We shall organize our argument into several steps, as follows. First we deal with the case $n=2$, and separately with the case $n>2$. The case $n>2$ will  essentially split into two parts: $\mathrm{char}\,\mathbb{F}=2$ and $\mathrm{char}\,\mathbb{F}\ne2$. It is worth mentioning that the case $n=2$ was investigated in the paper \cite{ManuPedro} when $\mathrm{char}\,\mathbb{F}=2$.

\subsection{} We include here the case $n=2$ for the sake of completeness. Moreover this will allow us to state the main result using our language and notation.

\begin{Lemma}\label{ut2}
Every $G$-grading on $\mathrm{UT}^{(-)}_2$ is practically graded isomorphic to an elementary one.
\end{Lemma}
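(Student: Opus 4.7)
Plan. The strategy is to verify the hypotheses of Lemma \ref{semihomogeneous_elem} after applying a suitable inner automorphism. First I note that two basis elements are homogeneous for general reasons: $\mathrm{I}$ is homogeneous because it spans the centre $\mathfrak{z}(\mathrm{UT}_2^{(-)})=\mathbb{F}\mathrm{I}$, which is a graded subspace, and $e_{12}$ is homogeneous because $J=[\mathrm{UT}_2^{(-)},\mathrm{UT}_2^{(-)}]=\mathbb{F}e_{12}$ is graded.

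Since $\dim_{\mathbb{F}}\mathrm{UT}_2^{(-)}=3$, a homogeneous basis of the grading must include some element $y$ outside $\mathrm{Span}\{\mathrm{I},e_{12}\}$; the only alternative is the trivial case in which the whole algebra lies in one homogeneous component, where $e_{11}$ and $e_{22}$ are already homogeneous and the conclusion is immediate. Writing $y=ae_{11}+be_{22}+ce_{12}$, the linear independence from $\mathrm{I}$ and $e_{12}$ forces $a\ne b$. Then $y-b\mathrm{I}=(a-b)e_{11}+ce_{12}$ is semihomogeneous of degree $\deg y$, as it is the sum of $y\in\mathcal{U}_{\deg y}$ and the central element $-b\mathrm{I}$. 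After rescaling, I obtain that $e_{11}+c'e_{12}$ is semihomogeneous for some $c'\in\mathbb{F}$.

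The next step is to absorb the spurious $c'e_{12}$-term by means of the inner automorphism $\psi:=\mathrm{Ad}(\mathrm{I}+c'e_{12})$. A direct matrix computation yields $\psi(\mathrm{I})=\mathrm{I}$, $\psi(e_{12})=e_{12}$, and $\psi(e_{11})=e_{11}-c'e_{12}$, hence $\psi^{-1}(e_{11})=e_{11}+c'e_{12}$. I transport the grading along $\psi$: in the pushforward grading $\psi_{\ast}\Gamma$, a vector $x$ is homogeneous of degree $g$ if and only if $\psi^{-1}(x)\in\mathcal{U}_{g}$. Consequently $e_{11}$ is semihomogeneous in $\psi_{\ast}\Gamma$, and then so is $e_{22}=\mathrm{I}-e_{11}$ because $\mathrm{I}$ is central.

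By Lemma \ref{semihomogeneous_elem}, the grading $\psi_{\ast}\Gamma$ is practically graded isomorphic to an elementary grading, and since $\psi$ realises a graded isomorphism $\Gamma\cong\psi_{\ast}\Gamma$, the same conclusion is inherited by $\Gamma$. No serious obstacle is expected; the key conceptual point is to replace $\Gamma$ by its image under an inner automorphism so that the semihomogeneity criterion is available, and the only bookkeeping subtlety is the minor separate treatment of the trivial grading on $\mathrm{UT}_{2}^{(-)}$.
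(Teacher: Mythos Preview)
Your proof is correct and follows essentially the same route as the paper's: both pick a homogeneous element outside $\mathrm{Span}\{\mathrm{I},e_{12}\}$, note that its diagonal entries are distinct, and conjugate by an upper triangular matrix to diagonalize it. The only cosmetic difference is that the paper first observes $\deg x=1$ (from $[x,e_{12}]\ne 0$) and then recognises the pushed-forward grading as practical elementary directly, whereas you skip the degree computation and route the conclusion through Lemma~\ref{semihomogeneous_elem}; both finishes are equivalent.
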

\begin{proof}
Let $x=a_{11}e_{11}+a_{22}e_{22}+a_{12}e_{12}\in \mathrm{UT}_2^{(-)}$ be a homogeneous element such that, together with $\mathrm{I}$ and $e_{12}$, it forms a basis for $\mathrm{UT}_2^{(-)}$. Note that $a_{11}\ne a_{22}$, otherwise the set $\{x,\mathrm{I},e_{12}\}$ would not be linearly independent. So $[x, e_{12}]=(a_{11}-a_{22})e_{12}\ne0$ implies that $\deg x=1$. Moreover, the eigenvalues of $x$, namely $a_{11}$ and $a_{22}$, are distinct, so we may find a conjugation $\psi$ of $\mathrm{UT}_2^{(-)}$ such that $\psi(x)=a_{11}e_{11}+a_{22}e_{22}$. Thus, $\psi$ induces a new $G$-grading on $\mathrm{UT}_2^{(-)}$ in such a way that $a_{1}e_{11}+a_{22}e_{22}$ is homogeneous of degree $1$. This grading is a practical elementary grading by definition. Hence, Lemma \ref{fact} completes the proof.
\end{proof}

\subsection{} Thus, from now on, we assume that $n>2$. Given $x\in\mathrm{UT}_n^{(-)}$ denote by $(x)_{ij}$ the entry $(i,j)$ of $x$.

\begin{Lemma}\label{maindivision}
The vector subspace generated by the image of $\mathrm{Span}\{e_{11},e_{nn}\}$ in $\mathrm{UT}_n^{(-)}/(J+\mathbb{F}\mathrm{I})$ is graded. Moreover, $\deg (e_{11}-e_{nn}+J+\mathbb{F}\mathrm{I})=1$, and also $\deg (e_{11}+e_{nn}+J+\mathbb{F}\mathrm{I})=g$, where $g^2=1$ (it may happen that $g=1$).
\end{Lemma}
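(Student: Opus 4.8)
The plan is to peel off the abelian quotient, recast the problem as one about a graded action on $J/J^2$, and then isolate the two \emph{extreme} diagonal idempotents by exploiting the lower central filtration of $J$.

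First I would record the standing reductions. Since $J=[\mathrm{UT}_n^{(-)},\mathrm{UT}_n^{(-)}]$ and $\mathfrak z(\mathrm{UT}_n^{(-)})=\mathbb F\mathrm I$ are graded, so is $J+\mathbb F\mathrm I$, and hence $A:=\mathrm{UT}_n^{(-)}/(J+\mathbb F\mathrm I)$ inherits a grading; it is an abelian Lie algebra of dimension $n-1$, spanned by the images $\bar e_{11},\dots,\bar e_{nn}$ subject to $\sum_i\bar e_{ii}=0$. Likewise $M:=J/J^2$ is a graded $A$-module with basis $f_i:=\bar e_{i,i+1}$ ($i=1,\dots,n-1$), on which $\bar e_{kk}$ acts diagonally with eigenvalue $\delta_{ki}-\delta_{k,i+1}$ on $f_i$; in particular $\bar e_{11}$ acts as the projection $E_1$ onto $f_1$ and $\bar e_{nn}$ as $-E_{n-1}$, the negative of the projection onto $f_{n-1}$. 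The resulting representation $A\to\mathrm{End}(M)$ is an injective map of graded spaces whose image is the full algebra of operators diagonal in $\{f_i\}$. The statement then splits into (a) the subspace $\mathrm{Span}\{\bar e_{11},\bar e_{nn}\}$ is graded, and (b) the two prescribed degrees.

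The heart of the proof is (a), and here lies the main obstacle: neither $A$ nor the algebra of diagonal operators distinguishes the extreme positions $1,n-1$ from the interior ones, since both are symmetric under permuting coordinates. The asymmetry must be extracted from the Lie structure of $J$. I would use that $J^{n-1}=\mathbb F e_{1n}$ is one-dimensional and graded, and consider the graded bilinear pairing $B\colon (J^{n-2}/J^{n-1})\times(J/J^2)\to J^{n-1}$ induced by the bracket, $(\bar y,\bar m)\mapsto\overline{[y,m]}$ (well defined because $[J^{n-2},J^2]$ and $[J^{n-1},J]$ lie in $J^n=0$). A direct check on the basis $\{\bar e_{1,n-1},\bar e_{2,n}\}$ of $J^{n-2}/J^{n-1}$ shows that $\bar m=\sum_i c_if_i$ lies in the right radical of $B$ if and only if $c_1=c_{n-1}=0$; hence the interior subspace $M^\circ:=\mathrm{Span}\{f_2,\dots,f_{n-2}\}$ is the right radical of a graded pairing, and is therefore graded. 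Since $A$ acts diagonally, the annihilator $\{a\in A\mid a\,M^\circ=0\}$ is precisely $\mathrm{Span}\{\bar e_{11},\bar e_{nn}\}$, and it is graded by the very argument used to prove that centralizers are graded. (For $n=3$ the space $M^\circ$ is zero and the claim is vacuous, $\mathrm{Span}\{\bar e_{11},\bar e_{nn}\}$ being all of $A$.)

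For (b) I would pass to the two-dimensional slice $W:=\mathrm{Span}\{E_1,E_{n-1}\}\subseteq\mathrm{End}(M)$. As the image of the graded space $\mathrm{Span}\{\bar e_{11},\bar e_{nn}\}$ under the graded embedding $A\to\mathrm{End}(M)$, it is a graded subspace; and since $E_1,E_{n-1}$ are orthogonal idempotents, $W$ is closed under composition, so it is a graded associative algebra isomorphic to $\mathbb F\times\mathbb F$ with unit $\mathbf 1=E_1+E_{n-1}$. Now $\mathbf 1$ is the image of $\bar e_{11}-\bar e_{nn}$, and the unit of a graded associative algebra always lies in the identity component; hence $\bar e_{11}-\bar e_{nn}$ is homogeneous of degree $1$. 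To finish I would classify the $G$-gradings of $\mathbb F\times\mathbb F$ with unit in degree $1$: using that a homogeneous element of degree $d$ squares to one of degree $d^2$, while $\mathbb F\times\mathbb F$ has no nonzero nilpotents, one sees that any component other than the identity one is spanned by $E_1-E_{n-1}$, the image of $\bar e_{11}+\bar e_{nn}$, with degree $g$ satisfying $g^2=1$. When $\mathrm{char}\,\mathbb F=2$ the elements $\bar e_{11}\pm\bar e_{nn}$ coincide, so $g=1$ automatically, consistent with the later fact that Type $2$ gradings cannot occur in characteristic $2$.
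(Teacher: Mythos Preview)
Your argument is correct, and it reaches the same conclusion by a route genuinely different from the paper's.

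For part (a), the paper isolates the span of $e_{11},e_{nn}$ (modulo $J$ and $\mathrm I$) through a two-step centralizer construction: it sets $\mathcal S=C_{\mathrm{UT}_n^{(-)}}(J^{n-2})$, then $\mathcal T=C_{\mathrm{UT}_n^{(-)}}(\mathcal S/J^2)$, and checks by hand that $(\mathcal T+J)/J=\mathrm{Span}\{e_{11}+J,e_{nn}+J,\mathrm I+J\}$. You instead work entirely in the quotient picture: the right radical of the graded pairing $(J^{n-2}/J^{n-1})\times(J/J^2)\to J^{n-1}$ is the interior slice $M^\circ=\mathrm{Span}\{f_2,\dots,f_{n-2}\}$, and its annihilator in $A$ is exactly $\mathrm{Span}\{\bar e_{11},\bar e_{nn}\}$. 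The two constructions are morally dual---centralizing $J^{n-2}$ versus taking a radical against $J^{n-2}/J^{n-1}$---but yours is packaged more cleanly inside the single module $M=J/J^2$.

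For part (b) the difference is sharper. The paper first singles out $e_{11}+e_{nn}$ modulo $\mathrm I$ as the centralizer of $J^{n-1}$ inside $\mathcal A=(\mathcal T+J)/J$, then picks a homogeneous $z$ not killing $e_{1n}$ to get a degree-$1$ element, and finally computes on $J^{n-2}/J^{n-1}$ to force $g^2=1$ and to show that $z$ is (the image of) a multiple of $e_{11}-e_{nn}$. Your observation that the image $W=\mathrm{Span}\{E_1,E_{n-1}\}\subset\mathrm{End}(M)$ is a graded \emph{associative} subalgebra isomorphic to $\mathbb F\times\mathbb F$ lets you read off both facts at once: the unit $E_1+E_{n-1}$, which is the image of $\bar e_{11}-\bar e_{nn}$, must lie in the neutral component, and the absence of nilpotents in $\mathbb F\times\mathbb F$ forces any nontrivial homogeneous component to be spanned by $E_1-E_{n-1}$ with degree squaring to $1$. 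This also yields $g=1$ in characteristic $2$ for free, which the paper establishes separately in the subsequent lemma. Your approach to (b) is more conceptual and uniform; the paper's is more hands-on and stays closer to the concrete matrix computations used later in the classification.
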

\begin{proof}
Let, in the notation of  Section \ref{sect23},
\begin{align*}
\mathcal{S}&=C_{\mathrm{UT}_n^{(-)}}(J^{n-2})\\%
&=\left\{x\in\mathrm{UT}_n^{(-)}\mid(x)_{12}=(x)_{n-1,n}=0,\,(x)_{11}=(x)_{22}=(x)_{n-1,n-1}=(x)_{nn}\right\}.
\end{align*}
Then $\mathcal{V}=\mathcal{S}/J^2$ is a graded $\mathrm{UT}_n^{(-)}$-module. Let
\begin{align*}
\mathcal{T}&=C_{\mathrm{UT}_n^{(-)}}(\mathcal{V})\\%
&=\left\{x\in\mathrm{UT}_n^{(-)}\mid(x)_{22}=\cdots=(x)_{n-1,n-1},\,(x)_{i,i+1}=0,i=1,\ldots,n-1\right\}.
\end{align*}

Thus, $(\mathcal{T}+J)/J=\mathrm{Span}\{e_{11}+J,e_{nn}+J,\mathrm{I}+J\}$. Now, let $\mathcal{A}=(\mathcal{T}+J)/J$. Then $J^{n-1}$ is a graded $\mathcal{A}$-module. Note that
\[
\mathcal{A}':=\mathrm{C}_\mathcal{A}\,J^{n-1}=\mathrm{Span}\{\mathrm{I}+J,e_{11}+e_{nn}+J\},
\]
so $e_{11}+e_{nn}+J$ is homogeneous in $\mathcal{A}$ (up to adding a multiple of $\mathrm{I}+J$). Since $\dim\mathcal{A}=3>\dim\mathcal{A}'$, there must exist a homogeneous element $z\in\mathcal{A}$ such that $[z,e_{1n}]\ne0$. This also implies $\deg z=1$. If $\deg (e_{11}+e_{nn}+J)=1$ then we may consider a linear combination of this element and $z$ to prove the lemma. Thus, we may suppose $\deg (e_{11}+e_{nn}+J)\ne 1$, and we shall prove that $z$ is a (linear combination of $\mathrm{I}$, elements of $J$ and of a) multiple of $e_{11}-e_{nn}+J$.

Now $J^{n-2}/J^{n-1}$ is a graded $\mathcal{A}$-module of $\mathbb{F}$-dimension 2. Let $a\in J^{n-2}$ be a homogeneous element having a non-zero image in $J^{n-2}/J^{n-1}$. Note that
\[
[a+J^{n-1},e_{11}+e_{nn}+J,e_{11}+e_{nn}+J]=a+J^{n-1}.
\]
This shows that $(\deg(e_{11}+e_{nn}+J))^2=1$. Since we are supposing $\deg (e_{11}+e_{nn}+J)\ne1$, then necessarily $(a)_{1,n-1}\ne0$ and $(a)_{2,n}\ne0$. Thus a homogeneous $\mathbb{F}$-basis of $J^{n-2}/J^{n-1}$ consists of two elements of distinct degrees. So, since $\deg z=1$, $[z,a]$ must be a multiple of $a$. Thus $(z)_{1,n-1}=-(z)_{2,n}$. Hence $z$ is the image of a multiple of $e_{11}-e_{nn}$. We proved all the statements of the lemma.
\end{proof}

As we shall see, if $g=1$ then we obtain a practical elementary grading, while if $g\ne1$, then we shall obtain a practical type 2 grading.
\begin{Lemma}\label{division_char2}
If $\mathrm{char}\,\mathbb{F}=2$, then, in the notation of Lemma \ref{maindivision}, $g=1$.
\end{Lemma}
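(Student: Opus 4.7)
The plan is to read the conclusion off directly from Lemma \ref{maindivision} by exploiting the collapse of signs in characteristic $2$. First I would invoke Lemma \ref{maindivision} to obtain two pieces of homogeneity information in the quotient $\mathrm{UT}_n^{(-)}/(J+\mathbb{F}\mathrm{I})$: the coset represented by $e_{11}-e_{nn}$ is homogeneous of degree $1$, and the coset represented by $e_{11}+e_{nn}$ is homogeneous of degree $g$ with $g^2 = 1$.

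Next I would observe that in characteristic $2$ the identity $-1 = 1$ holds, so the matrices $e_{11}-e_{nn}$ and $e_{11}+e_{nn}$ are literally equal, and therefore the two cosets above coincide in the graded quotient. Since a nonzero element of a graded vector space carries at most one homogeneous degree, the two assignments of degree must agree, and this forces $g = 1$, which is the claim.

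I do not expect any real technical obstacle: the content is entirely absorbed into Lemma \ref{maindivision}, and the reduction in characteristic $2$ is purely formal. The only thing I would want to sanity-check when writing out the argument is that Lemma \ref{maindivision} itself is applicable in characteristic $2$; inspecting its proof, the identity $[a, e_{11}+e_{nn}, e_{11}+e_{nn}] = a$ in $J^{n-2}/J^{n-1}$ used to derive $g^2 = 1$ uses no division by $2$, and the branch ``$\deg(e_{11}+e_{nn}+J) \neq 1$'' of that argument cannot arise in characteristic $2$ anyway, since there the degree-$1$ witness $z$ would have to be a multiple of $e_{11}-e_{nn} = e_{11}+e_{nn}$ with incompatible degree. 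Thus one automatically lands in the easy branch of Lemma \ref{maindivision}, and $g = 1$.
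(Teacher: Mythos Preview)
Your argument is correct and in fact more direct than the paper's. The paper re-enters the proof machinery of Lemma~\ref{maindivision}: it takes the graded subspace $\mathcal{A}'=\mathrm{Span}\{\mathrm{I}+J,\,e_{11}+e_{nn}+J\}$, looks at its action on $J^{n-2}/J^{n-1}$, picks a homogeneous $z'=\mu_1(e_{11}+e_{nn})+\mu_2\mathrm{I}+J$ with $\mu_1\ne0$, and uses the characteristic-$2$ computation $[e_{11}+e_{nn},v]=v$ on $J^{n-2}/J^{n-1}$ to force $\deg z'=1$, hence $g=1$. You instead read the conclusion straight off the \emph{statement} of Lemma~\ref{maindivision}: since $e_{11}-e_{nn}=e_{11}+e_{nn}$ in characteristic $2$, the two cosets in $\mathrm{UT}_n^{(-)}/(J+\mathbb{F}\mathrm{I})$ coincide, and as this coset is nonzero (for $n>2$ it is diagonal and not a scalar), uniqueness of degree for a nonzero homogeneous element gives $1=g$. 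Your route is shorter and avoids reopening the centralizer computations; the paper's route has the minor advantage of exhibiting explicitly which step is characteristic-sensitive. Your sanity check that Lemma~\ref{maindivision} holds in characteristic $2$ is prudent but not formally required, since the paper states that lemma without any hypothesis on the characteristic.
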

\begin{proof}
We shall use the notation of the beginning of the proof of Lemma \ref{maindivision} (where the argument is characteristic-free).

Let $\mathcal{A}=(\mathcal{T}+J)/J$ and
\[
\mathcal{A}':=\mathrm{C}_\mathcal{A}\,J^{n-1}=\mathrm{Span}\{\mathrm{I}+J,e_{11}+e_{nn}+J\}.
\]
As we proved earlier, there exists a homogeneous element of degree $1$ in $\mathcal{A}$.

Now, $J^{n-2}/J^{n-1}$ is a graded $\mathcal{A}$-module. Thus $J^{n-2}/J^{n-1}$ is a graded $\mathcal{A}'$-module as well. Moreover $\mathrm{C}_{\mathcal{A}'}\left(J^{n-2}/J^{n-1}\right)=\mathrm{Span}\{\mathrm{I}+J\}$. So we may find a homogeneous element $z'\in\mathcal{A}'$ such that $z'\notin\mathrm{C}_{\mathcal{A}'}\left(J^{n-2}/J^{n-1}\right)$. Thus
\[
z'=\mu_1(e_{11}+e_{nn})+\mu_2\mathrm{I}+J,\quad\mu_1\ne0.
\]
Therefore $[z',v]\ne0$ for any $0\ne v\in J^{n-2}/J^{n-1}$. This readily implies $\deg z'=1$. Hence we conclude the proof of the lemma.
\end{proof}

\subsection{} We shall investigate the case when $g=1$ in Lemma \ref{maindivision}. Let
\begin{align*}
I_1&=\{x\in\mathrm{UT}_n^{(-)}\mid(x)_{ij}=0\textrm{ if }i\ne1\}.
\end{align*}
If $g=1$, then Lemma \ref{maindivision} says that $e_{11}+J+\mathbb{F}\mathrm{I}$ is a homogeneous element of degree $1$ in $(\mathcal{T}+J)/(J+\mathbb{F}\mathrm{I})$. This means that there exists $r\in J$ such that $e_{11}+r$ is semihomogeneous of degree $1$.

Let $f\colon \mathrm{UT}_n^{(-)}\to\mathrm{UT}_n^{(-)}$ be defined by $f=\mathrm{ad}(e_{11}+r)^n$, where $\mathrm{ad}(e_{11}+r)$ is the linear transformation defined by $\mathrm{ad}(e_{11}+r)(a)=[e_{11}+r,a]$.

\begin{Lemma}\label{graded_ideal}
The map $f$ is a $G$-graded linear map and $f(\mathrm{UT}_n^{(-)})=I_1\cap J$.
\end{Lemma}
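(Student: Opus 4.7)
The plan is to handle the two claims separately: that $f$ is $G$-graded, and that $f(\mathrm{UT}_n^{(-)})=I_1\cap J$. For the first, since $e_{11}+r$ is semihomogeneous of degree $1$, we may write $e_{11}+r=y+\lambda\mathrm{I}$ with $y\in(\mathrm{UT}_n^{(-)})_1$ and $\lambda\in\mathbb{F}$. As $\mathrm{I}$ is central, $\mathrm{ad}(e_{11}+r)=\mathrm{ad}(y)$ is a linear endomorphism of degree $1$ (it sends each $(\mathrm{UT}_n^{(-)})_g$ to itself), and hence so is $f=\mathrm{ad}(y)^n$.

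For the image, set $a:=e_{11}+r$ and split $r=r_1+r'$, where $r_1:=\sum_{l>1}r_{1l}e_{1l}\in I_1\cap J$ collects the first-row entries of $r$, and $r':=\sum_{2\le k<l}r_{kl}e_{kl}$ belongs to the Lie subalgebra $\mathrm{UT}':=\mathrm{Span}\{e_{ij}\mid 2\le i\le j\le n\}\cong\mathrm{UT}_{n-1}^{(-)}$. This produces the vector space decomposition
\[
\mathrm{UT}_n^{(-)}=\mathbb{F}e_{11}\oplus\mathrm{UT}'\oplus(I_1\cap J).
\]
A direct computation gives $[a,e_{1j}]=e_{1j}-\sum_{l>j}r_{jl}e_{1l}$, so in the basis $(e_{12},\ldots,e_{1n})$ of $I_1\cap J$ the operator $\mathrm{ad}(a)|_{I_1\cap J}$ is the identity minus a strictly upper triangular matrix, hence unipotent. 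This already gives $\mathrm{ad}(a)^n(I_1\cap J)=I_1\cap J$, so $I_1\cap J\subseteq f(\mathrm{UT}_n^{(-)})$.

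For the reverse inclusion, I would argue modulo $I_1\cap J$. One has $[a,e_{11}]=-r_1\in I_1\cap J$; and for $X\in\mathrm{UT}'$, the decomposition $[a,X]=[e_{11},X]+[r_1,X]+[r',X]$, together with $[e_{11},X]=0$, $[r_1,X]\in I_1\cap J$, and $[r',X]\in\mathrm{UT}'$, yields $[a,X]\equiv[r',X]\pmod{I_1\cap J}$. Iterating, $\mathrm{ad}(a)^k(X)\equiv\mathrm{ad}(r')^k(X)\pmod{I_1\cap J}$ for each $X\in\mathrm{UT}'$, so it suffices to prove that $\mathrm{ad}(r')^{n-1}=0$ on $\mathrm{UT}'$. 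This sharp nilpotency estimate is the main obstacle: the crude binomial expansion $\mathrm{ad}(r')^k=\sum_j(-1)^j\binom{k}{j}(r')^{k-j}(\cdot)(r')^j$ only forces vanishing from $k=2n-3$ on, which is too weak. The sharp bound comes from the associative filtration of $\mathrm{UT}'$ by the powers of its nilpotent radical $N':=\mathrm{Span}\{e_{ij}\mid 2\le i<j\le n\}$, for which $(N')^{n-1}=0$; since $r'\in N'$, both $r'X$ and $Xr'$ lie at least one filtration step deeper than $X$, so each application of $\mathrm{ad}(r')$ strictly raises the filtration level, landing in $(N')^{n-1}=0$ after $n-1$ iterations.
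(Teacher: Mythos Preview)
Your proof is correct. Both claims are established, and the filtration argument for $\mathrm{ad}(r')^{n-1}=0$ on $\mathrm{UT}'$ is the right fix for the weakness of the naive binomial bound.

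Your route differs from the paper's for the inclusion $f(\mathrm{UT}_n^{(-)})\subseteq I_1\cap J$. The paper does not split $\mathrm{UT}_n^{(-)}$ into $\mathbb{F}e_{11}\oplus\mathrm{UT}'\oplus(I_1\cap J)$; instead it runs a single induction on $m$ showing $\mathrm{ad}(e_{11}+r)^m(\mathrm{UT}_n^{(-)})\subseteq(I_1\cap J)+J^m$, using only that $I_1\cap J$ is a Lie ideal together with $[e_{11},J^m]\subseteq I_1\cap J$ and $[r,J^m]\subseteq J^{m+1}$; taking $m=n$ gives the containment in one stroke. Your argument reaches the same conclusion by first passing to the quotient modulo $I_1\cap J$ and then invoking the associative filtration $(N')^k$ on the smaller algebra $\mathrm{UT}'$. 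The paper's version is shorter and avoids the block decomposition and the separate treatment of $e_{11}$; your version, on the other hand, makes the mechanism transparent (the ``diagonal'' part $e_{11}$ of $a$ acts as the identity on $I_1\cap J$ and kills $\mathrm{UT}'$, while the strictly upper part $r$ only pushes things deeper), and it yields the reverse inclusion $I_1\cap J\subseteq f(\mathrm{UT}_n^{(-)})$ immediately from the unipotence of $\mathrm{ad}(a)|_{I_1\cap J}$, which is essentially the same observation the paper uses when it writes $f(e_{1m})=e_{1m}+u$.
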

\begin{proof}
Since $e_{11}+r$ is a semihomogeneous element of degree $1$ and the product is bilinear, the map $f$ is a $G$-graded linear map. Now note that $I_1\cap J$ is an ideal. Then an inductive argument shows that
\[
\mathrm{ad}(e_{11}+r)^m(\mathrm{UT}_n^{(-)})\subseteq I_1\cap J+J^m,\quad m>0.
\]
Therefore $f(\mathrm{UT}_n^{(-)})\subseteq I_1\cap J+J^n= I_1\cap J$. Conversely, for every $m>1$, we obtain $f(e_{1m})=e_{1m}+u$, for some $u\in I_1\cap J^{m+1}$. This proves that $I_1\cap J\subseteq f(\mathrm{UT}_n^{(-)})$.
\end{proof}

As a consequence of Lemma \ref{graded_ideal}, $I_1\cap J$ is a graded ideal of $\mathrm{UT}_n^{(-)}$. It follows that $\mathcal{B}:=\mathrm{UT}_n^{(-)}/I_1\cap J$ is a graded algebra, and
\[
C_\mathcal{B}\,\mathcal{B}=\mathrm{Span}\{\mathrm{I}+I_1\cap J,e_{11}+I_1\cap J\}.
\]
So, taking an inverse image, we obtain the proof of the following statement.
\begin{Lemma}
If $g=1$ in Lemma \ref{maindivision}, then the subspace $I_1\oplus\mathbb{F}\mathrm{I}$ is always a graded subspace of $\mathrm{UT}_n^{(-)}$.\qed
\end{Lemma}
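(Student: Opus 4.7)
My plan is to use the principle that the center of any graded algebra is a graded subspace, combined with the fact that preimages of graded subspaces under graded surjections are graded. By Lemma \ref{graded_ideal}, the subspace $I_1\cap J$ is a graded ideal, so the canonical projection $\pi\colon \mathrm{UT}_n^{(-)}\to\mathcal{B}$ onto $\mathcal{B}:=\mathrm{UT}_n^{(-)}/(I_1\cap J)$ is a graded homomorphism. The Proposition on centralizers in Section \ref{sect23} guarantees that $\mathfrak{z}(\mathcal{B}) = \mathrm{C}_\mathcal{B}\,\mathcal{B}$ is a graded subspace of $\mathcal{B}$, and the displayed computation preceding the statement has already identified it as $\mathrm{Span}\{\mathrm{I}+I_1\cap J,\, e_{11}+I_1\cap J\}$.

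The next step is to pull this subspace back to $\mathrm{UT}_n^{(-)}$ via $\pi$. The preimage is $\mathbb{F}\mathrm{I}+\mathbb{F}e_{11}+(I_1\cap J)$, which collapses to $I_1+\mathbb{F}\mathrm{I}$ upon using the evident decomposition $I_1 = \mathbb{F}e_{11}\oplus(I_1\cap J)$. A one-line check that $\mathrm{I}\notin I_1$ --- the identity has nonzero diagonal entries outside the first row --- upgrades the sum to a direct one, yielding $I_1\oplus\mathbb{F}\mathrm{I}$. Since preimages of graded subspaces are graded, this is exactly what is required.

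The main point demanding care is the bookkeeping match between the preimage and $I_1\oplus\mathbb{F}\mathrm{I}$; beyond that, no real obstacle arises, as the heavy lifting --- the graded-ideal property of $I_1\cap J$ and the explicit identification of $\mathfrak{z}(\mathcal{B})$ --- has been completed in the preceding two lemmas.
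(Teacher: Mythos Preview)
Your proof is correct and follows essentially the same approach as the paper: the paper's argument (given in the paragraph immediately preceding the lemma) is precisely to observe that $I_1\cap J$ is a graded ideal by Lemma~\ref{graded_ideal}, identify the centre of the quotient $\mathcal{B}=\mathrm{UT}_n^{(-)}/(I_1\cap J)$ as $\mathrm{Span}\{\mathrm{I}+I_1\cap J,\,e_{11}+I_1\cap J\}$, and then take the inverse image. You have simply made the inverse-image step and the direct-sum verification more explicit than the paper does.
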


Now let $x\in I_1\oplus\mathbb{F}\mathrm{I}$ be a homogeneous element with $(x)_{11}=1$ and $(x)_{nn}\ne1$. Such element exists, because $e_{11}\in I_1\oplus\mathbb{F}\mathrm{I}$. Since $[x,e_{1n}]=((x)_{11}-(x)_{nn})e_{1n}\ne0$, we see that $\deg x=1$. This implies that there exists $r_0\in I_1\cap J$ such that $e_{11}+r_0$ is semihomogeneous of degree $1$. Now the minimal polynomial of $e_{11}+r_0$ (viewed as a matrix) is $X(X-1)$, therefore $e_{11}+r_0$ is diagonalizable 
(indeed, $e_{11}+r_0$ is an idempotent matrix hence diagonalizable). The conjugation that maps $e_{11}+r_0\mapsto e_{11}$ induces a new $G$-grading on $\mathrm{UT}_n^{(-)}$, isomorphic to the original one. Hence we proved the following lemma.
\begin{Lemma}\label{e_homogeneous}
If $g=1$ in Lemma \ref{maindivision}, then there exists an isomorphic $G$-grading on $\mathrm{UT}_n^{(-)}$ where $e_{11}$ is semihomogeneous of degree $1$.\qed
\end{Lemma}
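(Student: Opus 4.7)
The plan is to leverage the preceding lemma, which asserts that $I_1\oplus\mathbb{F}\mathrm{I}$ is a graded subspace of $\mathrm{UT}_n^{(-)}$, in order to extract a semihomogeneous element of the shape $e_{11}+r_0$ (with $r_0\in I_1\cap J$) of degree $1$. Once this is in hand, the key observation is that $e_{11}+r_0$ must be a rank-one idempotent, hence conjugate to $e_{11}$ by an element of $\mathrm{UT}_n$. The associated inner automorphism of $\mathrm{UT}_n^{(-)}$ will then produce the desired isomorphic $G$-grading in which $e_{11}$ itself is semihomogeneous of degree $1$.

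The first step is to decompose $e_{11}$ into its homogeneous components inside the graded space $I_1\oplus\mathbb{F}\mathrm{I}=\mathbb{F}e_{11}\oplus(I_1\cap J)\oplus\mathbb{F}\mathrm{I}$. Writing $e_{11}=\sum_{g\in G}x_g$, each $x_g$ has the form $\alpha_g e_{11}+s_g+\beta_g\mathrm{I}$ with $s_g\in I_1\cap J$. Comparing the $(1,1)$ and $(n,n)$ entries gives $\sum_g\alpha_g=1$ and $\sum_g\beta_g=0$, so some $x_g$ satisfies $\alpha_g\ne 0$; after rescaling we may assume $\alpha_g=1$. Since $e_{1n}$ is homogeneous (as $J^{n-1}=\mathbb{F}e_{1n}$ is a one-dimensional graded subspace) and $[x_g,e_{1n}]=\alpha_g\,e_{1n}\ne 0$, we conclude $\deg x_g=1$. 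Thus $e_{11}+s_g=x_g-\beta_g\mathrm{I}$ is semihomogeneous of degree $1$.

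The second step is to verify that $e_{11}+s_g$ is idempotent. Writing $s_g=\sum_{j\ge 2}c_j e_{1j}$, a direct computation with matrix units gives $e_{11}s_g=s_g$, $s_g e_{11}=0$, and $s_g^2=0$, whence $(e_{11}+s_g)^2=e_{11}+s_g$. Its minimal polynomial is therefore $X(X-1)$, and one checks that the unipotent upper triangular matrix with entries $-c_j$ in position $(1,j)$ (for $j\ge 2$) conjugates $e_{11}+s_g$ to $e_{11}$. This conjugation is an inner automorphism $\psi$ of $\mathrm{UT}_n^{(-)}$, which, as discussed in the preliminaries, induces a new $G$-grading on $\mathrm{UT}_n^{(-)}$ isomorphic to the original. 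In the new grading, $\psi(x_g)=e_{11}+\beta_g\mathrm{I}$ is homogeneous of degree $1$, so $e_{11}$ is semihomogeneous of degree $1$.

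No genuine obstacle arises: the decomposition is forced by the gradedness of $I_1\oplus\mathbb{F}\mathrm{I}$, and the idempotency is a mechanical check. The only point requiring care is to realise the conjugation inside $\mathrm{UT}_n$, so that it yields an inner Lie automorphism; this is automatic from the explicit upper triangular form of the conjugating matrix.
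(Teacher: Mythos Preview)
Your argument is correct and follows essentially the same route as the paper: both proofs use the gradedness of $I_1\oplus\mathbb{F}\mathrm{I}$ to extract a homogeneous element $x=e_{11}+r_0+\beta\mathrm{I}$ of degree $1$ (via the bracket with $e_{1n}$), observe that $e_{11}+r_0$ is idempotent with minimal polynomial $X(X-1)$, and then conjugate it to $e_{11}$ by an invertible upper triangular matrix. One tiny slip: the matrix $\mathrm{I}-s_g$ you wrote down actually conjugates $e_{11}+s_g$ to $e_{11}+2s_g$; the correct conjugating matrix is $\mathrm{I}+s_g$, but this does not affect the argument.
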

From now on, let us assume that $e_{11}$ is semihomogeneous of degree $1$. Let
\[
\mathcal{C}=(1-\mathrm{ad}\,e_{11})\mathrm{UT}_n^{(-)}:=\left\{x-[e_{11},x]\mid x\in\mathrm{UT}_n^{(-)}\right\}.
\]
Then $\mathcal{C}$ is a graded subalgebra; it coincides with the direct sum 
\[
\{x\in\mathrm{UT}_n^{(-)}\mid(x)_{1j}=0,\textrm{ for every } j=1,\ldots,n\}\oplus\mathbb{F}\mathrm{I}.
\] 
Also, $\mathcal{C}\cong\mathrm{UT}_{n-1}^{(-)}\oplus\mathbb{F}\mathrm{I}$. We prove the following lemma.
\begin{Lemma}\label{existence_semihomogeneous}
If $g=1$ in Lemma \ref{maindivision}, then there exists an isomorphic $G$-grading on $\mathrm{UT}_n^{(-)}$ where $e_{11},e_{22},\ldots,e_{nn}$ are semihomogeneous of degree $1$.
\end{Lemma}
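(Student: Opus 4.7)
I would proceed by induction on $n$. The base case $n=2$ is immediate: after Lemma \ref{e_homogeneous} renders $e_{11}$ semihomogeneous of degree $1$, the relation $\mathrm{I}=e_{11}+e_{22}$ forces $e_{22}=\mathrm{I}-e_{11}$ to be the difference of a central element and a semihomogeneous element of degree $1$, and hence itself semihomogeneous of degree $1$. For the inductive step with $n\ge 3$, I would first invoke Lemma \ref{e_homogeneous} so that $e_{11}$ is semihomogeneous of degree $1$, and then study the graded subalgebra $\mathcal{C}$ introduced above.

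The main setup is an explicit Lie algebra isomorphism $\psi\colon\mathcal{C}/\mathbb{F}\mathrm{I}\to\mathrm{UT}_{n-1}^{(-)}$ defined by $\psi(\overline{ae_{11}+M})=M-a\mathrm{I}_{n-1}$, where $M$ runs over the lower-right $(n-1)\times(n-1)$ block and $\mathrm{I}_{n-1}$ denotes the identity of $\mathrm{UT}_{n-1}^{(-)}$. This transfers the quotient grading on $\mathcal{C}/\mathbb{F}\mathrm{I}$ to a grading $\bar\Gamma$ on $\mathrm{UT}_{n-1}^{(-)}$ under which $\psi(\overline{e_{ii}})=\tilde{e}_{i-1,i-1}$ for $i>1$, while $\psi(\overline{e_{11}})=-\mathrm{I}_{n-1}$. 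Since $\overline{e_{11}}$ is homogeneous of degree $1$ in $\mathcal{C}/\mathbb{F}\mathrm{I}$, the identity $\mathrm{I}_{n-1}$ is homogeneous of degree $1$ in $\bar\Gamma$, which amounts to saying that $\mathrm{I}-e_{11}$ is semihomogeneous of degree $1$ in the original grading of $\mathrm{UT}_n^{(-)}$.

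The critical technical step is to verify that $\bar\Gamma$ also falls in the case $\tilde{g}=1$ of Lemma \ref{maindivision}, so that the inductive hypothesis is applicable. From $g=1$ combined with the semihomogeneity of $e_{11}$, one deduces that $e_{nn}\in(\mathrm{UT}_n^{(-)})_1+J+\mathbb{F}\mathrm{I}$; projecting this decomposition along the degree-preserving idempotent $1-\mathrm{ad}\,e_{11}$ whose image is $\mathcal{C}$ yields $e_{nn}\in\mathcal{C}_1+(J\cap\mathcal{C})+\mathbb{F}\mathrm{I}$, and applying $\psi$ shows $\tilde{e}_{n-1,n-1}\in\bar\Gamma_1+\tilde{J}$. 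Combined with Lemma \ref{maindivision} applied to $(\mathrm{UT}_{n-1}^{(-)},\bar\Gamma)$, which yields $\tilde{e}_{11}-\tilde{e}_{n-1,n-1}$ in degree $1$ modulo $\tilde{J}+\mathbb{F}\mathrm{I}_{n-1}$, this forces $\tilde{e}_{11}+\tilde{e}_{n-1,n-1}$ to have degree $1$ modulo $\tilde{J}+\mathbb{F}\mathrm{I}_{n-1}$, i.e.\ $\tilde{g}=1$.

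With $\tilde{g}=1$ established, the inductive hypothesis furnishes an inner automorphism of $\mathrm{UT}_{n-1}^{(-)}$ realized by conjugation by some invertible upper triangular matrix $Q$, after which every $\tilde{e}_{jj}$ is semihomogeneous of degree $1$. Lifting to conjugation by the invertible upper triangular matrix $P=\mathrm{diag}(1,Q)$ produces an inner automorphism of $\mathrm{UT}_n^{(-)}$ that fixes $e_{11}$ and acts as $Q$ on the lower-right block. Semihomogeneity of degree $1$ in $\bar\Gamma$ pulls back to semihomogeneity of degree $1$ in the original grading of $\mathrm{UT}_n^{(-)}$ thanks to the semihomogeneity of $\mathrm{I}-e_{11}$ noted earlier; consequently each $e_{ii}$ with $i>1$ becomes semihomogeneous of degree $1$ in the conjugated grading, while $e_{11}$ is unchanged. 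The main obstacle I anticipate is precisely the step verifying $\tilde{g}=1$ for the inherited quotient, which demands careful bookkeeping of the two central directions of $\mathcal{C}$, namely $\mathbb{F}\mathrm{I}$ and $\mathbb{F}(\mathrm{I}-e_{11})$, against the single central direction $\mathbb{F}\mathrm{I}$ that governs semihomogeneity in $\mathrm{UT}_n^{(-)}$.
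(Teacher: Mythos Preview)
Your approach is essentially the paper's: both pass to the graded subalgebra $\mathcal{C}\cong\mathrm{UT}_{n-1}^{(-)}\oplus\mathbb{F}\mathrm{I}$ and argue inductively, using that $\mathfrak{z}(\mathcal{C})=\mathrm{Span}\{e_{11},\mathrm{I}\}$ together with the already-established semihomogeneity of $e_{11}$ to promote semihomogeneity relative to $\mathcal{C}$ back to semihomogeneity in $\mathrm{UT}_n^{(-)}$. Your explicit verification that the induced grading on $\mathrm{UT}_{n-1}^{(-)}$ again has $\tilde g=1$ in fact fills in what the paper's terse ``repeating the steps of Lemma~\ref{e_homogeneous}'' leaves implicit; just note two small bookkeeping points: that check via Lemma~\ref{maindivision} requires $n-1>2$ (for $n=3$ your base case applies directly, and for $n=2$ one should cite Lemma~\ref{ut2} rather than Lemma~\ref{e_homogeneous}), and your inductive hypothesis must be strengthened to record that the isomorphism is realized by a conjugation, which the statement of the lemma does not assert even though its proof makes it clear.
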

\begin{proof}
Repeating the steps of the proof of Lemma \ref{e_homogeneous}, we see that $e_{22}$ is semihomogeneous of degree $1$ in $\mathcal{C}$. However, $\mathfrak{z}\left(\mathcal{C}\right)=\mathrm{Span}\{e_{11},\mathrm{I}\}$. So, $e_{22}+\lambda_1e_{11}+\lambda_2\mathrm{I}$ is homogeneous of degree $1$, for some $\lambda_1$, $\lambda_2\in\mathbb{F}$. Since $e_{11}$ is semihomogeneous of degree $1$, we get that $e_{22}$ is semihomogeneous of degree $1$ in $\mathrm{UT}_n^{(-)}$. Thus, an inductive argument completes the proof.
\end{proof}

\subsection{} Now we shall investigate the case where $g\ne1$ in Lemma \ref{maindivision}. By Lemma \ref{division_char2}, we shall assume that $\mathrm{char}\,\mathbb{F}\ne2$. We observe that the arguments presented here could also be used in order to re-obtain the results of the case $g=1$ if $\mathrm{char}\,\mathbb{F}\ne2$.

Recall that $(x)_{ij}$ stands for the entry $(ij)$ of the matrix $x$. Let
\begin{align*}
D_1&=\left\{x\in\mathrm{UT}_n\mid(x)_{ij}=0\text{ if $i\ne1$ and $j\ne n$}\right\}.
\end{align*}
\begin{Lemma}
The subspace $D_1\oplus\mathbb{F}\mathrm{I}$ is a graded subspace. Moreover, the images of the elements $e_{11}\pm e_{nn}$ are homogeneous in $\mathrm{UT}_n^{(-)}/((D_1\cap J)\oplus\mathbb{F}\mathrm{I})$. Also
\[
\deg (e_{11}-e_{nn}+(D_1\cap J)\oplus\mathbb{F}\mathrm{I})=\left(\deg e_{11}+e_{nn}+(D_1\cap J)\oplus\mathbb{F}\mathrm{I}\right)^2=1.
\]
\end{Lemma}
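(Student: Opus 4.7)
The plan is to establish the statement in three stages. First, I will show that $D_1 \cap J$ is a graded subspace. By Lemma~\ref{maindivision}, there exists a homogeneous element $y_1 \in \mathrm{UT}_n^{(-)}$ of degree $1$ representing the class $e_{11} - e_{nn} + J + \mathbb{F}\mathrm{I}$; write $y_1 = e_{11} - e_{nn} + r + c\mathrm{I}$ with $r \in J$ and $c \in \mathbb{F}$. The adjoint $\mathrm{ad}(y_1)$ is then a degree-preserving graded endomorphism of $\mathrm{UT}_n^{(-)}$, so its generalised eigenspaces are graded subspaces. A direct calculation gives the eigenvalues of $\mathrm{ad}(e_{11} - e_{nn})$ on the matrix units as $0$, $1$, $2$, with $2$-eigenspace $\mathbb{F}e_{1n}$ and $1$-eigenspace spanned by $\{e_{1j}: 2\le j\le n-1\} \cup \{e_{in}: 2\le i\le n-1\}$; since $\mathrm{ad}(r)$ is nilpotent and strictly raises the $J$-depth, the generalised eigenspace of $\mathrm{ad}(y_1)$ for the nonzero eigenvalues has dimension $2n-3 = \dim(D_1 \cap J)$.

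The central technical step is to prove that this generalised eigenspace is contained in $D_1 \cap J$. Set $R = \mathrm{Span}\{e_{ij} \mid 2 \le i < j \le n-1\}$, so that $J = (D_1 \cap J) \oplus R$, and define the descending filtration $F^m := (D_1 \cap J) + (R \cap J^m)$, for which $F^1 = J$ and $F^{n-2} = D_1 \cap J$ (since $R \cap J^{n-2} = 0$). Using the formula $[r, e_{ij}] = \sum_{k<i} r_{ki}\, e_{kj} - \sum_{l>j} r_{jl}\, e_{il}$, one checks directly that $\mathrm{ad}(y_1) F^m \subseteq F^{m+1}$: on $D_1 \cap J$ the image stays in $D_1 \cap J$, while on a middle $e_{ij}$ (with $i \ne 1$ and $j \ne n$) the image $[r, e_{ij}]$ splits into a first-row/last-column part (in $D_1 \cap J$) and a middle part of strictly larger $J$-depth. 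Combined with $\mathrm{ad}(y_1)\,\mathrm{UT}_n^{(-)} \subseteq J$, iterating yields $\mathrm{ad}(y_1)^{n-2}\,\mathrm{UT}_n^{(-)} \subseteq D_1 \cap J$, hence the generalised $\{1,2\}$-eigenspace lies in $D_1 \cap J$; a dimension count then forces equality, so $D_1 \cap J$ is graded.

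Next, $D_1 + \mathbb{F}\mathrm{I}$ will be shown to be graded by identifying it as the preimage of the centre of $\mathrm{UT}_n^{(-)}/(D_1 \cap J)$. The containments $[e_{11}, \mathrm{UT}_n^{(-)}] \subseteq D_1 \cap J$ and $[e_{nn}, \mathrm{UT}_n^{(-)}] \subseteq D_1 \cap J$, together with the centrality of $\mathrm{I}$, show that $D_1 + \mathbb{F}\mathrm{I}$ maps into this centre. Conversely, writing an arbitrary $v = \sum_i a_i e_{ii} + \sum_{i<j} a_{ij} e_{ij}$ and imposing $[v, e_{kk}],\,[v, e_{k,k+1}] \in D_1 \cap J$ for every $k$ forces $a_{ij} = 0$ for $2 \le i < j \le n-1$ and $a_2 = a_3 = \cdots = a_{n-1}$, so $v \in D_1 + \mathbb{F}\mathrm{I}$. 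Hence $\mathfrak{z}(\mathrm{UT}_n^{(-)}/(D_1 \cap J)) = (D_1 + \mathbb{F}\mathrm{I})/(D_1 \cap J)$; since the centre of a graded Lie algebra is a graded subspace, its preimage $D_1 + \mathbb{F}\mathrm{I}$ is graded in $\mathrm{UT}_n^{(-)}$.

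Finally, the graded inclusion $(D_1 \cap J) \oplus \mathbb{F}\mathrm{I} \subseteq J + \mathbb{F}\mathrm{I}$ induces a graded surjection $\pi\colon \mathrm{UT}_n^{(-)}/((D_1 \cap J) \oplus \mathbb{F}\mathrm{I}) \twoheadrightarrow \mathrm{UT}_n^{(-)}/(J + \mathbb{F}\mathrm{I})$. One verifies $(D_1 + \mathbb{F}\mathrm{I}) \cap (J + \mathbb{F}\mathrm{I}) = (D_1 \cap J) \oplus \mathbb{F}\mathrm{I}$, so the restriction of $\pi$ to the $2$-dimensional graded subspace $(D_1 + \mathbb{F}\mathrm{I})/((D_1 \cap J) \oplus \mathbb{F}\mathrm{I})$ is an injective graded map with image $\mathrm{Span}\{e_{11} + J + \mathbb{F}\mathrm{I},\, e_{nn} + J + \mathbb{F}\mathrm{I}\}$. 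By Lemma~\ref{maindivision}, this image has homogeneous basis $\{e_{11} - e_{nn},\, e_{11} + e_{nn}\}$ modulo $J + \mathbb{F}\mathrm{I}$ of degrees $1$ and $g$ with $g^2 = 1$; pulling back through the graded injection gives the claimed homogeneity of the images of $e_{11} \pm e_{nn}$ in $\mathrm{UT}_n^{(-)}/((D_1 \cap J) \oplus \mathbb{F}\mathrm{I})$, with the stated degrees. The main obstacle is the filtration computation in Stage~1: one must carefully track how the perturbation $\mathrm{ad}(r)$ interacts with the $J$-depth filtration and the $D_1 \cap J$--$R$ splitting.
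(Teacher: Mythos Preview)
Your proof is correct. Stages~1 and~3 are essentially the paper's argument with cosmetic repackaging: the paper shows $(\mathrm{ad}(e_{11}-e_{nn}+r))^n(\mathrm{UT}_n^{(-)}) = D_1\cap J$ via exactly your filtration (your $F^m$ coincides with the paper's $(D_1\cap J)+J^m$), and then transports the degrees of $e_{11}\pm e_{nn}$ from the quotient by $J+\mathbb{F}\mathrm{I}$ to the finer quotient by $(D_1\cap J)+\mathbb{F}\mathrm{I}$. You phrase Stage~1 in terms of generalised eigenspaces rather than the image of a high power of $\mathrm{ad}(y_1)$, but since the sum of generalised eigenspaces for nonzero eigenvalues equals that image, the content is identical.

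Stage~2 is where you genuinely diverge. You characterise $D_1+\mathbb{F}\mathrm{I}$ intrinsically as the preimage of $\mathfrak{z}\bigl(\mathrm{UT}_n^{(-)}/(D_1\cap J)\bigr)$, which makes gradedness immediate once $D_1\cap J$ is known to be graded. The paper does not compute that centre; instead it recalls that $\mathcal{T}':=\mathcal{T}+J=\mathrm{Span}\{e_{11},e_{nn}\}\oplus J\oplus\mathbb{F}\mathrm{I}$ is already graded (as the preimage of the $2$-dimensional subspace produced in Lemma~\ref{maindivision}) and uses the identification $\mathcal{T}'/(J+\mathbb{F}\mathrm{I})\cong (D_1+\mathbb{F}\mathrm{I})/((D_1\cap J)+\mathbb{F}\mathrm{I})$, in effect merging Stages~2 and~3 into one step. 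Your route is more self-contained and makes the gradedness of $D_1+\mathbb{F}\mathrm{I}$ explicit; the paper's is shorter because it recycles $\mathcal{T}$, at the cost of more compressed notation.
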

\begin{proof}
Note that $D_1\cap J$ is an ideal. By Lemma \ref{maindivision}, the element $e_{11}-e_{nn}+r$ is semihomogeneous for some $r\in J$. An easy induction shows that
\[
\left(\mathrm{ad}(e_{11}-e_{nn}+r)\right)^m(\mathrm{UT}_n^{(-)})\subseteq (D_1\cap J)+J^m,\quad m>0.
\]
Thus $\left(\mathrm{ad}(e_{11}-e_{nn}+r)\right)^n(\mathrm{UT}_n^{(-)})\subseteq D_1\cap J$. It is not hard to prove the reverse inclusion. Since $e_{11}-e_{nn}+r$ is a homogeneous element of degree $1$, we obtain that $D_1\cap J$ is a graded ideal. By Lemma \ref{maindivision}, the subspace $\mathcal{T}':=\mathcal{T}+J$ ($=\mathrm{Span}\{e_{11},e_{nn}\}\oplus J\oplus\mathbb{F}I$) is graded. Also,
\[
\mathcal{T}'/(J+\mathbb{F}\mathrm{I})\cong\mathcal{T}'\cap D_1/((D_1\cap J)+\mathbb{F}\mathrm{I})=D_1/((D_1\cap J)+\mathbb{F}\mathrm{I}).
\]
Thus, the image of $\mathrm{Span}\{e_{11},e_{nn}\}$ in $D_1/((D_1\cap J) +\mathbb{F}\mathrm{I})$ is graded, and of the same kind as in Lemma \ref{maindivision}. This proves the statement.
\end{proof}

As a consequence of the previous lemma, there exist $r$, $s\in J\cap D_1$ such that $e_{11}-e_{nn}+r$ and $e_{11}+e_{nn}+s$ are semihomogeneous. Moreover, $\deg (e_{11}-e_{nn}+r)=\left(\deg e_{11}+e_{nn}+s\right)^2=1$.

\begin{Lemma}
We may choose $r$ and $s$ such that $[e_{11}-e_{nn}+r,e_{11}+e_{nn}+s]=0$.
\end{Lemma}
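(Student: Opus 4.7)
\medskip

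\textbf{Plan of proof.} Write $a:=e_{11}-e_{nn}+r$ and $b:=e_{11}+e_{nn}+s$ with $r,s\in J\cap D_1$, chosen so that $a$ is semihomogeneous of degree $1$ and $b$ is semihomogeneous of degree $g$. Since semihomogeneous elements bracket to either zero or a homogeneous element of the product degree, $[a,b]$ is either zero or homogeneous of degree $g$. From the previous lemma we already know that $J\cap D_1$ is a graded ideal of $\mathrm{UT}_n^{(-)}$, so in particular $[a,b]\in J\cap D_1$. The strategy is to correct $s$ by a well-chosen homogeneous element $c\in J\cap D_1$ of degree $g$ so that $[a,\,b-c]=0$; replacing $s$ by $s-c$ then yields the required pair.

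To produce $c$, I would study the restriction $\mathrm{ad}(a)\colon J\cap D_1\to J\cap D_1$ (which is well-defined because $J\cap D_1$ is an ideal). Writing $a=h+r$ with $h=e_{11}-e_{nn}$, a short computation shows that in the basis $\{e_{1k},e_{kn}:1<k<n\}\cup\{e_{1n}\}$ of $J\cap D_1$, the operator $\mathrm{ad}(h)$ acts diagonally with eigenvalue $1$ on the $e_{1k}$ and $e_{kn}$ and eigenvalue $2$ on $e_{1n}$, while $\mathrm{ad}(r)$ sends everything into $\mathbb{F}e_{1n}$ and annihilates $e_{1n}$. Hence $\mathrm{ad}(a)$ is represented in this basis by an upper triangular matrix with diagonal $(1,\ldots,1,2)$, so it is invertible precisely because $\mathrm{char}\,\mathbb{F}\ne 2$. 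This invertibility step, and specifically the appearance of the eigenvalue $2$ on $e_{1n}$, is the one place where characteristic $2$ would break the argument; it is the key technical point to verify.

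Given invertibility, set $c:=\mathrm{ad}(a)^{-1}[a,b]\in J\cap D_1$. Because $a$ is semihomogeneous of degree $1$, $\mathrm{ad}(a)$ preserves every graded component, so its inverse does too; therefore $c$ is homogeneous of the same degree as $[a,b]$, namely $g$ (or $c=0$, in which case we are already done). Now replace $s$ by $s':=s-c$: the element $b':=e_{11}+e_{nn}+s'$ differs from $b$ by the homogeneous element $-c$ of degree $g$, so $b'$ is still semihomogeneous of degree $g$, and $s'$ still lies in $J\cap D_1$. Finally, by construction,
\[
[a,b']=[a,b]-[a,c]=[a,b]-\mathrm{ad}(a)(c)=0,
\]
so the new $r$ (unchanged) and $s'$ satisfy all the requirements. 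The only real obstacle is verifying the invertibility of $\mathrm{ad}(a)$ on $J\cap D_1$; everything else is a direct consequence of that together with the fact, already established, that $J\cap D_1$ is a graded ideal.
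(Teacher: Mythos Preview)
Your proof is correct and takes a genuinely different route from the paper's. The paper proceeds by explicit bracket calculus: writing $x^\pm$ for the two elements, it computes $[x^-,x^+]$, $[x^+,[x^-,x^+]]$, and $[x^+,[x^+,[x^-,x^+]]]$ in coordinates, then first replaces $r$ by $r+[x^+,[x^-,x^+]]$ (a degree-$1$ correction, since $g^2=1$) to reduce the commutator to a scalar multiple $we_{1n}$, and finally replaces $s$ by $s-\tfrac12 w e_{1n}$ to kill that remainder. Your argument instead observes once and for all that $\mathrm{ad}(a)$ restricted to the graded ideal $J\cap D_1$ is upper triangular with diagonal $(1,\ldots,1,2)$, hence invertible (here is where $\mathrm{char}\,\mathbb{F}\ne2$ enters, paralleling the paper's division by $2$), and then solves $[a,b-c]=0$ in one step by setting $c=\mathrm{ad}(a)^{-1}[a,b]$; the graded nature of $\mathrm{ad}(a)$ guarantees $c$ is homogeneous of degree $g$, so $b-c$ remains semihomogeneous. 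Your approach is shorter and more conceptual, and has the small bonus that it shows one need only adjust $s$ (leaving $r$ untouched), whereas the paper adjusts both. The paper's hands-on computation, on the other hand, makes the structure of the correction terms fully explicit, which is sometimes useful when one later needs to track the conjugating matrix.
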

\begin{proof}
For simplicity, let $x^-=e_{11}-e_{nn}+r$ and $x^+=e_{11}+e_{nn}+s$. Denote $s_0=s-(s)_{1n}e_{1n}$, $r_0=r-(r)_{1n}e_{1n}$, and $r_0^-=[e_{11}+e_{nn},r_0]$ (note that $r_0^-$ is $r_0$ where the rightmost column has inverted sign), $s_0^-=[e_{11}+e_{nn},s_0]$. Then we have
\begin{align*}
[x^-,x^+]&=s+(s)_{1n}e_{1n}-r_0^-+[r,s],\\{}%
[x^+,[x^-,x^+]]&=s_0^--r_0+[r_0^-,s],\\{}%
[x^+,[x^+,[x^-,x^+]]]&=s_0-r_0^-+[s,s_0^-]-[s,r_0].
\end{align*}
Thus, since $[x^+,[x^-,x^+]]$ is homogeneous of degree $1$, we may replace $r$ by $r+[x^+,[x^-,x^+]]$. This gives
\[
[x^-+[x^+,[x^-,x^+]],x^+]=2(s)_{1n}e_{1n}-[s,s_0^-]=:we_{1n},
\]
for some $w\in\mathbb{F}$. The element $w$ is either $0$ or non-zero, and the former implies $\deg e_{1n}=\deg x^+$. Thus in the last case, we replace $s$ by $s-\frac12we_{1n}$. Hence
\[
\left[x^-+[x^+,[x^-,x^+]],x^+-\frac12we_{1n}\right]=we_{1n}-we_{1n}=0.
\]
\end{proof}

\begin{Lemma}\label{step_type2}
Every $G$-grading on $\mathrm{UT}_n^{(-)}$ is isomorphic to one where $e_{11}-e_{nn}$ and $e_{11}+e_{nn}$ are semihomogeneous, and $\deg e_{11}-e_{nn}=\left(\deg e_{11}+e_{nn}\right)^2=1$.
\end{Lemma}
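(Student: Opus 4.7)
The plan is to construct an inner automorphism $\psi$ of $\mathrm{UT}_n^{(-)}$ (i.e., conjugation by an invertible upper triangular matrix $P$) that simultaneously carries $x^-=e_{11}-e_{nn}+r$ to $e_{11}-e_{nn}$ and $x^+=e_{11}+e_{nn}+s$ to $e_{11}+e_{nn}$. Since $\psi$ is a Lie algebra automorphism, pushing the given grading forward by $\psi$ yields an isomorphic $G$-grading in which $e_{11}-e_{nn}=\psi(x^-)$ and $e_{11}+e_{nn}=\psi(x^+)$ are semihomogeneous of degrees $1$ and $g$ respectively, as required. The case $g=1$ is already settled by Lemma \ref{existence_semihomogeneous} (if $e_{11}$ and $e_{nn}$ are semihomogeneous of degree $1$, then so are $e_{11}\pm e_{nn}$), so only $g\ne 1$ (and hence $\mathrm{char}\,\mathbb{F}\ne 2$) needs treatment here.

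The construction of $\psi$ proceeds by simultaneous diagonalization. Since $r$ is strictly upper triangular, $x^-$ is upper triangular with diagonal $(1,0,\dots,0,-1)$; as $\mathrm{char}\,\mathbb{F}\ne 2$, the eigenvalues $1,0,-1$ are distinct, so $x^-$ is diagonalizable. Because $r\in D_1\cap J$ has nonzero entries only in the first row and last column, the eigenspaces admit explicit upper-triangular bases: $v_1=e_1$ for the eigenvalue $1$, $v_j=e_j-(r)_{1j}\,e_1$ for $2\le j\le n-1$ for the eigenvalue $0$, and a uniquely determined vector $v_n\in e_n+\mathrm{Span}\{e_1,\dots,e_{n-1}\}$ for the eigenvalue $-1$ (solving a triangular linear system). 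Each $v_j$ lies in $e_j+\mathrm{Span}\{e_1,\dots,e_{j-1}\}$, so the matrix $P$ with columns $v_1,\dots,v_n$ is upper triangular and invertible, and $\psi(y):=P^{-1}yP$ is an automorphism of $\mathrm{UT}_n^{(-)}$ with $\psi(x^-)=e_{11}-e_{nn}$.

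The decisive step is to show that the same basis diagonalizes $x^+$ with the correct eigenvalues ($1$ at $v_1$ and $v_n$, $0$ on the middle indices). For this one uses the commutation $[x^-,x^+]=0$ supplied by the previous lemma: expanding it and comparing coefficients in the matrix-unit basis yields the explicit relations $(s)_{1j}=(r)_{1j}$ for $2\le j\le n-1$, $(s)_{in}=-(r)_{in}$ for $2\le i\le n-1$, and $2(s)_{1n}=-[r,s]_{1n}$. Using the first two relations, a short matrix calculation gives $x^+v_1=v_1$ and $x^+v_j=0$ for $2\le j\le n-1$; the case $x^+v_n=v_n$ then follows after a cancellation in which the $e_1$-coefficient of $v_n$ (coming from the eigenvector equation for $x^-$) exactly offsets the value of $(s)_{1n}$ determined by the third relation.

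The main obstacle is this last verification that $v_n$ is an eigenvector of $x^+$ of eigenvalue $1$: the $e_1$-coefficient of $x^+v_n$ depends nontrivially on all entries of both $r$ and $s$, and its vanishing uses all three commutation relations together with the hypothesis $\mathrm{char}\,\mathbb{F}\ne 2$. Once this is in place, $P^{-1}x^+P=e_{11}+e_{nn}$, and $\psi$ furnishes the desired graded isomorphism onto a grading in which $e_{11}-e_{nn}$ is semihomogeneous of degree $1$ and $e_{11}+e_{nn}$ is semihomogeneous of degree $g$ (with $g^2=1$), completing the proof.
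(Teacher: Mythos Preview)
Your proof is correct and follows the same overall strategy as the paper: use the previous lemma to get commuting semihomogeneous elements $x^-=e_{11}-e_{nn}+r$ and $x^+=e_{11}+e_{nn}+s$, then simultaneously diagonalize them by a conjugation in $\mathrm{UT}_n$ to obtain the desired isomorphic grading.

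The execution differs in style. The paper argues abstractly: the minimal polynomials of $x^-$ and $x^+$ are $X(X-1)(X+1)$ and $X(X-1)$, hence both are diagonalizable; since they commute, they are simultaneously diagonalizable, and the conjugation yields the new grading. You instead carry out the diagonalization by hand, exploiting the special shape $r,s\in D_1\cap J$ to write down the eigenvectors $v_1,\dots,v_n$ with $v_j\in e_j+\mathrm{Span}\{e_1,\dots,e_{j-1}\}$, and then use the relations extracted from $[x^-,x^+]=0$ to verify that the same basis diagonalizes $x^+$. Your explicit construction has the merit of making transparent that the conjugating matrix $P$ is upper triangular (so that $y\mapsto P^{-1}yP$ is an automorphism of $\mathrm{UT}_n^{(-)}$), a point the paper passes over in silence; the paper's version is shorter and avoids the entry-by-entry bookkeeping. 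Either way, the key idea is the same.
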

\begin{proof}
In the notation of the previous lemma, it is straightforward to check that the minimal polynomial of $e_{11}-e_{nn}+r$ is $X(X-1)(X+1)$, and the minimal polynomial of $e_{11}+e_{11}+s$ is $X(X-1)$. Thus both elements are diagonalizable, and since they commute, they are simultaneously diagonalizable. The diagonalization gives a new $G$-grading on $\mathrm{UT}_n^{(-)}$ where $e_{11}-e_{nn}$ and $e_{11}+e_{nn}$ are semihomogeneous elements satisfying the required properties on their degrees.
\end{proof}

Recall that we denote $X_{ij}^\pm=e_{ij}\pm e_{n-j+1,n-i+1}$.
\begin{Lemma}\label{existence_semihomogeneous2}
Up to an isomorphism, we may assume the elements $X_{ii}^\pm$ semihomogeneous, for $i=1$, 2, \dots, $n$. Moreover,
\begin{align*}
\deg X_{11}^- &=\deg X_{22}^-=\cdots=\deg X_{nn}^-=1,\\
\deg X_{11}^+&=\deg X_{22}^+=\cdots=\deg X_{nn}^+=g,
\end{align*}
where $g^2=1$ (it may happen that $g=1$).
\end{Lemma}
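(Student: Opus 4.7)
The plan is to induct on $n$ via a centralizer reduction, peeling off the outer diagonal positions one pair at a time.

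For the base case $n=3$, note that $X_{22}^-=0$ is trivially semihomogeneous of degree $1$, and $X_{22}^+=2e_{22}=2(\mathrm{I}-X_{11}^+)$; since $\mathrm{I}\in\mathfrak{z}(\mathrm{UT}_3^{(-)})$ and Lemma \ref{step_type2} gives $X_{11}^+$ semihomogeneous of degree $g$, the element $X_{22}^+$ is semihomogeneous of degree $g$ as well.

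For $n\ge4$, by Lemma \ref{step_type2} we may assume $X_{11}^\pm$ are already semihomogeneous of degrees $1$ and $g$. The subspace $\mathrm{Span}\{X_{11}^-,X_{11}^+\}$ sits in two distinct homogeneous components modulo $\mathbb{F}\mathrm{I}$, so it is graded. The Proposition of Section \ref{sect23} then gives that
\[
\mathcal{C}:=\mathrm{C}_{\mathrm{UT}_n^{(-)}}(\{X_{11}^-,X_{11}^+\})=\mathrm{C}_{\mathrm{UT}_n^{(-)}}(\{e_{11},e_{nn}\})
\]
is a graded subalgebra; a direct matrix computation identifies $\mathcal{C}\cong\mathrm{UT}_{n-2}^{(-)}\oplus\mathbb{F}e_{11}\oplus\mathbb{F}e_{nn}$, with the $\mathrm{UT}_{n-2}^{(-)}$ factor corresponding to the middle $(n-2)\times(n-2)$ block on indices $2,\dots,n-1$. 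The graded central subspace $Z_0:=\mathrm{Span}\{X_{11}^-,X_{11}^+\}$ of $\mathcal{C}$ produces a $G$-graded quotient $\mathcal{C}/Z_0\cong\mathrm{UT}_{n-2}^{(-)}$. I would apply the inductive hypothesis (together with Lemmas \ref{maindivision} and \ref{step_type2}) to this smaller graded algebra to obtain an automorphism, realized by conjugation by some $M\in\mathrm{UT}_{n-2}$, through which the middle-block analogues $X_{ii}^\pm$ (for $i=2,\ldots,\lceil n/2\rceil$) become semihomogeneous with degrees $1$ and $g$ in $\mathcal{C}/Z_0$. This automorphism lifts to $\mathrm{UT}_n^{(-)}$ via conjugation by $\mathrm{diag}(1,M,1)$, which fixes $e_{11}$ and $e_{nn}$ and thus preserves the semihomogeneity of $X_{11}^\pm$ already established.

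The main obstacle I anticipate is twofold. First, one must verify that the inductive grading on $\mathcal{C}/Z_0\cong\mathrm{UT}_{n-2}^{(-)}$ is again of Type 2 with the \emph{same} group element $g$; this is enforced by commutator compatibility, for instance by observing that $[X_{11}^+,x]$ with $x$ in the middle block must again land in a prescribed graded component, forcing the middle-level dichotomy of Lemma \ref{maindivision} to reproduce $g$ rather than some other square root of $1$. Second, one must lift semihomogeneity from the quotient $\mathcal{C}/Z_0$ back to $\mathrm{UT}_n^{(-)}$: if $X_{ii}^\pm$ has degree $h\in\{1,g\}$ in $\mathcal{C}/Z_0$, then in $\mathcal{C}$ it can be written $X_{ii}^\pm=w+a\mathrm{I}+bX_{11}^-+cX_{11}^+$ with $w\in\mathcal{C}_h$ and $a,b,c\in\mathbb{F}$; substituting $X_{11}^\pm=y_\pm+\lambda_\pm\mathrm{I}$ with $y_-\in(\mathrm{UT}_n^{(-)})_1$ and $y_+\in(\mathrm{UT}_n^{(-)})_g$, the cross-term $cy_+$ (when $h=1$) or $by_-$ (when $h=g$) must vanish for the sum to be homogeneous of degree $h$, and this vanishing is forced by the uniqueness of the semihomogeneous decomposition of the specific matrix $X_{ii}^\pm$ modulo $\mathbb{F}\mathrm{I}$. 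The resulting expression is then a homogeneous element of degree $h$ plus a scalar multiple of $\mathrm{I}$, proving $X_{ii}^\pm$ is semihomogeneous in $\mathrm{UT}_n^{(-)}$ of the required degree.
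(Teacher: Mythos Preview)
Your inductive scheme via a centralizer subalgebra is close in spirit to the paper's argument (which also passes to a graded subalgebra isomorphic to $\mathrm{UT}_{n-2}^{(-)}$ plus a central piece), but two of the steps you flag as ``obstacles'' are genuinely broken as written, and one earlier step is incorrect.

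First, $Z_0=\mathrm{Span}\{X_{11}^-,X_{11}^+\}$ need not be a graded subspace of $\mathcal{C}$. The elements $X_{11}^\pm$ are only \emph{semi}homogeneous: writing $X_{11}^\pm=y_\pm+\lambda_\pm\mathrm{I}$ with $y_\pm$ homogeneous, the homogeneous parts $y_-,y_+,\mathrm{I}$ may have three pairwise distinct degrees, and then $Z_0$ fails to be graded. (The centralizer $\mathcal{C}$ \emph{is} graded, since $\mathrm{ad}\,X_{11}^\pm=\mathrm{ad}\,y_\pm$, but the Proposition you cite does not apply directly.) Consequently $\mathcal{C}/Z_0$ is not a $G$-graded algebra, and you cannot run the induction on it. The paper avoids this by intersecting further with $\mathrm{C}_{\mathrm{UT}_n^{(-)}}(J^{n-1})$, which strips out the $X_{11}^-$ direction and leaves a subalgebra $\mathcal{U}_1\cong\mathrm{UT}_{n-2}^{(-)}\oplus\mathbb{F}X_{11}^+$ whose centre is exactly $\mathbb{F}X_{11}^+ +\mathbb{F}\mathrm{I}$; one then applies the inductive step to $\mathcal{U}_1$ itself (not a quotient), obtaining that $X_{22}^\pm+\lambda^\pm X_{11}^+$ is semihomogeneous in $\mathrm{UT}_n^{(-)}$ for some scalars $\lambda^\pm$.

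Second, your mechanism for forcing $g_2=g$ cannot work: you propose using $[X_{11}^+,x]$ with $x$ in the middle block, but $X_{11}^+=e_{11}+e_{nn}$ \emph{centralizes} the entire middle block, so this bracket is identically zero and carries no information. The link between the outer degree $g$ and the inner degree $g_2$ must be established through elements \emph{outside} $\mathcal{C}$. The paper does this by acting on the graded modules $J^{n-2}/J^{n-1}$ and $([X_{11}^-,J]+J^2)/J^2$; in each case a nonzero homogeneous element is hit nontrivially both by $X_{11}^+$ and by (the inner analogue of) $X_{22}^+$, forcing $g=g_2$.

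Third, your lifting argument (``vanishing forced by uniqueness of the semihomogeneous decomposition'') is not a proof. After the induction one has $X_{22}^- = w + a\mathrm{I} + b y_- + c y_+$ with $w\in\mathcal{C}_1$, and for $X_{22}^-$ to be semihomogeneous of degree $1$ one needs $c=0$; nothing you have said forces this. The paper handles the analogous coefficient $\lambda^-$ by the same commutator trick on $J^{n-2}/J^{n-1}$: if $\lambda^-\ne 0$ one derives $g=1$, whence a linear combination with $X_{11}^+$ finishes the job.
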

\begin{proof}
If $n=3$ then Lemma \ref{step_type2} states that $X_{11}^\pm$ are semihomogeneous. Thus $X_{22}^+=e_{22}=\mathrm{I}-X_{11}^+$ is semihomogeneous as well and $\deg X_{11}^+=\deg X_{22}^+$. So we may assume $n>3$. By the previous lemma, we may assume that $X_{11}^+$ and $X_{11}^-$ are homogeneous elements and $\deg X_{11}^+=g$, where $g^2=1$. Let
\[
\mathcal{U}_1=\mathrm{C}_{\mathrm{UT}_n^{(-)}}(J^{n-1})\bigcap\left(\left(1-(1/2)\mathrm{ad}(X_{11}^-)\right)\circ\left(1-\mathrm{ad}(X_{11}^-)\right)\left(\mathrm{UT}_n^{(-)}\right)\right).
\]
Here, and in what follows, $f\circ g$ stands for the composition of the functions $f$ and $g$ (first applying $g$ and then $f$). 
Then one can check that $\mathcal{U}_1$ is a graded subalgebra, and it is  isomorphic to $\mathrm{UT}_{n-2}^{(-)}\oplus\mathbb{F}(e_{11}+e_{nn})$. Then, repeating Lemma \ref{step_type2} to $\mathcal{U}_1$ (or, using Lemma \ref{ut2} if $n=4$), we see that there exist $\lambda^+$, $\lambda^-\in\mathbb{F}$ such that $\lambda^+(e_{11}+e_{nn})+X_{22}^+$ and $\lambda^-(e_{11}+e_{nn})+X_{22}^-$ are semihomogeneous in $\mathrm{UT}_n^{(-)}$, of degree $g_2$, and $1$, respectively. Our goal is to prove that $g_2=g$ and $\lambda^-=0$. This will imply that $X_{22}^+$ is semihomogeneous of degree $g$ and $X_{22}^-$ is semihomogeneous of degree $1$. Then an induction will complete the proof of the lemma.

If $\lambda^+\ne0$, then we may pick a nonzero homogeneous element in $J^{n-2}/J^{n-1}$. Note that
\[
[X_{11}^+,x]=\frac1{\lambda^+}[\lambda^+(e_{11}+e_{nn})+X_{22}^+]\ne0,
\]
thus $g_2=g$.

If $\lambda^+=0$ then consider $\mathcal{V}:=([X_{11}^-,J]+J^2)/J^2$. Therefore, for a nonzero homogeneous $x\in\mathcal{V}$, we have
\[
[X_{11}^+,x]=-[X_{22}^+,x]\ne0,
\]
and hence, once again we get $g_2=g$.

Finally, if $\lambda^-\ne0$, let $x\in J^{n-2}/J^{n-1}$ be a nonzero homogeneous element. Then
\[
[X_{11}^+,x]=\frac1{\lambda^-}[\lambda^-(e_{11}+e_{nn})+X_{22}^-]\ne0
\]
would imply $g=\deg X_{11}^+=\deg(\lambda^-(e_{11}+e_{nn})+X_{22}^-)=1$. Thus, a linear combination of these elements shows that $X_{22}^-$ is semihomogenous of degree $1$. The proof is complete.
\end{proof}

In the previous lemma, if $g=1$ then a linear combination of $X_{ii}^+$ and $X_{ii}^-$, for each $i=1$, \dots, $n$, shows that $e_{jj}$, for $j=1$, \dots, $n$, are semihomogeneous elements in the grading. Thus, Lemma \ref{semihomogeneous_elem} applies and the grading is a practical elementary one. Alternatively, Lemma \ref{existence_semihomogeneous} could be applied, since $g=1$. On the other hand, if $g\ne1$, then the grading is a practical type 2 one:

\begin{Lemma}\label{semihomogeneous_type2}
In the notation of the previous lemma, assume that $g\ne1$ (that is, $g\ne1$ in Lemma \ref{maindivision}). Then the grading is a practical type 2 grading.
\end{Lemma}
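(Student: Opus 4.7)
The plan is to parallel Lemma~\ref{semihomogeneous_elem}: first carve out graded subspaces using the semihomogeneity of the $X_{kk}^\pm$, then extract a ``twisted'' form of the $X_{ab}^\pm$ as homogeneous elements, normalise these to the standard $X_{ab}^\pm$ by the diagonal conjugation $M$ of \eqref{type2grading2}, and finally invoke Lemma~\ref{fact}(2) to produce the practical type 2 grading.

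First I would observe that because $X_{kk}^-$ is semihomogeneous of degree $1$ and $X_{kk}^+$ of degree $g$ with $g^2=1$, both $\mathrm{ad}(X_{kk}^-)$ and $\mathrm{ad}(X_{kk}^+)^2$ are grading-preserving linear maps. A direct calculation using $[e_{ii},e_{ab}]=(\delta_{ia}-\delta_{ib})e_{ab}$ shows that every matrix unit is a simultaneous eigenvector of these operators, and a short combinatorial check of the eigenvalue profiles identifies the joint eigenspaces with the mirror-pair subspaces $\mathrm{Span}\{e_{ab},e_{n-b+1,n-a+1}\}$ (or the singleton $\mathbb{F}e_{ab}$ when $a+b=n+1$). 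Hence each such subspace is $G$-graded. On a genuine $2$-dimensional mirror pair $W$, the operator $\mathrm{ad}(X_{kk}^+)$ is a degree-$g$ map that acts as $\alpha\cdot\mathrm{diag}(1,-1)$ in the basis $\{e_{ab},e_{n-b+1,n-a+1}\}$ for an appropriate $k$ with $\alpha\ne 0$. The forced decomposition $W=W_h\oplus W_{gh}$ then satisfies $gh\ne h$ (since $g\ne 1$), and the involution $\alpha^{-1}\mathrm{ad}(X_{kk}^+)$ swaps the two graded lines. Using $\mathrm{char}\,\mathbb{F}\ne 2$ (guaranteed by Lemma~\ref{division_char2} when $g\ne 1$), solving the resulting eigenvector equation shows that the two homogeneous lines are spanned by $e_{ab}\pm\epsilon_{ab}e_{n-b+1,n-a+1}$ for some $\epsilon_{ab}\in\mathbb{F}^\times$.

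Specialising to the first superdiagonal $b=a+1$ yields precisely the homogeneous elements of \eqref{type2grading2}. I would now conjugate by the matrix $M$ appearing there; since $M$ is diagonal it centralises every diagonal matrix, so the semihomogeneity of the $X_{kk}^\pm$ survives the conjugation, while the first-superdiagonal homogeneous elements become the standard $X_{i,i+1}^\pm$. Setting $g_i:=\deg X_{i,i+1}^-$, the symmetry $X_{n-i,n-i+1}^-=-X_{i,i+1}^-$ forces $\eta=(g_1,\ldots,g_{n-1})$ to be symmetric, and by construction $\deg X_{i,i+1}^+=gg_i$. An induction using brackets such as $[X_{i,i+1}^-,X_{i+1,i+2}^-]=X_{i,i+2}^-$ and $[X_{i,i+1}^-,X_{i+1,i+2}^+]=X_{i,i+2}^+$ extends homogeneity to every $X_{ij}^\pm$ with the degrees prescribed by Definition~\ref{type2grading}. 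Hence the conjugated grading realises $\Gamma_2(g,\eta)$ modulo the central degree of $\mathrm{I}$, and Lemma~\ref{fact}(2) promotes this to a $G$-graded isomorphism with $\Gamma_2(\deg\mathrm{I},g,\eta)$, completing the proof. The most delicate step is the combinatorial verification that the simultaneous eigenspaces of the operators $\mathrm{ad}(X_{kk}^-)$ and $\mathrm{ad}(X_{kk}^+)^2$ genuinely isolate the mirror pairs rather than fusing several of them.
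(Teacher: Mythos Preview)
Your argument is correct and takes a genuinely different route from the paper. The paper isolates each $W_i=\mathrm{Span}\{X_{i,i+1}^+,X_{i,i+1}^-\}$ one at a time, exhibiting it as the image of an explicit composition of adjoint operators (with three separate formulas depending on whether $i<\lceil\frac{n-1}{2}\rceil$, or $i$ is the middle index with $n$ even, or $n$ odd). You instead diagonalise the commuting family $\{\mathrm{ad}(X_{kk}^-)\}$ simultaneously and read off all mirror pairs at once as joint eigenspaces, then use the degree-$g$ operator $\mathrm{ad}(X_{kk}^+)$ to pin down the homogeneous lines inside each pair. This is more uniform and yields the stronger fact that \emph{every} off-diagonal mirror pair is graded, at the cost of the combinatorial separation check you flag at the end. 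Two remarks on that check: first, the operators $\mathrm{ad}(X_{kk}^+)^2$ are in fact unnecessary---the profiles $k\mapsto\lambda_k^-(a,b)$ already separate distinct strictly upper-triangular mirror pairs (one verifies that the signed indicator function $\mathbf{1}_{\{a\}}-\mathbf{1}_{\{b\}}+\mathbf{1}_{\{n-b+1\}}-\mathbf{1}_{\{n-a+1\}}$ determines the unordered pair $\{(a,b),(n-b+1,n-a+1)\}$ whenever $a<b$); second, your statement that the joint eigenspaces are exactly the mirror pairs is false on the diagonal, where every $e_{aa}$ has zero profile, but this is harmless since you never use it there. Your bracket-induction to propagate homogeneity to all $X_{ij}^\pm$ has edge cases near the antidiagonal, but it is not actually needed: once the first superdiagonal and the diagonal match $\Gamma_2(g,\eta)$ modulo centre, the grading on the quotient by $\mathbb{F}\mathrm{I}$ is determined (these elements generate), so Lemma~\ref{fact}(2) applies directly.
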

\begin{proof}
It is enough to prove that, for $i=1$, 2, \dots, $\lceil\frac{n-1}2\rceil$, the subspace $W_i=\mathrm{Span}\{X_{i,i+1}^\pm\}$ is graded. For, unless $n$ is even and $i=n/2$, then $\dim W_i=2$. Since $X_{ii}^+$ is semihomogeneous of degree $g\ne1$, we obtain that a basis of $W_i$ consisting of homogeneous elements may be chosen as $e_{i,i+1}\pm\epsilon_ie_{n-i,n-i+1}$. If $n$ is even and $i=n/2$ then $\dim W_i=1$ and there is nothing to do. Thus, up to the degree of the identity matrix, this is exactly the situation as in the construction \eqref{type2grading2}. So, repeating the construction of the matrix after \eqref{type2grading2}, we obtain that the grading is a practical type 2 grading.

Now let $i$ be such that $1\le i<\lceil\frac{n-1}2\rceil$. Then
\[
\left(1-\mathrm{ad}\,X_{i+1,i+1}^ -\right)\circ\mathrm{ad}\left(X_{ii}^-\right)\circ\mathrm{ad}\left(X_{i+1,i+1}^-\right) \left(\mathrm{UT}_n^{(-)}\right)=W_i
\]
is graded. If $n=2m$, then $\lceil\frac{n-1}2\rceil=m$ and
$$
\mathrm{ad}(X_{m,m}^-)\circ\mathrm{ad}(X_{m,m}^-)(J)=\mathrm{Span}\{e_{m,m+1}\}=W_m.
$$
If $n=2m+1$, then $\lceil\frac{n-1}2\rceil=m$ and
$$
\mathrm{ad}\left(X_{m+1,m+1}^+\right)\circ\mathrm{ad}\left(X_{m,m}^-\right)\left(UT_n^{(-)}\right)=\mathrm{Span}\{e_{m,m+1},e_{m+1,m+2}\}=W_m.
$$
The proof is complete.
\end{proof}

\subsection{} Now we have the ingredients to prove our first main result:
\begin{proof}[Proof of Theorem \ref{thm1}]
If $n=2$ then we apply Lemma \ref{ut2} and Lemma \ref{fact}. If $n>2$, then we consider the element $g$ of Lemma \ref{maindivision}. If $g=1$ then  Lemma \ref{existence_semihomogeneous} and Lemma \ref{semihomogeneous_elem} give us the first statement of (1). If $g\ne1$, then Lemma \ref{semihomogeneous_type2} proves that we get the first statement of (2). Now, Lemma \ref{fact} establishes the remaining statements of (1) and (2). Finally, since either (1) or (2) holds, the commutativity of the essential support is proved in Lemmas \ref{ess_supp} and \ref{ess_supp2}. If $\mathrm{char}\,\mathbb{F}=2$, then Lemma \ref{division_char2} proves that (1) remains valid, while (2) does not.
\end{proof}

\subsection{} Given a sequence $\eta=(g_1,\ldots,g_{n-1})\in G^{n-1}$, recall that
\[
\mathrm{rev}\,\eta=(g_{n-1},\ldots,g_1).
\]
As before we denote $\eta\sim\eta'$ if $\eta=\eta'$ or $\eta=\mathrm{rev}\,\eta'$.

If $\Gamma$ is a $G$-grading on $\mathrm{UT}_n^{(-)}$, denote by $T_G\bigl(\mathrm{UT}_n^{(-)},\Gamma\bigr)$, or $T_G(\Gamma)$ or simply by $T_G(\mathrm{UT}_n^{(-)})$ (when the grading can be inferred from the context) its $T_G$-ideal of $G$-graded polynomial identities. Since the identity matrix $\mathrm{I}$ annihilates any Lie bracket, it is easy to verify the following fact:
\begin{Lemma}\label{same_id_quotient}\
\begin{enumerate}
\item Consider the induced grading on $\mathcal{U}=\mathrm{UT}_n^{(-)}/\mathfrak{z}(\mathrm{UT}_n^{(-)})$ by the practically elementary grading $\Gamma_1(t,\eta)$. Then $T_G(\mathcal{U})=T_G(\mathrm{UT}_n^{(-)},\Gamma_1(\eta))$.
\item Consider the induced grading on $\mathcal{U}=\mathrm{UT}_n^{(-)}/\mathfrak{z}(\mathrm{UT}_n^{(-)})$ by the practical type 2 grading $\Gamma_2(t,g,\eta)$. Then $T_G(\mathcal{U})=T_G(\mathrm{UT}_n^{(-)},\Gamma_2(g,\eta))$.\qed
\end{enumerate}
\end{Lemma}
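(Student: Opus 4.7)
The strategy relies on two elementary facts: in both $(\mathrm{UT}_n^{(-)},\Gamma_1(\eta))$ and $(\mathrm{UT}_n^{(-)},\Gamma_2(g,\eta))$ the center $\mathbb{F}\mathrm{I}$ is a graded ideal, and $\mathbb{F}\mathrm{I}\cap J=0$, where $J=[\mathrm{UT}_n^{(-)},\mathrm{UT}_n^{(-)}]$ is the subspace of strictly upper triangular matrices. The first step is to observe that the induced grading on $\mathcal{U}=\mathrm{UT}_n^{(-)}/\mathbb{F}\mathrm{I}$ is the same whether one starts from the practical grading $\Gamma_1(t,\eta)$ or from the non-practical $\Gamma_1(\eta)$ (and analogously for type 2): the parameter $t$ and the choice of a complement ($V$ or $\mathcal{V}$) affect only the center, which is killed in the quotient. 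This reduces (1) to the equality $T_G(\mathcal{U})=T_G(L,\Gamma_1(\eta))$ and (2) to $T_G(\mathcal{U})=T_G(L,\Gamma_2(g,\eta))$, where $L=\mathrm{UT}_n^{(-)}$ now carries a grading in which $\mathbb{F}\mathrm{I}$ is a genuine graded ideal. The inclusion $T_G(L,\Gamma_1(\eta))\subseteq T_G(\mathcal{U})$ is then automatic from the standard fact that graded quotients inherit all graded identities.

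For the reverse inclusion I would take $f\in T_G(\mathcal{U})$ and decompose it into its multihomogeneous components in the free $G$-graded Lie algebra, treating each component separately. A component of total degree at least $2$ is a sum of iterated brackets, so every graded evaluation in $L$ lies in $J$; on the other hand the evaluation projects to zero in $\mathcal{U}$, hence lies in the kernel $\mathbb{F}\mathrm{I}$; by $\mathbb{F}\mathrm{I}\cap J=0$ the evaluation vanishes, and so this component is a graded identity of $L$. A component of total degree $1$ is, up to a scalar, a single graded variable $x^h$, and being an identity of $\mathcal{U}$ is equivalent to $\mathcal{U}_h=0$. I would then verify that $\mathcal{U}_h=0$ forces $L_h=0$: the projection $L_h\to\mathcal{U}_h$ is surjective with kernel $L_h\cap\mathbb{F}\mathrm{I}$, so a discrepancy can occur only at $h$ equal to the degree of $\mathrm{I}$; at that value, the component $L_h$ strictly contains $\mathbb{F}\mathrm{I}$ (by the non-trivial complement $V$ in Part~(1), or by the non-central element $X_{11}^+$ in Part~(2) when $n\ge 3$), so $\mathcal{U}_h\ne 0$ and no such discrepancy arises. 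The remaining type 2 case with $n=2$ reduces to Part~(1) via Lemma~\ref{ut2}.

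The step I expect to be the main obstacle is the degree-$1$ support-matching case, since it is the only place where the quotient and the original algebra might plausibly disagree; the rest of the argument is a formal consequence of $\mathbb{F}\mathrm{I}\cap J=0$ together with the standard reduction to multihomogeneous identities. One must verify, for each of the two families of gradings, that the distinguished component containing $\mathrm{I}$ retains a non-zero image in the quotient.
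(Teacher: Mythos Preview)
Your proposal is correct and considerably more detailed than what the paper offers: the paper gives no proof beyond the one-line remark that ``the identity matrix $\mathrm{I}$ annihilates any Lie bracket,'' which is precisely your key observation that commutator evaluations land in $J$ while the kernel of the quotient is $\mathbb{F}\mathrm{I}$, and $\mathbb{F}\mathrm{I}\cap J=0$. Your two-step split (commutator part via $J\cap\mathbb{F}\mathrm{I}=0$, linear part via support-matching) is exactly the natural argument.

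One refinement worth noting, since the paper works over an \emph{arbitrary} field: the general fact that multihomogeneous components of a graded identity are again identities relies on a Vandermonde argument and can fail over finite fields, so you should not invoke it literally. Fortunately you do not need it. It suffices to split $f=\ell+c$ into its linear part $\ell$ and commutator part $c$; the coefficient of each variable in $\ell$ is isolated, over any field, by setting all other variables to zero (every Lie monomial of length $\geq 2$ in one variable vanishes), and then your support-matching shows $\ell\in T_G(\mathcal{U})\cap T_G(L)$. Hence $c=f-\ell\in T_G(\mathcal{U})$ as well, and your $J\cap\mathbb{F}\mathrm{I}=0$ argument applies to $c$ as a whole. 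This is only a cosmetic tightening of what you wrote. Finally, your reduction of the $n=2$ case of Part~(2) to Part~(1) via Lemma~\ref{ut2} does not actually recover the stated equality: for $\Gamma_2(g,(g_1))$ with $g_1\notin\{1,g\}$ the degree-$g$ component is exactly $\mathbb{F}\mathrm{I}$, so the support-matching fails and the statement is not literally true for $n=2$. This is harmless for the paper, which only invokes Part~(2) when $n>2$ (cf.\ Lemma~\ref{same_id2}).
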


Using \cite[Theorem 1]{HY2020}, we may repeat the proof of \cite[Corollary 10]{KY2017a}, obtaining
\begin{Lemma}\label{id_of_one_not_other}
Let $\eta=(g_1,\ldots,g_{n-1})$ and $\eta'\in G^{n-1}$ be such that $\eta\not\sim\eta'$. For each $i=1$, 2, \dots, $n-1$, let $\xi_i=[x_{2i-1}^{(1)},x_{2i}^{(g_i)}]$. Then, up to possibly switching $\eta$ and $\eta'$, there exists a permutation $\sigma\in\mathcal{S}_{n-1}$ such that the polynomial
\[
\xi_\sigma:=[\xi_{\sigma(1)},\xi_{\sigma(2)},\ldots,\xi_{\sigma(n-1)}]
\]
belongs to $T_G(\Gamma_1(\eta))\setminus T_G(\Gamma_1(\eta'))$.\qed
\end{Lemma}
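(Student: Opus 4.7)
The plan is to follow the strategy of \cite[Corollary 10]{KY2017a}, the novelty being that we invoke the characteristic-free description of the graded polynomial identities of an elementary grading given by \cite[Theorem 1]{HY2020} in place of the characteristic-zero input used in the original argument. The broad idea is to exhibit, for an appropriate choice of $\sigma$, a nontrivial matrix-unit evaluation of $\xi_\sigma$ on one of the two gradings, while proving that $\xi_\sigma$ identically vanishes on the other.

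First I would analyze matrix-unit evaluations. In any elementary grading $\Gamma_1(\mu)$, the evaluation $x_{2i-1}^{(1)} \mapsto e_{jj}$, $x_{2i}^{(g_i)} \mapsto e_{j,j+1}$ is admissible precisely when $\mu_j = g_i$, and produces $\pm e_{j,j+1}$. A left-normed bracket $\xi_\sigma$ of $n-1$ such matrix-unit outputs telescopes to a nonzero scalar multiple of $e_{1n}$ exactly when the chosen superdiagonal positions form a valid growth chain, meaning that at each step the indices accumulated so far form a contiguous range. This translates into a combinatorial matching between the reordered sequence $(g_{\sigma(1)}, \ldots, g_{\sigma(n-1)})$ and $\mu$ which, allowing the final bracket to grow in either direction, is solvable iff this reordered sequence is $\sim$-equivalent to $\mu$.

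Given $\eta \not\sim \eta'$, the plan is to choose $\sigma$ so that $(g_{\sigma(1)}, \ldots, g_{\sigma(n-1)})$ coincides with $\eta'$ or with $\mathrm{rev}\,\eta'$, thus forcing $\xi_\sigma \notin T_G(\Gamma_1(\eta'))$ via an explicit matrix-unit evaluation. By the hypothesis $\eta \not\sim \eta'$, the same reordered sequence is \emph{not} $\sim$-equivalent to $\eta$, so no matrix-unit evaluation of $\xi_\sigma$ is nonzero on $\Gamma_1(\eta)$. If, for the given orientation of the roles, no such $\sigma$ exists, one swaps $\eta$ and $\eta'$; this is exactly what the ``up to possibly switching'' clause of the statement accommodates.

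The main obstacle is to upgrade the matrix-unit analysis to a statement about \emph{arbitrary} homogeneous evaluations, ruling out that some clever nondiagonal substitution produces a nonzero value of $\xi_\sigma$ on $\Gamma_1(\eta)$. This is the technical step for which \cite[Theorem 1]{HY2020} is essential: it provides, in arbitrary characteristic, a generating set for the $T_G$-ideal of an elementary grading on $\mathrm{UT}_n^{(-)}$, and one verifies directly that the $\xi_\sigma$ constructed above lies in the consequences of these generators whenever the combinatorial matching fails. With this tool in hand, the argument of \cite[Corollary 10]{KY2017a} transfers essentially verbatim to the present setting.
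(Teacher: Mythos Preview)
Your approach is exactly the paper's: the paper's entire proof is the sentence ``Using \cite[Theorem~1]{HY2020}, we may repeat the proof of \cite[Corollary~10]{KY2017a}'', and you propose precisely that. One caution about your intuitive sketch: the step ``choose $\sigma$ so that $(g_{\sigma(1)},\ldots,g_{\sigma(n-1)})$ coincides with $\eta'$ or $\mathrm{rev}\,\eta'$'' is not always possible, since $\eta$ and $\eta'$ need not be rearrangements of one another, and swapping the roles does not rescue this case; the actual combinatorics in \cite{KY2017a} are a bit more delicate (one works with growth chains rather than exact rearrangements), but since you correctly defer the details to the cited references, the plan stands.
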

It is immediate that the automorphism $\omega$ of $\mathrm{UT}_n^{(-)}$ is a $G$-graded isomorphism between the $G$-gradings defined by $(t,\eta)$ and $(t,\mathrm{rev}\,\eta)$, for every $t\in G$. Combining this with the previous lemma, as an immediate consequence, we can state the following:
\begin{Lemma}\label{same_id}
Let $\eta$, $\eta'\in G^{n-1}$ be two sequences of pairwise commuting elements. Then $T_G(\Gamma_1(\eta))=T_G(\Gamma_1(\eta'))$ if and only if $\eta\sim\eta'$, and if and only if $\Gamma_1(\eta)\cong\Gamma_1(\eta')$.
\end{Lemma}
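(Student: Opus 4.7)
The plan is a three-way cycle of implications: $\Gamma_1(\eta)\cong\Gamma_1(\eta')\Longrightarrow T_G(\Gamma_1(\eta))=T_G(\Gamma_1(\eta'))\Longrightarrow \eta\sim\eta'\Longrightarrow \Gamma_1(\eta)\cong\Gamma_1(\eta')$. The first arrow is automatic, since any isomorphism of $G$-graded algebras preserves the $T_G$-ideal of graded identities, and no further argument is needed there.

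For the second arrow I would argue by contraposition. Assuming $\eta\not\sim\eta'$, Lemma \ref{id_of_one_not_other} directly supplies, after possibly swapping $\eta$ and $\eta'$, a permutation $\sigma\in\mathcal{S}_{n-1}$ such that the polynomial $\xi_\sigma=[\xi_{\sigma(1)},\ldots,\xi_{\sigma(n-1)}]$ lies in $T_G(\Gamma_1(\eta))\setminus T_G(\Gamma_1(\eta'))$. This single separating identity shows that the two $T_G$-ideals cannot coincide, so contraposition gives $\eta\sim\eta'$. All the genuine technical content of this direction has already been encapsulated in \ref{id_of_one_not_other}, which is the only step I expect to carry any real weight; here it is merely quoted.

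For the last arrow, if $\eta=\eta'$ there is nothing to prove. If instead $\eta=\mathrm{rev}\,\eta'$, I appeal to the observation recorded just before the statement of the lemma: the involutive automorphism $\omega$ of $\mathrm{UT}_n^{(-)}$, defined by $\omega(e_{ij})=-e_{n-j+1,n-i+1}$, is a $G$-graded isomorphism $\Gamma_1(\eta)\to\Gamma_1(\mathrm{rev}\,\eta)=\Gamma_1(\eta')$. Indeed, in $\Gamma_1(\eta)$ the generator $e_{i,i+1}$ carries degree $g_i$, and its image $-e_{n-i,n-i+1}$ has degree $g_i$ in $\Gamma_1(\mathrm{rev}\,\eta)$, because the $(n-i)$-th coordinate of $\mathrm{rev}\,\eta=(g_{n-1},\ldots,g_1)$ equals $g_i$; the matching of degrees on all remaining matrix units then follows from the multiplicative bracket formulas that describe an elementary grading. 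This closes the cycle and completes the argument.
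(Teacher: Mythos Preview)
Your proposal is correct and follows essentially the same approach as the paper: the same three implications are established, using Lemma~\ref{id_of_one_not_other} for the contrapositive direction and the automorphism $\omega$ for the implication $\eta\sim\eta'\Rightarrow\Gamma_1(\eta)\cong\Gamma_1(\eta')$. The only difference is the order in which you run the cycle and the extra explicit verification that $\omega$ matches degrees, which the paper leaves as ``immediate.''
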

\begin{proof}
As noted before, $\eta\sim\eta'$ implies $\Gamma_1(\eta)\cong\Gamma_1(\eta')$. The former one readily implies $T_G(\Gamma_1(\eta))=T_G(\Gamma_1(\eta'))$. Now, if $\eta\not\sim\eta'$, then the previous lemma implies that $T_G(\Gamma_1(\eta))\ne T_G(\Gamma_1(\eta'))$, proving all the equivalences of this lemma.
\end{proof}

Let $\eta=(g_1,\ldots,g_{n-1})$, $\eta'=(g_1',\ldots,g_{n-1}')\in G^{n-1}$, and $g\in G$, $g^2=1$. Recall that $\eta\sim_g\eta'$ denotes that 

(a) $g_i=g_i'$ or $g_i=gg_i'$ for $i=1$, \dots, $\lfloor\frac{n-1}2\rfloor$, $\lceil\frac{n-1}2\rceil+1$, \dots, $n-1$, 

(b) if $n-1$ is odd, then $g_{n/2}=g_{n/2}'$. 

In a similar fashion, we have the following:
\begin{Lemma}\label{same_id2}
If $n>2$ then no type 2 grading satisfies the same graded polynomial identities as an elementary grading. Moreover, $T_G(\Gamma_2(g,\eta))=T_G(\Gamma_2(g',\eta'))$ if and only if $g=g'$ and $\eta\sim_g\eta'$, if and only if $\Gamma_2(g,\eta)\cong\Gamma_2(g',\eta')$.
\end{Lemma}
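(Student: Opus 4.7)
The plan is to follow the template of Lemma~\ref{same_id}: first establish the chain of implications
\[
(g=g' \text{ and } \eta\sim_g\eta')\;\Rightarrow\;\Gamma_2(g,\eta)\cong\Gamma_2(g',\eta')\;\Rightarrow\;T_G(\Gamma_2(g,\eta))=T_G(\Gamma_2(g',\eta')),
\]
the first being Lemma~\ref{iso_type2_grading} and the second tautological, and then close the loop by recovering $g$ and the $\sim_g$-class of $\eta$ from the $T_G$-ideal. Simultaneously I have to rule out that the $T_G$-ideal of a type 2 grading can coincide with that of an elementary grading.

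Both the elementary/type-2 separation and the identification of $g$ rest on the following Engel-type observation. For each $h\in G$, consider the graded Lie polynomial
\[
f_h(y,x^{(h)})=[y,\underbrace{x^{(h)},x^{(h)},\ldots,x^{(h)}}_{n}].
\]
If every homogeneous element of degree $h$ lies in $J$, then $f_h$ is a graded identity, since $J$-elements are nilpotent matrices and hence $\mathrm{ad}(x^{(h)})^n=0$. In $\Gamma_1(\eta')$ this applies to every $h\neq 1$, so $f_h\in T_G(\Gamma_1(\eta'))$ for all $h\neq 1$. In $\Gamma_2(g,\eta)$ it applies to every $h\notin\{1,g\}$, but fails for $h=g$: substituting $x=X_{11}^+$ and $y=X_{12}^-$, the identities $\mathrm{ad}(X_{11}^+)(X_{12}^\mp)=X_{12}^\pm$ give $(\mathrm{ad}\,X_{11}^+)^n(X_{12}^-)\neq 0$. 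Therefore the set $\{h\neq 1:f_h\notin T_G\}$ is empty for any elementary grading and equals $\{g\}$ for $\Gamma_2(g,\eta)$. This simultaneously rules out $T_G(\Gamma_2(g,\eta))=T_G(\Gamma_1(\eta'))$ and, in the case $T_G(\Gamma_2(g,\eta))=T_G(\Gamma_2(g',\eta'))$, forces $g=g'$.

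Once $g=g'$ is fixed, Lemma~\ref{same_id_quotient} lets me pass to $\mathcal{U}=\mathrm{UT}_n^{(-)}/\mathfrak{z}$ and work with the induced gradings $\Gamma_2(g,\eta)$ and $\Gamma_2(g,\eta')$, free from central ambiguity. The recovery of $\eta$ up to $\sim_g$ is an adaptation of Lemma~\ref{id_of_one_not_other} (hence of \cite[Theorem~1]{HY2020}): I build distinguishing polynomials out of monomials $\xi_i=[x_{2i-1}^{(1)},x_{2i}^{(g_i)}]$ together with their $g$-twisted analogues $[x_{2i-1}^{(g)},x_{2i}^{(gg_i)}]$, and iterate along a suitable permutation $\sigma\in\mathcal{S}_{n-1}$ to produce elements of $T_G(\Gamma_2(g,\eta))\setminus T_G(\Gamma_2(g,\eta'))$ whenever $\eta\not\sim_g\eta'$. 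The $\sim_g$ ambiguity arises exactly because on the first super-diagonal the homogeneous lifts $e_{i,i+1}\pm\epsilon_ie_{n-i,n-i+1}$ can have either degree $g_i$ or degree $gg_i$, as already noted after~\eqref{type2grading2}.

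The main obstacle I anticipate is the careful treatment of the middle index $i=n/2$ when $n$ is even, where the palindromic constraint forces $g_{n/2}=g_{n/2}'$ without any $g$-twist freedom; and cross-checking that the Engel-type argument really does distinguish only $h=1$ and $h=g$ on the type 2 side, i.e.\ that no other degree accidentally acquires a non-nilpotent witness for some exotic $\eta$ (a straightforward but tedious inspection of $X_{ij}^\pm$).
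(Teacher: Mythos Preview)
Your proposal is correct and follows essentially the same approach as the paper: the Engel-type polynomial $\mathrm{ad}(x^{(h)})^n$ is exactly the paper's $f_{g,h}$, used both to separate type~2 from elementary gradings and to pin down $g$, and the distinguishing polynomials for $\eta\not\sim_g\eta'$ are built from the same $\xi_i$ with $g$-twists, via Lemma~\ref{id_of_one_not_other}. The paper makes the latter step more explicit by passing to $G/\langle g\rangle$ for odd $n$ and normalizing $\eta'$ to $\eta''$ with $\eta\not\sim\eta''$ for even $n$, which is precisely the careful handling of the middle index you flagged as the main obstacle.
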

\begin{proof}
Assume that $n>2$. Let $\Gamma_2(g,\eta)$ be a type 2 grading, recall that $J=[\mathrm{UT}_n^{(-)},\mathrm{UT}_n^{(-)}]$, and let $h\in\mathrm{Supp}\,J/J^2$ (that is, the support of the induced grading on the quotient $J/J^2$). Then we may choose $h$ such that the polynomial $f_{g,h}(x^g,x^h)=\mathrm{ad}(x^g)^nx^h$ is not a $G$-graded polynomial identity for $\Gamma_2(g,\eta)$.
Here the upper index $g$ in $x^g$ means $x$ is a variable of $G$-degree $g$. On the other hand, consider an elementary grading $\Gamma_1(\eta')$. Since every homogeneous element of non-trivial degree with respect to $\Gamma_1(\eta')$ belongs to the nilpotent radical of $\mathrm{UT}_n^{(-)}$, we see that $f_{g,h}$ is a $G$-graded polynomial identity for $\Gamma_1(\eta')$. This proves the first part of the lemma.

If $g=g'$ and $\eta\sim_g\eta'$, then Lemma \ref{iso_type2_grading} guarantees us that $\Gamma_2(g,\eta)\cong\Gamma_2(g',\eta')$. This implies $T_G(\Gamma_2(g,\eta))=T_G(\Gamma_2(g',\eta'))$. So suppose that either $g\ne g'$ or $\eta\not\sim_g\eta'$. In the first case, since every homogeneous element of $\Gamma_2(g',\eta')$-degree $g$ is in the nilpotent radical, the polynomial $f_{g,h}$ above is a $G$-graded polynomial identity for $\Gamma_2(g',\eta')$ (but not for $\Gamma_2(g,\eta)$, as deduced in the first part of the proof). 

So assume that $g=g'$ but $\eta\not\sim_g\eta'$. Denote $\eta=(g_1,\ldots,g_{n-1})$ and $\eta'=(g_1',\ldots,g_{n-1}')$.

Now we construct a graded polynomial satisfied by one grading but not by the other. The case when $n$ is odd is easier, and we consider it first.

So assume $n$ is an odd integer. Denote by $\bar{\eta}=(g_1\langle g\rangle,\ldots,g_{n-1}\langle g\rangle)\in\left(G/\langle g\rangle\right)^{n-1}$, and use a similar notation for $\bar{\eta}'$. The condition $\eta\not\sim_g\eta'$ implies that $\bar{\eta}\not\sim\bar{\eta}'$. Thus, up to switching $\eta$ and $\eta'$, by Lemma \ref{id_of_one_not_other}, there exists a permutation $\sigma\in\mathcal{S}_{n-1}$ and a $G/\langle g\rangle$-graded polynomial $\bar{\xi}_\sigma\in T_G(\bar{\eta})\setminus T_G(\bar{\eta}')$. For each $i=1$, 2, \dots, $n-1$, let $\xi_i'=[x_{2i-1}^{(1)}+x_{2i-1}^{(g)},x_{2i}^{(g_i)}+x_{2i}^{(gg_i)}]$. Then we set
\[
\xi'_\sigma:=[\xi'_{\sigma(1)},\xi'_{\sigma(2)},\ldots,\xi'_{\sigma(n-1)}].
\]
By construction, the $G$-graded polynomial $\xi_\sigma'$ behaves like the $(G/\langle g\rangle)$-graded polynomial $\bar{\xi}_\sigma$ on the induced grading $\Gamma_2(g,\eta)$, by the projection $G\to G/\langle g\rangle$. The induced grading of $\Gamma_2(g,\eta)$, by that projection, coincides with the elementary grading $\Gamma_1(\bar{\eta})$. Therefore $\xi_\sigma'\in T_G(\Gamma_2(g,\eta))\setminus T_G(\Gamma_2(g,\eta'))$.

Now suppose $n$ is even. Then a similar argument holds, with the exception that we cannot pass to the quotient $G/\langle g\rangle$ because of the condition (b) of the definition of $\sim_g$. The argument we present here works also if $n$ is odd. First we replace $\eta'$ by a new $\eta''$, where $\eta'\sim_g\eta''$, by using the following rule: if $1\le i<n/2$, and $g_i'=gg_i$, then put $g_i''=g_i$. Otherwise, $g_i''=g_i'$, and if $n$ is even then $g_{n/2}''=g_{n/2}'$. Now let $\eta''=(g_1'',\ldots,g_{n-1}'')$ be symmetric so that $\eta''\sim_g\eta'$. Since $\eta\not\sim_g\eta''$, we have $\eta\not\sim\eta''$. Then Lemma \ref{id_of_one_not_other} implies the existence of $\xi_\sigma\in T_G(\Gamma_1(\eta))\setminus T_G(\Gamma_1(\eta'))$. For each $i=1$, 2, \dots, $n-1$, let
\[
\xi_i'=\left\{\begin{array}{ll}%
[x_{2i-1}^{(1)},x_{2i}^{(g_i)}],&\text{ if $n$ is even and $i=\frac{n}2$}\\{}%
[x_{2i-1}^{(1)},x_{2i}^{(g_i)}+x_{2i}^{(gg_i)}],&\text{ otherwise}%
\end{array}\right..
\]
The polynomial $\xi_\sigma'=[\xi_{\sigma(1)}',\ldots,\xi_{\sigma(n-1)}']$ is such that $\xi_\sigma'\in T_G(\Gamma_2(g,\eta))\setminus T_G(\Gamma_2(g,\eta'))$. The proof is complete.
\end{proof}

In this way we obtain the proof of our second main result:
\begin{proof}[Proof of Theorem \ref{thm2}] The equivalence of statements  (1) and (2) is given by Definition \ref{practical_iso}. The implication (2) $\Rightarrow$ (3) is always true. Now, assume that $T_G(\mathcal{U}_1/\mathcal{Z}_1)=T_G(\mathcal{U}_2/\mathcal{Z}_2)$. Then, combining the classification given by Theorem \ref{thm1} and Lemma \ref{same_id_quotient}, we may replace the gradings on $\mathcal{U}_1$ and $\mathcal{U}_2$ by new gradings, which are practical graded isomorphic to the original ones, and both are either elementary, or type 2 gradings. We shall conclude that these new gradings are isomorphic, so that the original ones are practical graded isomorphic. By Lemma \ref{same_id2}, we obtain that either $n=2$ or both gradings are simultaneously elementary or of type 2. If $n>2$ and both gradings are elementary, then we apply Lemma \ref{same_id}; and if both gradings are of type 2, then we apply Lemma \ref{same_id2}. If $n=2$, then Lemma \ref{ut2} says that both gradings may be assumed to be elementary. Thus, Lemma \ref{same_id} gives the isomorphism. This completes the proof of the implication $(3)\Rightarrow(1)$.
\end{proof}

\subsection{} Finally, we only need to put the pieces together in order to prove our last main result.

\begin{proof}[Proof of Theorem \ref{thm3}]
(2) Let $\Gamma$ and $\Gamma'$ be the gradings on $\mathrm{UT}_n^{(-)}/\mathbb{F}\mathrm{I}$ induced by $\Gamma_1(t,\eta)$ and $\Gamma_1(t',\eta')$, respectively. By Lemma \ref{same_id_quotient}\,(1), we have
\[
T_G(\Gamma)=T_G(\Gamma_1(\eta)),\quad\text{and}\quad T_G(\Gamma')=T_G(\Gamma_1(\eta')).
\]
Thus $T_G(\Gamma)=T_G(\Gamma')$ if and only if $T_G(\Gamma_1(\eta))=T_G(\Gamma_1(\eta'))$. Moreover, Lemma \ref{same_id} yields that $T_G(\Gamma_1(\eta))=T_G(\Gamma_1(\eta'))$ if and only if $\eta\sim\eta'$. Also, Theorem \ref{thm2} tells us that $\Gamma_1(t,\eta)$ and $\Gamma_1(t',\eta')$ are practically graded isomorphic if and only if $T_G(\Gamma)=T_G(\Gamma')$. Combining all these statements, we obtain the proof of (2).

(1) If $\Gamma_1(t,\eta)\cong\Gamma_1(t',\eta')$ then their centres coincide, so $t=t'$. Moreover, they are practically graded isomorphic, so (2) gives $\eta\sim\eta'$. Conversely, if $\eta\sim\eta'$, then (2) tells us that $\Gamma_1(t,\eta)$ and $\Gamma_1(t',\eta')$ are practically graded isomorphic. Thus, since $t=t'$, Lemma \ref{kythm} guarantees the graded isomorphism.

Statements (3) and (4) are proved in a similar way.
\end{proof}

\end{document}